\theoremstyle{plain}
\newtheorem{The}{Theorem}[section]    
\newtheorem{Lem}[The]{Lemma}
\newtheorem{Rem}[The]{Remark}
\newtheorem{Con}[The]{Conjecture}
\theoremstyle{definition}
\numberwithin{equation}{section} 
\newcommand{\Rmnum}[1]{\expandafter\@slowromancap\romannumeral #1@}
\title{The star-structure connectivity and star-substructure connectivity of hypercubes and folded hypercubes }
\author{Lina Ba and Heping Zhang\thanks{Corresponding author.}}
\date{{\small School of Mathematics and Statistics, Lanzhou University,
 Lanzhou, Gansu 730000, P.R. China}\\
{\small E-mails:\ baln19@lzu.edu.cn, zhanghp@lzu.edu.cn}}
\begin{document}

\maketitle
\begin{abstract}
As a generalization of vertex connectivity,  for connected graphs $G$ and $T$, the $T$-structure connectivity $\kappa(G, T)$ (resp. $T$-substructure connectivity $\kappa^{s}(G, T)$) of $G$ is the minimum cardinality of a set of subgraphs $F$  of $G$ that each  is isomorphic to $T$ (resp.  to a connected subgraph of $T$) so that  $G-F$ is disconnected. For $n$-dimensional hypercube $Q_{n}$, Lin et al. \cite {4} showed $\kappa(Q_{n},K_{1,1})=\kappa^{s}(Q_{n},K_{1,1})=n-1$ and $\kappa(Q_{n},K_{1,r})=\kappa^{s}(Q_{n},K_{1,r})=\lceil\frac{n}{2}\rceil$ for $2\leq r\leq 3$ and $n\geq 3$.  Sabir et al. \cite {9} obtained that $\kappa(Q_{n},K_{1,4})=\kappa^{s}(Q_{n},K_{1,4})=
    \lceil\frac{n}{2}\rceil$ for $n\geq 6$, and for $n$-dimensional folded hypercube $FQ_{n}$, $\kappa(FQ_{n},K_{1,1})=\kappa^{s}(FQ_{n},K_{1,1})=n$, $\kappa(FQ_{n},K_{1,r})=\kappa^{s}(FQ_{n},K_{1,r})=
    \lceil\frac{n+1}{2}\rceil$ with $2\leq r\leq 3$ and $n\geq 7$. They proposed an open problem of determining $K_{1,r}$-structure connectivity of  $Q_n$ and $FQ_n$ for general $r$. In this paper,
  we  obtain that  for each integer $r\geq 2$, $\kappa(Q_{n};K_{1,r})$$=\kappa^{s}(Q_{n};K_{1,r})$
 $=\lceil\frac{n}{2}\rceil$ and $\kappa(FQ_{n};K_{1,r})=\kappa^{s}(FQ_{n};K_{1,r})= \lceil\frac{n+1}{2}\rceil$ for all integers $n$ larger than  $r$ in quare scale. For $4\leq r\leq 6$, we separately confirm the above result holds for $Q_n$ in the remaining cases.
     \vskip 0.1in

    \noindent {\bf Keywords:} \ Structure connectivity;  Substructure connectivity;  Star graph; Hypercube; Folded hypercube.

    \vskip 0.1 in
\medskip
\end{abstract}
\section{Introduction}
It is well known that the topology of an interconnection network is often modeled by a connected graph. Let $G$ be a graph with vertex set $V(G)$ and edge set $E(G)$, where each vertex  represents a processor or node and every edge a communication link. For a subgraph $H$ of $G$, we use $G-H$ to denote the subgraph  $G-V(H)$. For a set $F=\{T_{1}, T_{2}, \ldots , T_{m}\}$ of  subgraphs of $G$, let $G-F= G-V(T_{1})-V(T_{2})-\ldots-V(T_{m})$. A good interconnection network should have some good performances, such as uniformity, symmetry, high fault tolerance, expansibility and small fixed vertex degree. One of the important parameters of high fault tolerance is connectivity.
A vertex-cut of a graph $G$ is a set $S\subseteq V(G)$ such that $G-S$ has more than one component. The connectivity $\kappa(G)$ of $G$ is defined as the minimum cardinality of a vertex-cut $S$ such that $G-S$ is disconnected or has only one vertex. In 1994, Fabrega et al. \cite {3} proposed $g$-extra connectivity,   providing more accurate measures for fault tolerance of  large-scale parallel processing systems.  For a connected non-complete graph $G$ and a non-negative integer $g$, a vertex cut $S$ of $G$ is an $g$-extra cut if $G-S$ is disconnected and every component of $G-S$ has more than $g$ vertices. The $g$-extra connectivity $\kappa_{g}(G)$ of $G$ is defined as the minimum cardinality of $g$-extra cut of $G$.

 In reality of network reliability and fault-tolerance, the neighbors of a faulty node might be more vulnerable. For networks and subnetworks  made into chips, when any node on the chip becomes faulty, the whole chip can be considered faulty.  To study the fault-tolerance of some structures of the network, Lin et al. \cite {4} introduced the concepts of structure connectivity and substructure connectivity of networks. Let $T$ be a connected subgraph of graph $G$. Let  $F$ be a set of subgraphs of $G$ such that every member in $F$ is isomorphic to $T$. Then $F$ is called a $T$-{\em structure-cut} of $G$ if the deletion of all members of $F$ disconnects $G$, i.e. $G-F$ is disconnected. The $T$-{\em structure connectivity} $\kappa(G,T)$ of $G$ is defined as the minimum cardinality of a $T$-structure-cut of $G$. Similarly,  a set $F'$ of subgraphs of $G$ which each is isomorphic to a connected subgraph of $T$  is called a $T$-{\em substructure-cut} if $G-F'$ is disconnected. The $T$-{\em substructure connectivity} $\kappa^{s}(G,T)$ of $G$ is defined as the minimum cardinality of a $T$-substructure-cut of $G$. Figure 1 shows an example of $T$-structure-cut and $T$-substructure-cut where $T$ is 3-cycle $C_{3}$. By definition, $\kappa^{s}(G,T)\leq \kappa(G,T)$. Note that $K_{1}$-structure connectivity  reduces to the classical vertex connectivity.

\begin{figure}[!htbp]
\begin{center}
\includegraphics[totalheight=5cm]{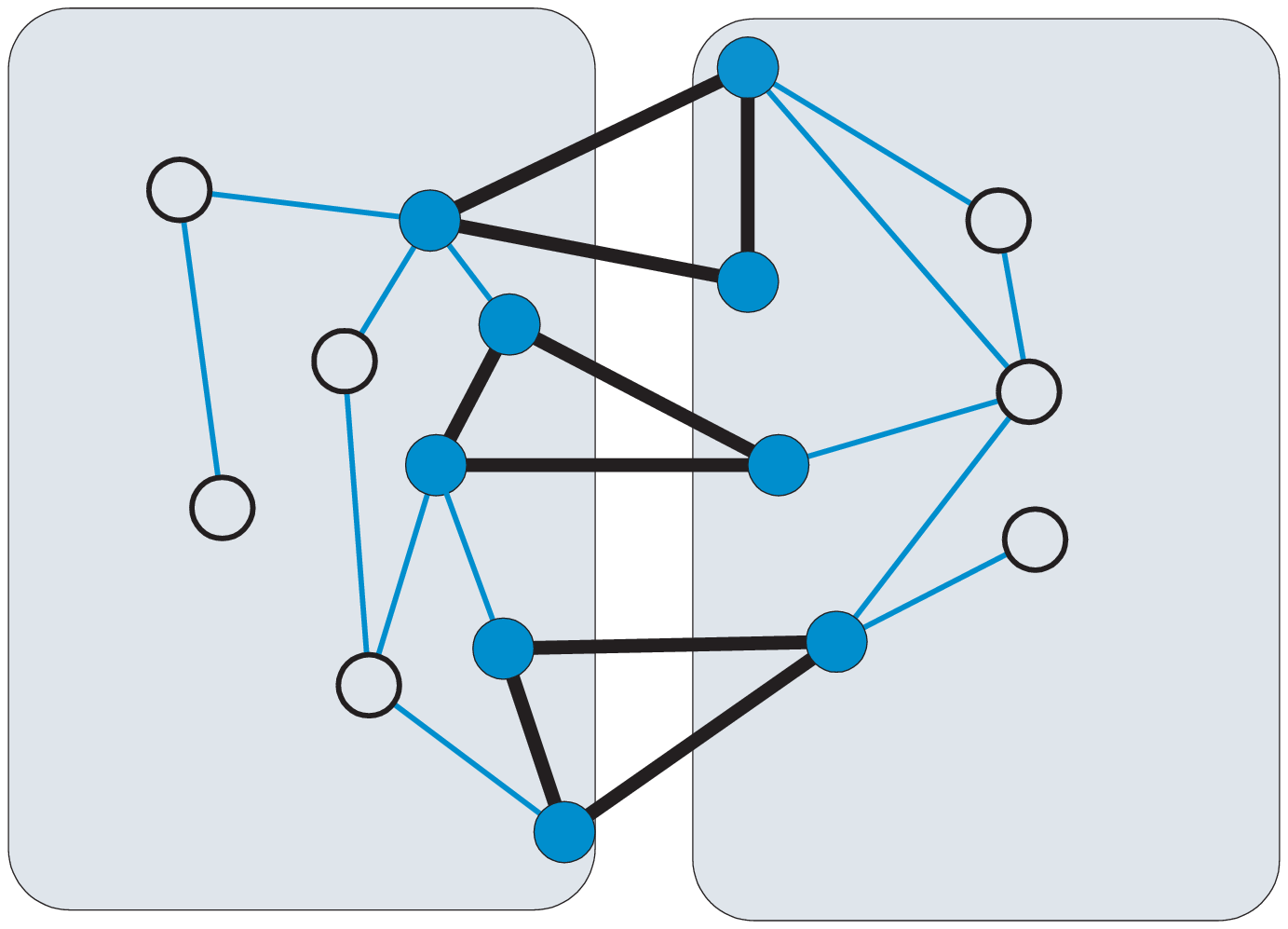}
\includegraphics[totalheight=5cm]{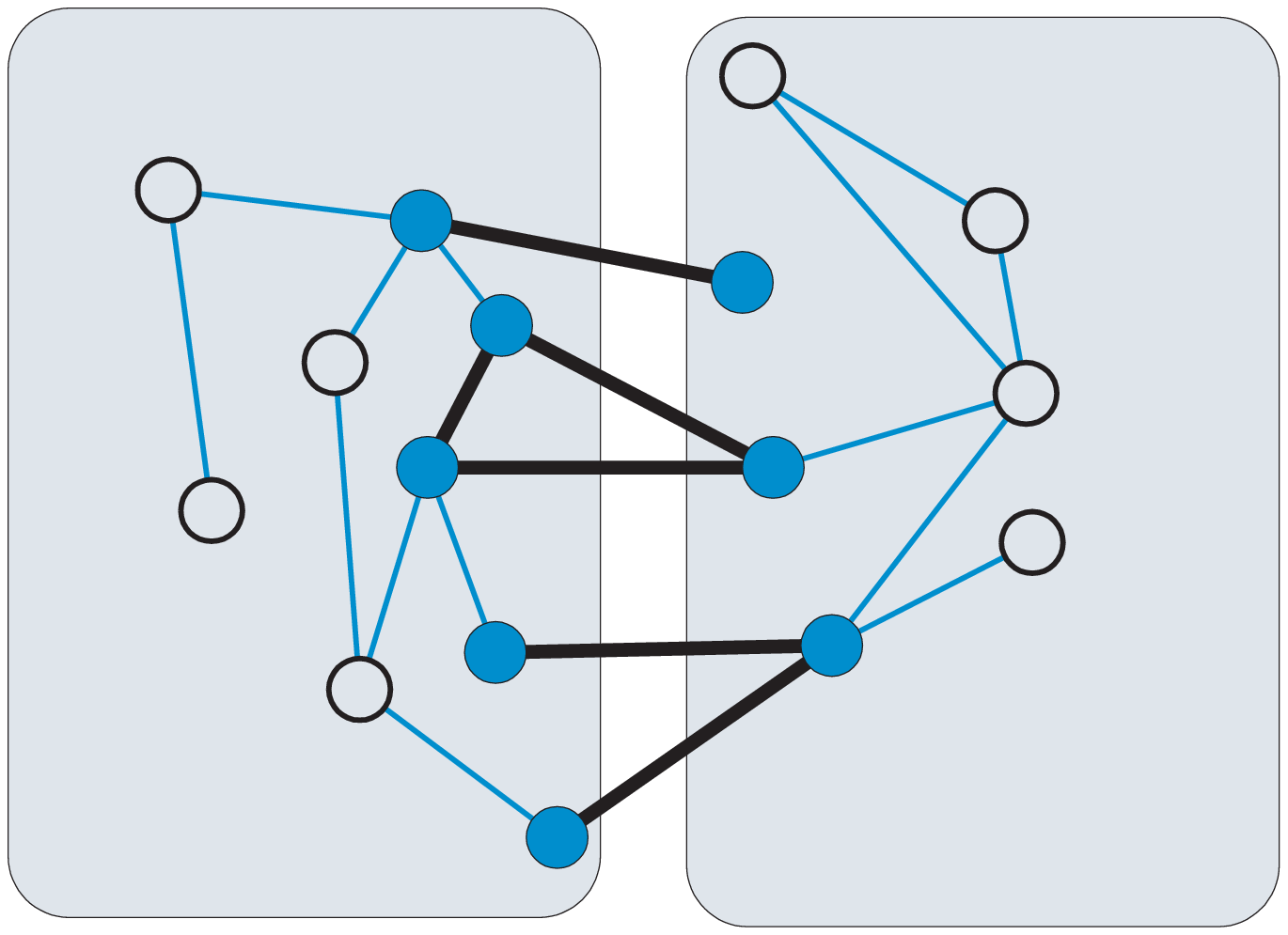}
\caption{\label{JGG }\small{ $C_{3}$-structure cut and  $C_{3}$-substructure cut.}}
\end{center}
\end{figure}

In the study of $T$-structure connectivity, much of the work has been focused on certain special structures of some given networks. Let $P_k$ denote a path with $k$ vertices, $C_k$ a cycle with $k$ vertices, and $K_{1,r}$ a star with $r\geq 1$ leaves. For the bubble-sort star graph $BS_{n}$, Zhang et al. \cite {13} obtained  $\kappa(BS_{n},T)$ and $\kappa^{s}(BS_{n},T)$ for $T\in\{P_{k},C_{2k}\}$. For $k$-ary $n$-cube network $Q_{n}^{k}$, Lv et al. \cite {7} showed $\kappa(Q_{n}^{k},K_{1,r})$ and $\kappa^{s}(Q_{n}^{k},K_{1,r})$ with $1\leq r\leq3$;  further, Lu et al. \cite {17} showed $\kappa(Q_{n}^{k},T)$ and $\kappa^{s}(Q_{n}^{k},T)$ for $T\in\{P_{k},C_{k}\}$ where $3\leq k\leq2n$;
For $n$-dimensional twisted hypercube $TQ_{n}$, Li et al. \cite {6} obtained $\kappa(TQ_{n},T)$ and $\kappa^{s}(TQ_{n},T)$ for $T\in\{K_{1,3},K_{1,4}, P_{k}\}$ where $1\leq k\leq n$.

For $n$-dimensional  hypercube  $Q_{n}$, Lin et al. \cite {4} showed
\begin{eqnarray}\label{1}
\kappa(Q_{n},K_{1,r})=\kappa^{s}(Q_{n},K_{1,r})=
\left\{ \begin{aligned}
&n-1, &\mbox{if}~~r=1,~n\geq3,\\
&\lceil\frac{n}{2}\rceil,&\mbox{if}~~2\leq r\leq3,~n\geq3.
\end{aligned}\right.
\end{eqnarray}

Moreover,
Sabir et al. \cite {9} established

\begin{eqnarray}\label{2}
 \kappa(Q_{n};K_{1,4})=\kappa^{s}(Q_{n};K_{1,4})=\lceil\frac{n}{2}\rceil, \mbox{for } n\geq 6;
\end{eqnarray}

and for $n$-dimensional folded hypercubes $FQ_{n}$, they also determined for $n\geq7$,
\begin{equation*}
\label{}
\kappa(FQ_{n},K_{1,r})=\kappa^{s}(FQ_{n},K_{1,r})=
\left\{ \begin{aligned}
&n, &\mbox{if}~~r=1,\\
&\lceil\frac{n+1}{2}\rceil, &\mbox{if}~~r=2,3.
\end{aligned}\right.
\end{equation*}

From the above results we can see that for $Q_n$ and $FQ_n$ the structure connectivity of only small stars $K_{1,r}$ $(1\leq r\leq 4)$ have been already determined. So Sabir et al. \cite {9} pointed out that determining  the $K_{1,r}$-structure connectivity and $K_{1,r}$-substructure connectivity of $Q_n$ and $FQ_n$  with general $r$ remain open. In this paper, we treat general star-structure connectivity for $n$-dimensional  hypercube  $Q_{n}$ and folded hypercubes $FQ_n$ and obtain the following results: for each integer $r\geq 2$, $\kappa(Q_{n};K_{1,r})$$=\kappa^{s}(Q_{n};K_{1,r})$
 $=\lceil\frac{n}{2}\rceil$ and $\kappa(FQ_{n};K_{1,r})=\kappa^{s}(FQ_{n};K_{1,r})= \lceil\frac{n+1}{2}\rceil$ for all integers $n$ larger than  $r$ in quare scale. To describe clearly the extent of $n$ exceeding $r$ we introduce two functions $f(r)$ and $g(r)$. For details, see  Theorems \ref{hypercube} and \ref{FQ}. Such results partly solve the open problem. For $Q_n$, from the above-mentioned results (\ref{1}), (\ref{2}) and Theorems \ref{hypercube} we find that the $K_{1,r}$-structure  and substructure connectivity of $Q_n$ for $4\leq r\leq 6$ and $n=r$ and $r+1$ have not been determined yet. So in section 4, we separately discuss such low dimensional cases and get the same result. That is, for $4\leq r\leq6$ and $n\geq r$ we have that, $\kappa(Q_{n};K_{1,r})=\kappa^{s}(Q_{n};K_{1,r})=\lceil\frac{n}{2}\rceil$.
\section{Preliminaries}
We only consider finite and simple graphs $G$.  Two vertices $u$ and $v$ of $G$ are adjacent  if they are the end-vertices of an edge. A neighbor of a vertex $x$ of $G$ means a vertex of $G$ adjacent to $x$. The neighborhood of a vertex $x$ in $G$ is the set of neighbors of $x$, denoted by $N_{G}(x)=\{y|xy\in E(G)\}$. The neighborhood of a vertex set $A$ in $G$ is denoted by $N_{G}(A)=\cup_{x\in A}N_{G}(x)-A$.
A path $P_k=v_{1}v_{2}\ldots v_{k}$ of length $k-1$ is a sequence of $k$ distinct vertices such that $v_{i-1}v_{i}\in E(G)$ for every $2\leq i\leq k$. If the end-vertices of a path $P$ of length $k\geq3$ are identified, then it becomes a cycle of length $k$, denoted by $C_k$.

An $n$-dimensional hypercube $Q_{n}$ is a simple graph on the all binary strings of length $n$, such two strings $u_{1}u_{2}\cdots u_{n}$ and $u'_{1}u'_{2}\cdots u'_{n}$,  $u_{i},u_i'\in\{0,1\}$ for $1\leq i\leq n$, are adjacent if and only if they differ in exactly one position \cite {18}, that is, $\sum_{i=1}^n|u_i-u_i'|=1$. For any vertex $u=u_{1}u_{2}u_{3}\ldots u_{n}$ in $Q_{n}$, we set $u^{i}=u_{1}^{i}u_{2}^{i}u_{3}^{i}\ldots u_{n}^{i}$ is the neighbor of $u$ in dimension $i$ of $Q_{n}$ where $u_{j}^{i}=u_{j}$ for $j\neq i$ and $u_{i}^{i}=1-u_{i}$. In general, for $A\subseteq \{1,2,\ldots, n\}$, let $u^A$ be the vertex of $Q_n$ so that $(u^A)_i=\overline{u_i}=1-u_i$ if and only if $i\in A$. Obviously, for $A,B\subseteq \{1,2,\ldots, n\}$, $u^{A}=u^{B}$ if and only if $A=B$. So $u^{i_{1},i_{2}}$ is the neighbor of $u^{i_{1}}$ in dimension $i_{2}$ and  $u^{i_{1},i_{2},i_{3}}$ is the neighbor of $u^{i_{1},i_{2}}$ in dimension $i_{3}$. We make a convention:  the  elements in $\{1,2,\ldots,n\}$ are taken arithmetic operations on module $n$.  It is known that $Q_{n}$ is a bipartite and $n$-regular graph.
\begin{figure}[!htbp]
\begin{center}
\includegraphics[totalheight=5cm]{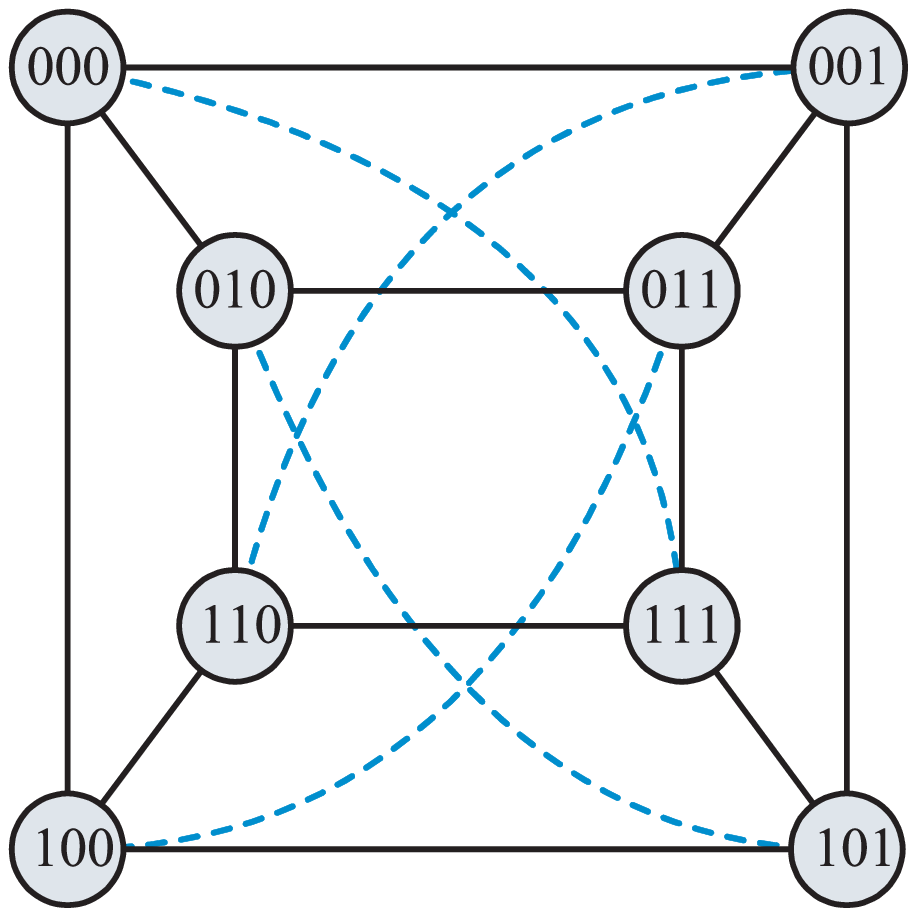}
~~~~~~~~~~~~~~~~~~~~~~\includegraphics[totalheight=5cm]{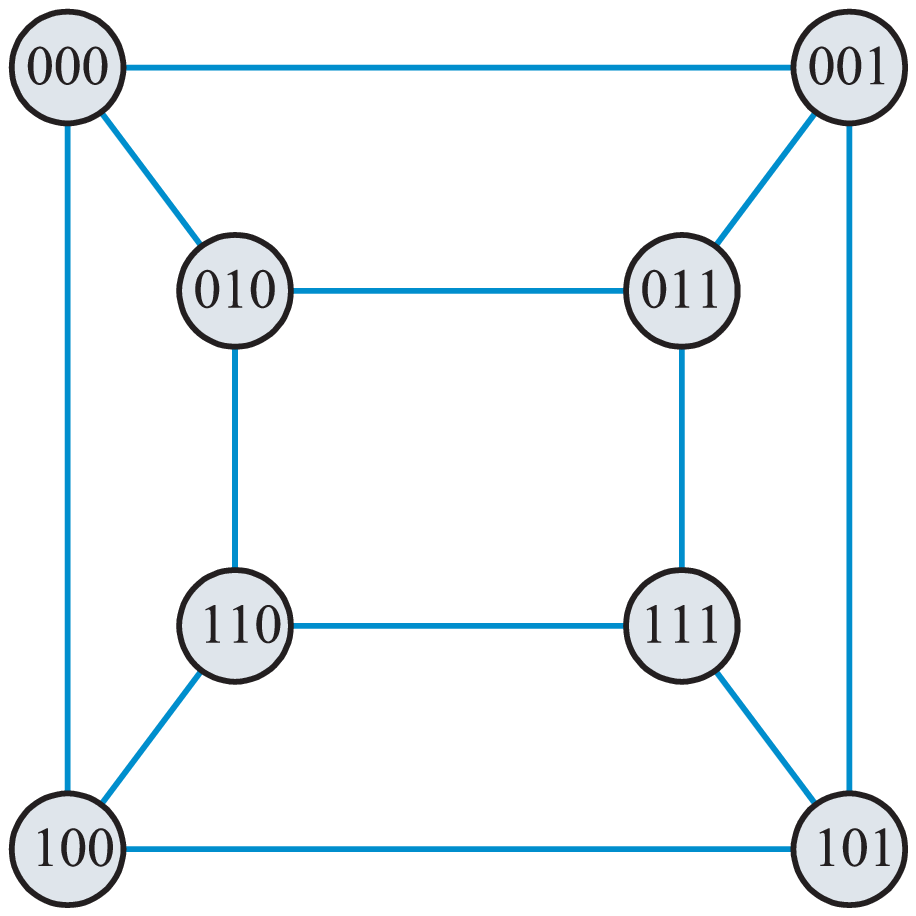}
\caption{\label{FQ3 }\small{ $FQ_{3}$ and  $Q_{3}$.}}
\end{center}
\end{figure}

As one of the popular variants of the hypercube, the $n$-dimensional folded hypercube $FQ_{n}$ proposed by El-Amawy and Latifi \cite {1}  is a graph obtained from hypercube $Q_{n}$ by adding $2^{n-1}$ edges, each of them being between vertices $u=u_{1}u_{2}u_{3}\ldots u_{n}$ and $\overline{u}=\overline{u_{1}}\overline{u_{2}}\overline{u_{3}}\ldots \overline{u_{n}}$, where $\overline{u_{i}}=1-{u_{i}}$.  $FQ_{n}$ is a highly symmetric graph as a underlying topology of several parallel systems, such as ATM Switches \cite {8}, PM21 networks \cite {5} and 3D-FolH-NOC network \cite {2}. For example, the $FQ_{3}$ and $Q_{3}$ are illustrated in Figure 2.

\section{The star-structure connectivity of hypercubes }
To determine the star-structure connectivity and star-substructure connectivity of $n$-hypercubes, we first list some preliminary results.
\begin{Lem}{\rm\cite{10}}
Any two vertices in $Q_{n}(n\geq 3)$ have exactly two common neighbors, if they have any.
\end{Lem}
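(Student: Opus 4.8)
The statement to prove is a classical structural fact: any two distinct vertices $u,v$ of $Q_n$ ($n\ge 3$) have either $0$ common neighbors or exactly $2$. My plan is to work entirely with the binary-string model and the Hamming distance $d(u,v)=\sum_{i=1}^n |u_i-v_i|$. First I would observe that a vertex $w$ is a common neighbor of $u$ and $v$ exactly when $d(u,w)=d(v,w)=1$; by the triangle inequality for Hamming distance this forces $d(u,v)\le 2$, and since $d(u,v)=1$ would make $u,v$ adjacent with $w$ completing a triangle in the triangle-free (bipartite) graph $Q_n$, the only case producing common neighbors is $d(u,v)=2$. So the whole lemma reduces to: if $u$ and $v$ differ in exactly two coordinates, say coordinates $i$ and $j$, then they have exactly two common neighbors.

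For the $d(u,v)=2$ case, write $v = u^{\{i,j\}}$. I would argue that a common neighbor $w$ must satisfy $d(u,w)=1$, so $w=u^{k}$ for some single coordinate $k$; and simultaneously $d(v,w)=1$. Computing $d(v,u^{k}) = d(u^{\{i,j\}},u^{k})$ by counting disagreeing coordinates: this distance equals $3$ if $k\notin\{i,j\}$ and equals $1$ if $k\in\{i,j\}$. Hence the common neighbors are exactly $u^{i}$ and $u^{j}$, and these are distinct because $A\mapsto u^A$ is injective (as noted in the Preliminaries, $u^A = u^B \iff A=B$). That gives exactly two common neighbors, completing the proof.

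There is essentially no hard step here — the only thing to be careful about is invoking bipartiteness (equivalently, triangle-freeness) of $Q_n$ to kill the $d(u,v)=1$ case cleanly, rather than arguing it by hand, and making sure the coordinate-counting for $d(u^{\{i,j\}},u^{k})$ is done correctly in the two subcases $k\in\{i,j\}$ and $k\notin\{i,j\}$. Since the lemma is quoted from reference~\cite{10} in the excerpt, I would in practice keep the argument to a few lines along exactly these lines, or simply cite it; the reason to spell it out is that the same distance bookkeeping (tracking which coordinates are flipped) is the computational backbone of all the later star-cut constructions in the paper, so it is worth having the reader see it once in the simplest possible setting.
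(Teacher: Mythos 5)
Your proof is correct: the reduction via the triangle inequality to the case $d(u,v)=2$, the use of bipartiteness to rule out common neighbors of adjacent vertices, and the coordinate count showing $d(u^{\{i,j\}},u^{k})\in\{1,3\}$ according to whether $k\in\{i,j\}$ are all sound, and together they give exactly the two common neighbors $u^{i}$ and $u^{j}$. The paper itself offers no proof at this point --- it simply quotes the lemma from reference \cite{10} --- and your Hamming-distance argument is the standard self-contained justification (note it does not even need the hypothesis $n\geq 3$), so there is nothing to correct.
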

The following two lemmas in the case $3\leq r\leq n$ are taken from Lemmas 2.4 and 2.5 in reference {\rm\cite {9}} respectively. We find that they also hold for $r=2$ by Lemma 3.1, since $Q_{n}$ is triangle-free.
\begin{Lem}
Let $K_{1,r}$ be a star in $Q_{n}$ with $2\leq r\leq n$. If $u$ is a vertex in $Q_{n}-K_{1,r}$, then
$|N_{Q_{n}}(u)\cap V(K_{1,r})|\leq 2$, and equality holds if and only if $u$ is adjacent two leaves of $K_{1,r}$.
\end{Lem}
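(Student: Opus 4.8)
The plan is to distinguish two cases according to whether $u$ is adjacent to the center of the star. Denote by $c$ the center of $K_{1,r}$ and by $\ell_1,\dots,\ell_r$ its leaves, so that $V(K_{1,r})=\{c,\ell_1,\dots,\ell_r\}$ with $c\ell_i\in E(Q_n)$ for each $i$; since $u\in Q_n-K_{1,r}$, the vertex $u$ differs from $c$ and from every $\ell_i$.

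First I would treat the case in which $u$ is adjacent to $c$. If $u$ were also adjacent to some leaf $\ell_i$, then $u$, $c$, $\ell_i$ would induce a triangle in $Q_n$, which is impossible since $Q_n$ is bipartite. Hence $N_{Q_n}(u)\cap V(K_{1,r})=\{c\}$ in this case, of cardinality $1\le 2$, and the value $2$ is not attained. Next I would treat the case in which $u$ is not adjacent to $c$, so that $N_{Q_n}(u)\cap V(K_{1,r})\subseteq\{\ell_1,\dots,\ell_r\}$. If $u$ were adjacent to three distinct leaves, then each of these three would be a common neighbor of the two distinct vertices $u$ and $c$, forcing $u$ and $c$ to have at least three common neighbors and contradicting Lemma 3.1 (for $n\ge 3$; the only instance left out, $n=r=2$, has just two leaves and is vacuous, or may be checked by hand). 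Therefore $u$ is adjacent to at most two leaves, and $|N_{Q_n}(u)\cap V(K_{1,r})|\le 2$.

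Combining the two cases gives $|N_{Q_n}(u)\cap V(K_{1,r})|\le 2$, and it shows that equality can hold only in the second case, that is, only when $u$ is adjacent to exactly two leaves of $K_{1,r}$; conversely, if $u$ is adjacent to two leaves of $K_{1,r}$, then $|N_{Q_n}(u)\cap V(K_{1,r})|\ge 2$, which with the upper bound forces equality, yielding the ``if and only if''. I do not expect a genuine obstacle here: the whole argument rests only on the bipartiteness (triangle-freeness) of $Q_n$ together with Lemma 3.1, and the sole points needing attention are keeping the center separate from the leaves when counting and dispatching the trivial low-dimensional instance.
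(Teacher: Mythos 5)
Your proof is correct and takes essentially the same approach the paper relies on: triangle-freeness (bipartiteness) of $Q_n$ rules out $u$ being adjacent to the center together with a leaf, and Lemma 3.1 bounds the number of leaf-neighbours by two. This is precisely the argument the paper invokes (citing \cite{9} for $3\leq r\leq n$ and noting the $r=2$ case via Lemma 3.1), and it is spelled out verbatim for the folded-hypercube analogue in Lemma 5.3; your explicit handling of the $n=r=2$ instance is a harmless extra check.
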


\begin{Lem}\label{adjacent}
Let $K_{1,r}$ be a star in $Q_{n}$ with $2\leq r\leq n$. If $u$ and $v$ are two adjacent vertices of $Q_{n}-K_{1,r}$, then $|N_{Q_{n}}({u, v})\cap V(K_{1,r})|\leq2$.
\end{Lem}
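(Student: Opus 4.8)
The plan is to combine the bipartiteness of $Q_n$ with Lemmas 3.1 and 3.2. Denote by $c$ the center of $K_{1,r}$ and by $\ell_1,\dots,\ell_r$ its leaves. Since $c$ is adjacent to every $\ell_i$, the vertex $c$ lies in one part of the bipartition of $Q_n$ and $\ell_1,\dots,\ell_r$ all lie in the other part. Because $u$ and $v$ are adjacent they lie in different parts, so after interchanging the names $u,v$ if necessary (harmless, since $N_{Q_n}(\{u,v\})$ is symmetric in $u$ and $v$) we may assume that $u$ lies in the part containing the leaves and $v$ in the part containing $c$. Then $N_{Q_n}(u)\cap V(K_{1,r})\subseteq\{c\}$ and $N_{Q_n}(v)\cap V(K_{1,r})\subseteq\{\ell_1,\dots,\ell_r\}$, so every element of $N_{Q_n}(\{u,v\})\cap V(K_{1,r})$ is either $c$, and then contributed only by $u$, or a leaf, and then contributed only by $v$.

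Next I would split into two cases. If $uc\notin E(Q_n)$, then $N_{Q_n}(u)\cap V(K_{1,r})=\emptyset$, while Lemma 3.2 applied to $v$ gives $|N_{Q_n}(v)\cap V(K_{1,r})|\le 2$, and the bound follows at once. The remaining case $uc\in E(Q_n)$ is the main obstacle: here $u$ already contributes $c$, so a naive count via Lemma 3.2 only yields $1+2=3$, and one must show that in this situation $v$ is adjacent to \emph{at most one} leaf of $K_{1,r}$. Suppose to the contrary that $v$ is adjacent to two distinct leaves $\ell_i$ and $\ell_j$. Then $c$ and $v$ would have three common neighbors, namely $u$ (as $uc,uv\in E(Q_n)$) together with $\ell_i$ and $\ell_j$ (each leaf is adjacent to $c$, and to $v$ by assumption); these three are pairwise distinct and are all distinct from $c$ and from $v$, because $\ell_i\ne\ell_j$, $u\ne v$, and $u\notin V(K_{1,r})$ while $\ell_i,\ell_j\in V(K_{1,r})$. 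This contradicts Lemma 3.1. Hence $v$ is adjacent to at most one leaf, and $|N_{Q_n}(\{u,v\})\cap V(K_{1,r})|\le 1+1=2$.

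Finally, the possibly degenerate situations cause no trouble: if $r=n$ then every neighbor of $c$ is a leaf, so no vertex of $Q_n-K_{1,r}$ can be adjacent to $c$ and only the first case occurs; and if $n$ is so small that $Q_n-K_{1,r}$ contains no edge the statement is vacuous. A purely coordinate-based alternative is also available (normalize $c=0^n$ and $\ell_i=e_i$; then $uc\in E(Q_n)$ forces $u=e_k$ with $r<k\le n$, hence $v=e_k+e_m$ for some $m\ne k$, and the only vertex of $V(K_{1,r})$ that can be adjacent to $v$ is $e_m$, which lies in $V(K_{1,r})$ precisely when $m\le r$), but I would keep the bipartiteness/Lemma 3.1 argument as the main line, since it is shorter and matches the style of the two preceding lemmas.
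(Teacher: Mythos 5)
Your proof is correct. Note, however, that the paper itself offers no proof of this lemma: it imports Lemmas 3.2 and 3.3 from Sabir et al.\ (Lemmas 2.4 and 2.5 of \cite{9}) for $3\leq r\leq n$ and merely remarks that the case $r=2$ also holds via Lemma 3.1 and triangle-freeness. So there is nothing in the text to compare against line by line; what you have supplied is a self-contained argument where the paper gives a citation. Your route --- bipartiteness to force one of $u,v$ to see only the center $c$ and the other to see only leaves, then Lemma 3.1 to show that if $uc\in E(Q_n)$ the vertex $v$ can be adjacent to at most one leaf (otherwise $c$ and $v$ would have the three distinct common neighbors $u,\ell_i,\ell_j$) --- is sound, and all the side conditions check out: $u,v\notin V(K_{1,r})$ gives the needed distinctness, Lemma 3.1 is available since $n\geq 3$ whenever $Q_n-K_{1,r}$ contains an edge, and the $n=2$, $r=2$ case is indeed vacuous. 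This is also very much in the spirit of how the paper argues elsewhere: in the proofs of Lemma 3.4 and Lemma 5.4 the same two ingredients (absence of short odd cycles and the exactly-two-common-neighbors property) are used to bound the star-neighbors of two adjacent vertices, so your argument fits the paper's toolkit; the coordinate normalization you sketch as an alternative would work equally well but is not needed.
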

Now we extend two adjacent vertices $u$ and $v$ in Lemma 3.3 to a connected subgraph $C$ in $Q_{n}-K_{1,r}$ with $|V(C)|\geq2$ as follows.
\begin{Lem}\label{connection}
Let $K_{1,r}$ be a star in $Q_{n}$ with $2\leq r\leq n$. If $C$ is a connected subgraph in $Q_{n}-K_{1,r}$ with $k=|V(C)|\geq2$, then
$|N_{Q_{n}}(C)\cap V(K_{1,r})|\leq 2(k-1)$, and  equality holds only if $C$ is a star in $Q_{n}$.
\begin{proof}Let $V(K_{1,r})=\{x,x_{1},x_{2},\ldots, x_{r}\}$ and $E(K_{1,r})=\{xx_{i}|1\leq i\leq r\}$. Then $x$ is the center of $K_{1,r}$. Let $V(C)=\{u_{1},u_{2},\ldots,u_{k}\}$.

First, we prove that $|N_{Q_{n}}(C)\cap V(K_{1,r})|\leq 2(k-1)$.
Suppose to the contrary that $|N_{Q_{n}}(C)\cap V(K_{1,r})|\geq 2(k-1)+1=2k-1$.
By Lemma $3.2$, each vertex $u_i$ in $C$ has at most 2 neighbors in $K_{1,r}$, and if $|N_{Q_{n}}(u_{i})\cap V(K_{1,r})|=2$, then $u_{i}$ is adjacent to two leaves in $K_{1,r}$, so $2k \geq|N_{Q_{n}}(C)\cap V(K_{1,r})|\geq 2k-1$. It means that there exists at least $k-1$ vertices in $C$ which each has two neighbors in $K_{1,r}$, and such neighbors are pairwise distinct. Without loss of generality, we assume
$\{u_{i}x_{2i-3},u_{i}x_{2i-2}\}\subset E(Q_{n})$ for $2\leq i\leq k$. Since $C$ is connected, $u_{1}$ is adjacent to $u_{i}$ for some $2\leq i\leq k$. If $u_{1}x_{j}\in E(Q_{n})$ with  $2k-1\leq j\leq r$, then there exists an odd cycle $u_{1}x_{j}xx_{2i-3}u_{i}u_{1}$, a contradiction. Otherwise, $u_{1}x\in E(Q_{n})$. Then
 $N_{Q_{n}}(u_{i})\cap N_{Q_{n}}(x)=\{x_{2i-3},x_{2i-2},u_{1}\}$, contradicting  Lemma 3.1. Hence
 $|N_{Q_{n}}(C)\cap V(K_{1,r})|\leq 2(k-1)$.

 Next we show that if $|N_{Q_{n}}(C)\cap V(K_{1,r})|= 2(k-1)$, then $C$ is a star in $Q_{n}$. Suppose to the contrary that $C$ is not a star in  $Q_{n}$.
 Then we have that there exists a 4-vertex path $P_{4}$ in $C$ by taking a longest path of $C$, so  $4\leq k$
 and $6\leq|N_{Q_{n}}(P_{4})\cap V(K_{1,r})|\leq8$ by Lemma 3.2. However, by Lemma \ref{adjacent},
 any two consecutive vertices in $P_{4}$ together have at most two  neighbors in $V(K_{1,r})$, which implies that $P_{4}$ has at most four  neighbors in $V(K_{1,r})$, a contradiction.
\end{proof}
\end{Lem}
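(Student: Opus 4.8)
The plan is to prove Lemma~\ref{connection} by a refinement of the counting argument already used for adjacent pairs, pushed to a connected subgraph $C$ on $k$ vertices. First I would bound $|N_{Q_n}(C)\cap V(K_{1,r})|$ from above: by Lemma~3.2 each of the $k$ vertices of $C$ contributes at most $2$ neighbours in $V(K_{1,r})$, so the trivial bound is $2k$; the work is to improve $2k$ to $2(k-1)$. For this I would argue by contradiction, assuming $|N_{Q_n}(C)\cap V(K_{1,r})|\ge 2k-1$. Since each vertex's contribution is $0$, $1$ or $2$, this forces at least $k-1$ of the vertices of $C$ to have exactly two neighbours in $K_{1,r}$, and (by double counting, since the total is at most $2k$) those neighbour-pairs must be essentially disjoint. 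By Lemma~3.2 such a vertex $u_i$ is adjacent to two \emph{leaves} of $K_{1,r}$. Fixing a spanning tree of $C$ and looking at a vertex $u_1$ adjacent in $C$ to some $u_i$ that already has two leaf-neighbours $x_{2i-3},x_{2i-2}$, I would split into cases according to whether $u_1$ is adjacent to another leaf $x_j$ or to the centre $x$: the first case produces the odd cycle $u_1 x_j x x_{2i-3} u_i u_1$ of length $5$, contradicting bipartiteness of $Q_n$; the second case gives three common neighbours $\{x_{2i-3},x_{2i-2},u_1\}$ of $u_i$ and $x$, contradicting Lemma~3.1. This yields $|N_{Q_n}(C)\cap V(K_{1,r})|\le 2(k-1)$.

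Second, for the equality characterization I would show that if $C$ is \emph{not} a star then the bound $2(k-1)$ cannot be met. The key observation is that a non-star connected graph contains a path $P_4$ on four vertices (take a longest path in $C$; if it has $\le 3$ vertices then $C$ has diameter $\le 2$ with no $P_4$, and a connected triangle-free graph with no $P_4$ is a star). Then $k\ge 4$, so meeting the bound would require $|N_{Q_n}(C)\cap V(K_{1,r})|=2(k-1)\ge 6$, and in particular the four vertices of that $P_4$ would have to account for at least $6-2(k-4)$ ... more cleanly: I would apply Lemma~\ref{adjacent} to the two edge-disjoint adjacent pairs inside $P_4$ (say $u_au_b$ and $u_cu_d$ along $P_4 = u_a u_b u_c u_d$) to get $|N_{Q_n}(\{u_a,u_b\})\cap V(K_{1,r})|\le 2$ and $|N_{Q_n}(\{u_c,u_d\})\cap V(K_{1,r})|\le 2$, hence $|N_{Q_n}(P_4)\cap V(K_{1,r})|\le 4$; combined with Lemma~3.2 applied to the remaining $k-4$ vertices, $|N_{Q_n}(C)\cap V(K_{1,r})|\le 4 + 2(k-4) = 2k-4 < 2(k-1)$, the desired contradiction.

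The main obstacle, and the point needing the most care, is the disjointness claim in the upper-bound step: from $|N_{Q_n}(C)\cap V(K_{1,r})|\ge 2k-1$ one must be sure that the $\ge k-1$ "heavy" vertices really do have pairwise disjoint pairs of leaf-neighbours, so that the odd-cycle/common-neighbour argument applies to a genuinely new vertex $u_1$ and a clean pair $\{x_{2i-3},x_{2i-2}\}$. This is where the bipartiteness of $Q_n$ (Lemma~3.1 and the triangle-freeness, hence odd-girth $\ge 5$) does the real work. A secondary subtlety is the reduction "non-star $\Rightarrow$ contains $P_4$", which uses triangle-freeness of $Q_n$; one should state it as: a connected triangle-free graph on $\ge 2$ vertices with no induced $P_4$ and no longest path of $4$ vertices must be a star — verified by noting such a graph has radius $1$.
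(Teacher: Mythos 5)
Your proposal is correct and takes essentially the same route as the paper's own proof: the same contradiction argument producing $k-1$ vertices with pairwise disjoint pairs of leaf-neighbours, then either a $5$-cycle (violating bipartiteness of $Q_n$) or three common neighbours of $u_i$ and the centre $x$ (violating Lemma 3.1), and the same $P_4$-based counting via Lemmas 3.2 and 3.3 for the equality case. The only differences are cosmetic: you organize the final count as $4+2(k-4)=2k-4<2(k-1)$ where the paper contrasts $6\leq |N_{Q_{n}}(P_{4})\cap V(K_{1,r})|$ with the upper bound $4$, and you spell out slightly more explicitly the (triangle-free) reduction from ``$C$ is not a star'' to the existence of a $P_4$.
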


Yang et al. came to the following two results in  the $g$-extra connectivity of $Q_{n}$.
\begin{Lem}{\rm\cite {12}}
Let $C$ be a subgraph of $Q_{n}$ with $|V(C)|=g+1$
 for $n\geq 4$. Then $|N_{Q_{n}}(C)|\geq (g+1)n-2g-\tbinom{g}{2}$.
\end{Lem}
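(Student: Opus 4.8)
The statement only involves the vertex set $X:=V(C)$, so I would prove: for every $n\ge 1$ and every $X\subseteq V(Q_n)$ with $|X|=g+1$, $|N_{Q_n}(X)|\ge (g+1)n-2g-\binom{g}{2}$ (the hypothesis $n\ge 4$ is only what is needed downstream). The plan is induction on $n$ via the standard bisection of $Q_n$ into two copies of $Q_{n-1}$ along a coordinate direction. The base case $n=1$ is trivial; if one prefers to start at $n=4$, note that the right-hand side is negative once $|X|\ge 8$, so only the finitely many cases $1\le|X|\le 7$ in $Q_4$ have to be checked, which can be done directly.

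For the inductive step, assume the bound in $Q_{n-1}$ and take $|X|=g+1\ge 2$ (the case $|X|=1$ says $|N_{Q_n}(X)|=n$ and is trivial). Choose a coordinate $d$ in which two vertices of $X$ differ and split $Q_n$ along $d$ into the subcubes $Q_{n-1}^{0}$ and $Q_{n-1}^{1}$ induced by $\{x:x_d=0\}$ and $\{x:x_d=1\}$, each isomorphic to $Q_{n-1}$. Put $X_0=X\cap V(Q_{n-1}^{0})$ and $X_1=X\cap V(Q_{n-1}^{1})$; by the choice of $d$, $a:=|X_0|\ge 1$, $b:=|X_1|\ge 1$ and $a+b=g+1$. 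Since $N_{Q_{n-1}^{0}}(X_0)$ and $N_{Q_{n-1}^{1}}(X_1)$ are both contained in $N_{Q_n}(X)$ and lie in the two different halves (hence are disjoint), we get $|N_{Q_n}(X)|\ge |N_{Q_{n-1}^{0}}(X_0)|+|N_{Q_{n-1}^{1}}(X_1)|$; here I simply throw away the contribution of the perfect matching between the two halves, which only helps.

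Now apply the induction hypothesis to $X_0$ with parameter $a-1$ and to $X_1$ with parameter $b-1$, using the conventions $\binom{0}{2}=\binom{1}{2}=0$ so that the formula stays correct for $a,b\in\{1,2\}$. Summing and substituting $a+b=g+1$ gives $|N_{Q_n}(X)|\ge (g+1)(n-1)-2(g-1)-\binom{a-1}{2}-\binom{b-1}{2}$, and comparing with the target $(g+1)n-2g-\binom{g}{2}$ reduces the step to the numerical inequality $\binom{g}{2}-\binom{a-1}{2}-\binom{b-1}{2}\ge g-1$. With $p:=a-1$ and $q:=b-1$, so $p+q=g-1$ and $p,q\ge 0$, a short computation gives $\binom{p+q+1}{2}-\binom{p}{2}-\binom{q}{2}=pq+(p+q)=pq+(g-1)\ge g-1$, which closes the induction.

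I would not expect the inductive step to be the obstacle: the only loss there is discarding the matching edges between the two halves, and the nonnegativity of $pq=(a-1)(b-1)$ already covers the deficit. The part that needs care is the base case (when one starts at $n=4$), where a handful of small configurations in $Q_4$ must be ruled out; alternatively, one can quote Harper's vertex-isoperimetric inequality, which says that among $(g+1)$-subsets of $Q_n$ the vertex boundary is minimized by an initial segment of the simplicial order. For $g+1\le n+1$ this initial segment is $\{\mathbf 0\}\cup\{e_1,\dots,e_g\}$, a star $K_{1,g}$, on which one computes $|N_{Q_n}|=(n-g)+\big(\binom{n}{2}-\binom{n-g}{2}\big)=(g+1)n-2g-\binom{g}{2}$, which yields the lemma and its sharpness at once. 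If one also wants the uniqueness of $K_{1,g}$ as the extremal subgraph for $g\le n$, one has to reinstate the discarded matching-edge terms and examine when equality can hold at each split, but that refinement is not needed for the statement given.
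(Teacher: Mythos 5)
Your proposal is correct, and the comparison here is necessarily one-sided: the paper offers no proof of this lemma at all — it is quoted from Yang and Meng \cite{12} — so any complete argument you give is by definition a different route. Your bisection induction is sound: choosing a coordinate in which two vertices of $X$ differ guarantees $a,b\geq 1$; the sets $N_{Q_{n-1}^{0}}(X_0)$ and $N_{Q_{n-1}^{1}}(X_1)$ lie in different halves and avoid $X$, so they are disjoint subsets of $N_{Q_n}(X)$ and discarding the matching edges is legitimate; and the arithmetic reduction to $\binom{p+q+1}{2}-\binom{p}{2}-\binom{q}{2}=pq+(p+q)\geq g-1$ with $p+q=g-1$ closes the step. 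Starting at $n=1$ also shows the hypothesis $n\geq 4$ is not needed, and your argument proves the bound for \emph{every} set size, which is genuinely useful: later in the paper (Lemma 4.7) the bound is applied to a smallest component of $Q_7$ whose order is not known in advance, so the unrestricted version is exactly what gets used. By contrast, the Harper-based shortcut you sketch only identifies the extremal star $K_{1,g}$ for $g+1\leq n+1$; since the right-hand side $(g+1)n-2g-\binom{g}{2}$ remains positive up to roughly $g\approx 2n-2$, quoting Harper alone does not yet yield the lemma for $n<g\lesssim 2n-2$ and would require computing boundaries of larger initial segments (or a separate monotonicity remark). So keep the induction as the main proof and treat the Harper computation as what it is: a verification of sharpness (the star attains equality, consistent with how the paper builds its $K_{1,r}$-structure cuts), not a complete substitute.
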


\begin{Lem}{\rm\cite {12}} For $n\geq 4$,

\[\kappa_{g}(Q_{n})=
            \begin{cases}
            (g+1)n-2g-\tbinom{g}{2},  &\text{if $0\leq g\leq n-4$}, \\
            \frac{n(n-1)}{2},   &\text{if $n-3\leq g\leq n$}. \\
            \end{cases}\]
\end {Lem}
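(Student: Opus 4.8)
The plan is to prove the two regimes of the formula by matching explicit constructions against a lower bound built on Lemma~3.5, the extremal configurations being the stars $K_{1,g}$ (when $g\le n-4$) and $K_{1,n}$ (when $n-3\le g\le n$); throughout write $x=00\cdots0$ for the all-zero vertex of $Q_n$ and use the notation $x^{i},x^{i,j}$ of Section~2.

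\emph{Upper bound.} For $0\le g\le n-4$ let $C$ be the star with center $x$ and leaves $x^{1},\dots,x^{g}$ and set $S=N_{Q_n}(C)$. There are $(g+1)n-2g$ edges from $V(C)$ to $V(Q_n)\setminus V(C)$; by Lemmas~3.1 and~3.2 each vertex of $N_{Q_n}(C)$ meets exactly one of them except the $\binom g2$ pairwise-distinct vertices $x^{i,j}$ $(1\le i<j\le g)$, each of which meets two, so $|S|=(g+1)n-2g-\binom g2$ (the cases $g\in\{0,1\}$ being immediate). Every vertex outside $V(C)\cup S$ has Hamming weight at least $2$, each such vertex is adjacent in $Q_n-S$ to a vertex of weight at least $3$, and the vertices of weight at least $3$ are mutually connected (each reaches $11\cdots1$ by flipping zero-coordinates to one), so $Q_n-S$ has exactly the two components $C$ and $V(Q_n)\setminus(V(C)\cup S)$, of orders $g+1>g$ and $2^n-(g+1)-|S|>g$ (using $n\ge4$); hence $\kappa_g(Q_n)\le(g+1)n-2g-\binom g2$. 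For $n-3\le g\le n$ the same recipe with $C=K_{1,n}$ (the ball of radius one about $x$) gives $S=\{x^{i,j}:1\le i<j\le n\}$ with $|S|=\binom n2$, and $Q_n-S$ splits into $C$ (order $n+1>g$) and the connected, nonempty set of weight-$\ge3$ vertices (order $>g$ for $n\ge4$); hence $\kappa_g(Q_n)\le\binom n2$.

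\emph{Lower bound.} Let $S$ be a $g$-extra cut of $Q_n$ and let $C$ be a smallest component of $Q_n-S$, so $N_{Q_n}(C)\subseteq S$ and $|V(C)|\ge g+1$. Put $\psi(m)=mn-2(m-1)-\binom{m-1}2$, a downward parabola in $m$ with axis $m=n-\tfrac12$; one checks $\psi(g+1)=(g+1)n-2g-\binom g2$ and $\psi(n+1)=\binom n2$. If $g+1\le|V(C)|\le n+1$, then Lemma~3.5 gives $|S|\ge|N_{Q_n}(C)|\ge\psi(|V(C)|)\ge\min\{\psi(g+1),\psi(n+1)\}$, and by the location of the axis this minimum equals $(g+1)n-2g-\binom g2$ when $g\le n-3$ and equals $\binom n2$ when $g\ge n-2$ — precisely the claimed value in each regime. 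If instead $|V(C)|\ge n+1$, then $|V(C)|\le2^{n-1}$ and the vertex-isoperimetric inequality for the hypercube (Harper's theorem, which yields $|N_{Q_n}(A)|\ge\binom n2$ whenever $n+1\le|A|\le2^{n-1}$) gives $|S|\ge\binom n2$, again at least the claimed value since $(g+1)n-2g-\binom g2\le\binom n2$ for $0\le g\le n-3$. Together with the upper bound this proves the lemma.

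\emph{Main obstacle.} The delicate point is the large-component case. Once the smallest component of $Q_n-S$ has more than about $2n$ vertices the estimate of Lemma~3.5 decays (it is quadratically negative for sets near size $2^{n-1}$), so one genuinely needs the exact value of the vertex-isoperimetric profile of $Q_n$ over the whole range $n+1\le m\le2^{n-1}$. It is precisely the non-monotonicity of that profile near the Hamming ball of radius one — a set of $n+1$ vertices can have vertex boundary only $\binom n2$, yet some sets of fewer vertices have strictly larger boundary — that is responsible for the formula's split at $g=n-3$, and establishing (or carefully importing) this isoperimetric estimate is, I expect, the technical heart of the argument.
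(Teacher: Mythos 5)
First, a point of comparison: the paper does not prove this statement at all — it is Lemma 3.6, imported verbatim from Yang and Meng \cite{12} — so your proposal is being measured against the argument in that source rather than anything internal to this paper. Your architecture is essentially the standard one (and the right one): an explicit construction giving the upper bound (the closed neighbourhood of a star $K_{1,g}$ for $g\le n-4$, of the radius-one ball for $n-3\le g\le n$), and a lower bound obtained by splitting on the size $m$ of a smallest component, using the boundary estimate $|N_{Q_n}(C)|\ge \psi(m)=mn-2(m-1)-\binom{m-1}{2}$ (the paper's Lemma 3.5) together with concavity of $\psi$ on $[g+1,n+1]$. Your computations there check out: $\psi(g+1)=(g+1)n-2g-\binom{g}{2}$, $\psi(n-2)=\psi(n+1)=\binom{n}{2}$, the axis sits at $m=n-\tfrac12$, so the endpoint minimum switches exactly at $g=n-3$, which is where the formula changes regime; the two constructions do yield $g$-extra cuts of the stated sizes (both residual components exceed $g$ vertices for $n\ge4$).

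The genuine gap is the one you yourself flag: the case $|V(C)|\ge n+1$. You dispose of it by asserting that Harper's theorem ``yields $|N_{Q_n}(A)|\ge\binom{n}{2}$ whenever $n+1\le|A|\le2^{n-1}$.'' That inequality is true, but it is not the statement of Harper's theorem; Harper only reduces it to evaluating the vertex boundary of initial segments of the simplicial order for every size in that range, and one must then verify that the isoperimetric profile never dips below $\binom{n}{2}$ there — e.g.\ by a normalized-matching/local-LYM argument showing that below the middle layer the upper shadow of the partially filled level is at least as large as the part removed from the boundary, or by the explicit component-size analysis that Yang and Meng actually carry out in \cite{12}. Since this is precisely the step responsible for the plateau value $\tfrac{n(n-1)}{2}$ once $g\ge n-3$, asserting it is the same as assuming the hard half of the lemma; as written, your proof is complete only modulo importing that profile bound (or the corresponding lemma of \cite{12}) with a proper proof or citation. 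A minor secondary remark: in your Case 1 you apply Lemma 3.5 for all component sizes up to $n+1$; that is consistent with the unrestricted way this paper states the lemma, but if you work from the original source you should check that its stated range of validity covers every $m$ you use.
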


\begin{Lem}\label{upper}
For $n\geq r \geq 2$ and $n\geq3$, $\kappa(Q_{n};K_{1,r})\leq \lceil\frac{n}{2}\rceil$ and
$\kappa^{s}(Q_{n};K_{1,r})\leq \lceil\frac{n}{2}\rceil$.
\begin{proof}Since $\kappa^{s}(Q_{n};K_{1,r})\leq \kappa(Q_{n};K_{1,r})$, we only prove
$\kappa(Q_{n};K_{1,r})\leq \lceil\frac{n}{2}\rceil$.
 Let $u=000\cdots 0$ be a vertex in $Q_{n}$. Then $N_{Q_{n}}(u)=\{u^{i}|1\leq i\leq n\}$.

If $n\geq 3$ is odd,
let $S_{i}=\{u^{2i-1},u^{2i},u^{2i-1,2i}\}\cup \{u^{2i-1,2i,2i+j}|1\leq j\leq r-2\}$  for $1\leq i\leq\frac{n-1}{2}$, and let  $S_{\frac{n+1}{2}}=\{u^{n},u^{n,1}\}\cup\{u^{n,1,j}|2\leq j\leq r\}$ for $n>r$ and $S_{\frac{n+1}{2}}=\{u^{n},u^{n,1},u^{1}\}\cup\{u^{n,1,j}|2\leq j\leq r-1\}$ for $n=r$.
Then $S_{i}$ induces a star $K_{1,r}$ with the center $u^{2i-1,2i}$ for $1\leq i\leq\frac{n-1}{2}$ and with the center $u^{n,1}$ for $i=\frac{n+1}{2}$ respectively (see Fig. 3(right)).

Let $S=\cup_{i=1}^{\frac{n+1}{2}}S_{i}$. Then $N_{Q_{n}}(u)\subseteq S$, and $u$ is an isolated vertex of $Q_{n}-S$. If $n\geq4$, then the vertex $\overline{u}$  belongs to $Q_{n}-S$, so the $S_{i}$'s for $1\leq i\leq \frac{n+1}{2}$ form a $K_{1,r}$-structure cut of $Q_n$. If $n=3$ and $r=2$, then $S=\{100,010,110\}\cup\{001,101,111\}$,  so $S$ forms a $K_{1,2}$-structure cut of $Q_{3}$ since $u^{2,3}=011$ belongs to $Q_{3}-S$. If $n=r=3$, then  $S=\{100,010,110,111\}\cup\{001,101,100,111\}$, so $S$ forms a $K_{1,3}$-structure cut of $Q_{3}$ since $u^{2,3}=011$ belongs to $Q_{3}-S$.

If $n\geq 4$ is even,
let $S_{i}=\{u^{2i-1},u^{2i},u^{2i-1,2i}\}\cup\{u^{2i-1,2i,2i+j}|1\leq j\leq r-2\}$ for $1\leq i\leq\frac{n}{2}$. Then $S_{i}$ induces a star $K_{1,r}$ with the center $u^{2i-1,2i}$ for $1\leq i\leq\frac{n}{2}$.
 Then  $u$ is an isolated vertex in $Q_{n}-S$ and $\overline{u}$ belongs to $Q_{n}-S$, where $S=\cup_{i=1}^{\frac{n}{2}}S_{i}$. Also $S$ forms a $K_{1,r}$-structure cut of $Q_n$.
\end{proof}
\end{Lem}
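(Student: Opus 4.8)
The plan is to prove only the first inequality, $\kappa(Q_{n};K_{1,r})\leq\lceil\frac{n}{2}\rceil$, since $\kappa^{s}(Q_{n};K_{1,r})\leq\kappa(Q_{n};K_{1,r})$ holds by definition. The idea is purely constructive: exhibit a family $F$ of exactly $\lceil\frac{n}{2}\rceil$ copies of $K_{1,r}$ in $Q_{n}$ whose removal isolates a single vertex. Fix $u=00\cdots0$, so that $N_{Q_{n}}(u)=\{u^{i}:1\leq i\leq n\}$ has $n$ elements; if we can cover all of $N_{Q_{n}}(u)$ by $\lceil\frac{n}{2}\rceil$ stars $K_{1,r}$ while leaving at least one further vertex of $Q_{n}$ untouched, then $u$ becomes an isolated vertex of $Q_{n}-F$ and $Q_{n}-F$ is disconnected, which is exactly what we want.

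For the construction I would pair up the coordinates as $\{1,2\},\{3,4\},\dots$, adding a wrap-around pair $\{n,1\}$ when $n$ is odd; in either parity this gives $\lceil\frac{n}{2}\rceil$ pairs. For a pair $\{2i-1,2i\}$ the vertex $u^{2i-1,2i}$ is a common neighbour of $u^{2i-1}$ and $u^{2i}$, and since $Q_{n}$ is bipartite (triangle-free) the set $\{u^{2i-1,2i},u^{2i-1},u^{2i}\}$ induces a $K_{1,2}$ with centre $u^{2i-1,2i}$; I enlarge it to a star $S_{i}\cong K_{1,r}$ by attaching $r-2$ more neighbours of the centre obtained from flipping $r-2$ additional coordinates. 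Because $n\geq r$ the centre has enough neighbours, and distinct coordinate sets give distinct vertices (as recorded in the Preliminaries), so each $S_{i}$ is a genuine copy of $K_{1,r}$. The only delicate spot is the wrap-around pair when $n=r$ is odd: there the centre $u^{n,1}$ has exactly $n=r$ neighbours, namely $u^{n},u^{1}$ and $u^{n,1,j}$ for $2\leq j\leq n-1$, so I simply take $S_{(n+1)/2}$ to be that whole star, which still covers the two $u$-neighbours $u^{n},u^{1}$. Putting $S=\bigcup_{i}S_{i}$, every $u^{i}$ lies in some $S_{j}$, hence $N_{Q_{n}}(u)\subseteq S$; moreover every vertex appearing in any $S_{i}$ is at distance $1$, $2$ or $3$ from $u$, so for $n\geq4$ the antipodal vertex $\overline{u}$ (at distance $n$) is not in $S$ and survives in $Q_{n}-S$. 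Thus the $\lceil\frac{n}{2}\rceil$ stars $S_{i}$ form a $K_{1,r}$-structure-cut of $Q_{n}$, giving the bound.

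The main work, and essentially the only thing requiring care, is the bookkeeping that each $S_{i}$ really is a copy of $K_{1,r}$ with $r$ distinct leaves and not a smaller star, particularly in the borderline case $n=r$; this reduces to elementary coordinate arithmetic modulo $n$, and the even case is handled by the same pairing with no wrap-around needed. The "disconnectedness" half is immediate from the distance estimate together with bipartiteness, so I do not expect a real obstacle. The residual low-dimensional cases $n=3$ (with $r=2$ or $r=3$) fall outside the $\overline{u}$ argument and I would settle them by direct inspection: writing out the at most seven vertices of $S$ shows that $u^{2,3}=011\notin S$, so $Q_{3}-S$ still contains $011$ while $u$ is isolated. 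Together these constructions cover all $n\geq r\geq2$ with $n\geq3$.
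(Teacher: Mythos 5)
Your proposal is correct and follows essentially the same route as the paper: pair the coordinates (with a wrap-around pair $\{n,1\}$ for odd $n$), take $\lceil\frac{n}{2}\rceil$ stars centered at the distance-two vertices $u^{2i-1,2i}$ each containing $u^{2i-1},u^{2i}$ plus $r-2$ further neighbours of the centre, isolate $u$, and use $\overline{u}$ (for $n\geq4$) or direct inspection of $Q_3$ to certify disconnection. Your explicit distance-at-most-3 argument for why $\overline{u}\notin S$ and your whole-star choice at the wrap-around pair when $n=r$ are only cosmetic variations on the paper's construction.
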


\begin{Rem}\rm{ Obviously $Q_2$ has no $K_{1,2}$-structure cut.}\end{Rem}
\begin{Rem}\rm{For the $K_{1,r}$-structure cut $S_i$'s, $1\leq i\leq \lceil\frac{n}{2}\rceil$, in the proof of  Lemma \ref{upper},   $S_{m}\cap S_k=\emptyset$ for each pair $1\leq m<k\leq \lfloor \frac{n}{2} \rfloor$, and $S_m\cap S_{\lceil\frac{n}{2}\rceil}\not=\emptyset$ for $1\leq m<\lceil\frac{n}{2}\rceil$ if and only if $m=1$ and $n=r\geq 3$ is odd (in this case, $S_{1}\cap S_{\frac{n+1}{2}}=\{u^{1},u^{1,2,n}\}$). We now give a proof as follows.   Recall that $S_{i}=\{u^{2i-1},u^{2i},u^{2i-1,2i}\}\cup \{u^{2i-1,2i,2i+j}|1\leq j\leq r-2\}$  for $1\leq i\leq\frac{n-1}{2}$.  For   $1\leq m<k\leq\lfloor \frac{n}{2} \rfloor$,   $\{2m-1,2m\}\cap\{2k-1,2k\}=\emptyset$, and thus  $\{2m-1,2m,2m+j_{1}\}\not=\{2k-1,2k,2k+j_{2}\}$ for $1\leq j_{1},j_{2}\leq r-2$, which implies that $S_{m}\cap S_k=\emptyset$.
Next suppose that $S_m\cap S_{\lceil\frac{n}{2}\rceil}\not=\emptyset$ for $1\leq m<\lceil\frac{n}{2}\rceil$. Then $n\geq 3$ is odd. If $n>r$, then $S_{\frac{n+1}{2}}=\{u^{n},u^{n,1}\}\cup\{u^{n,1,j}|2\leq j\leq r\}$. Since $1<2m <n$, there are $ 1\leq j_{1}\leq r-2$ and $2\leq j_{2}\leq r-1$ such that $\{2m-1,2m,2m+j_{1}\}=\{n,1,j_{2}\}$, which implies that  $2m+j_{1}=n$  and $2m-1=1$.  So $m=1$,  and $n=2+j_{1}\leq r$, a contradiction.
So we may assume that  $n=r\geq3$. Then $S_{\frac{n+1}{2}}=\{u^{n},u^{n,1},u^{1}\}\cup\{u^{n,1,j}|2\leq j\leq r-1\}$.  Similarly we have that $m=1$. Conversely, if  $m=1$ and $n=r\geq 3$ is odd, then we can find that $S_{1}\cap S_{\frac{n+1}{2}}=\{u^{1},u^{1,2,n}\}$. The proof is complete. }
\end{Rem}

\begin{figure}[!htbp]
\begin{center}
\includegraphics[totalheight=6.5cm]{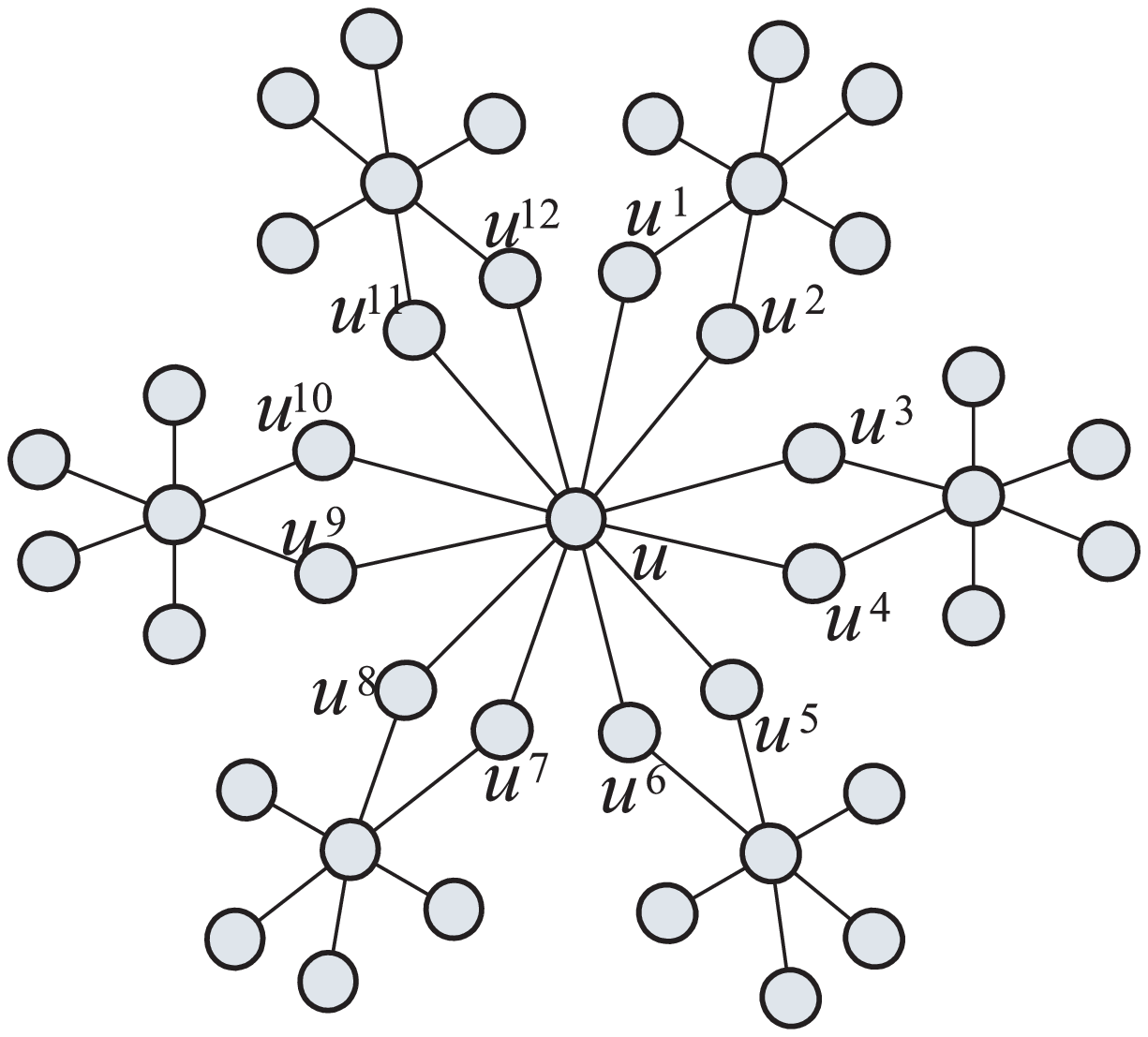}
\includegraphics[totalheight=6.5cm]{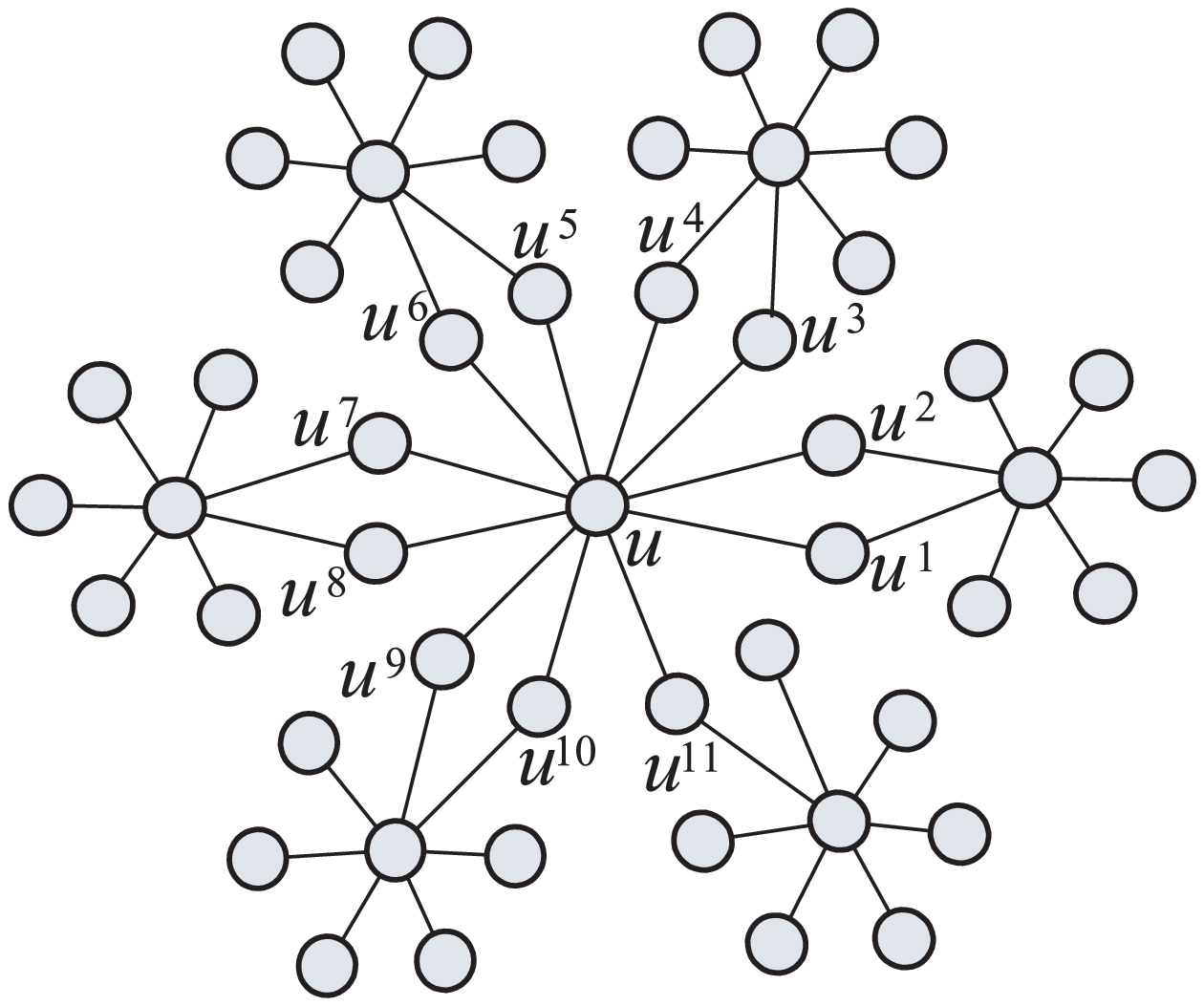}
\caption{\label{Q12 }\small{$K_{1,6}$-structure cut of $Q_{12}$ and  $K_{1,7}$-structure cut of $Q_{11}$.}}
\end{center}
\end{figure}

To describe our main result about $n$-hypercube $Q_n$, we define the  function $f(r)$ for all integers $r\geq 2$ as follows.

\begin{numcases}{f(r)=}
f_1(r)=
            {\rm max}\{\frac{r+7}{2},\frac{r^{2}+4r+3}{8}\},  &\text{if $r\geq3$ is odd}, \\
           f_2(r)= {\rm max}\{\frac{r^{2}+2r}{8},\frac{r+8}{2},\frac{r^{2}+6r+12}{12}\},   &\text{if $r\geq2$ is even}.
\end{numcases}

It is not difficult to find that $f_{1}(r)$ and $f_2(r)$ are both strictly  increasing functions for $r\geq 2$ by considering the property of  a quadratic function.  As Table \ref{tab:tab15} shows some initial values of  $f(r)$  for $2\leq  r \leq 20$, in general we can prove that $f(r)$ is an increasing function and integral except at $r=8$ in the  following lemma.
\begin{table}[h]
\centering
\caption{The values of $f(r)$ for $2\leq r \leq 20$.}\label{tab:tab15}
\begin{tabular}{|r|r|r|r|r|r|r|r|r|r|r|r|r|r|r|r|r|r|r|r|}
\hline
$r$ & $2$ & $3$ & $4$ & $5$ & $6$ & $7$ & $8$ & $9$ & $10$ & $11$ & $12$ & $13$ & $14$ & $15$ & $16$ & $17$ & $18$ & $19$ & $20$ \\
\hline
$f(r)$ & $5$ & $5$ & $6$ & $6$ & $7$ & $10$ & $\frac{31}{3}$ & $15$ & $15$ & $21$ & $21$ & $28$ & $28$ & $36$ & $36$  & $45$  & $45$ & $55$ & $55$\\
\hline
\end{tabular}
\qquad
\end{table}

\begin{Lem}
$f(r)$ is an increasing  function  for $r\geq 2$ and integral except at $r=8$, and  for odd  $r\geq 9$,
\begin{equation}\label{function1}
f(r)=f(r+1)=\frac{r^{2}+4r+3}{8}.
\end{equation}
\begin{proof}By Eq. (3.1), we know that
\[f_1(r)=
            \begin{cases}
            \frac{r+7}{2},  &\text{if $ r=3$},\\
            \frac{r^{2}+4r+3}{8},  &\text{if $r\geq5$ is odd}, \\
            \end{cases}\]
and by Eq. (3.2),
\[f_2(r)=
            \begin{cases}
            \frac{r+8}{2},  &\text{if $2\leq r\leq6$ is even},\\
            \frac{r^{2}+6r+12}{12},  &\text{if $r=8$ }, \\
            \frac{r^{2}+2r}{8},  &\text{if $r\geq10$ is even}.\\
            \end{cases}\]
So we have
$$
\aligned
f_{1}(r)=\frac{r^{2}+4r+3}{8}, ~~~~\mbox{for odd } r\geq 5, \mbox{ and}\\
f_{2}(r)=\frac{r^{2}+2r}{8}, ~~~~\mbox{for even }r\geq10.
\endaligned
$$
Further, if $r_{2}=r_{1}+1$, then
$$
\aligned
\frac{r_{1}^{2}+4r_{1}+3}{8}=\frac{r_{2}^{2}+2r_{2}}{8}.
\endaligned
$$
The above three equalities  imply that for odd $r\geq9$,  $f(r)=f(r+1)=\frac{r^{2}+4r+3}{8}$, so Eq. (\ref{function1}) holds. Together with Table \ref{tab:tab15} we know that   $f(r)$ is an  increasing function
for $r\geq 2$.

It remains to  prove that $f(r)$ is  integral for $r\geq 9$. Let $r+1=2k\geq 10$. Then $f(r)=f(r+1)=f_2(2k)=\frac{{(2k)}^{2}+2\times 2k}{8}=\frac{k(k+1)}{2}$, which is an integer.
\end{proof}
\end{Lem}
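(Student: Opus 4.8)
The plan is to reduce the two maxima in the definition of $f$ to the explicit piecewise formulas for $f_1$ and $f_2$, then read off the eventual quadratic behaviour, verify one factorization identity, and close with an integrality check and a monotonicity check.

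First I would resolve the maxima. For odd $r\geq 3$, clearing denominators gives $\frac{r^{2}+4r+3}{8}\geq\frac{r+7}{2}$ iff $r^{2}+4r+3\geq 4r+28$ iff $r^{2}\geq 25$, i.e. iff $r\geq 5$; hence $f_1(3)=5$ and $f_1(r)=\frac{r^{2}+4r+3}{8}$ for odd $r\geq 5$. For even $r\geq 2$ I would run the three pairwise comparisons among $\frac{r^{2}+2r}{8}$, $\frac{r+8}{2}$ and $\frac{r^{2}+6r+12}{12}$; each reduces to the sign of a quadratic (respectively $r^{2}-2r-32$, $r^{2}-36$ and $r^{2}-6r-24$), which shows that $\frac{r+8}{2}$ dominates for $r\in\{2,4,6\}$, $\frac{r^{2}+6r+12}{12}$ dominates for $r=8$, and $\frac{r^{2}+2r}{8}$ dominates for even $r\geq 10$. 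The only slightly delicate points are the tie at $r=6$ (where the linear term equals the third term) and $r=8$ (where the middle quadratic wins, with value $\frac{31}{3}$); these give exactly the piecewise descriptions of $f_1$ and $f_2$ used below.

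From this, $f(r)=\frac{r^{2}+4r+3}{8}$ for odd $r\geq 5$ and $f(r)=\frac{r^{2}+2r}{8}$ for even $r\geq 10$. The key identity is the factorization: for odd $r$, $r^{2}+4r+3=(r+1)(r+3)=(r+1)^{2}+2(r+1)$, so for odd $r\geq 9$ (whence $r\geq 5$ and $r+1\geq 10$) we get $f(r)=\frac{r^{2}+4r+3}{8}=\frac{(r+1)^{2}+2(r+1)}{8}=f(r+1)$, which is Eq. (\ref{function1}). For integrality, write $r=2m+1$ with odd $r\geq 9$; then $r^{2}+4r+3=(2m+2)(2m+4)=4(m+1)(m+2)$, so $f(r)=f(r+1)=\binom{m+2}{2}$, an integer, covering all $r\geq 9$. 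For $2\leq r\leq 7$ the values obtained in the first step are $5,5,6,6,7,10$, all integers, while at $r=8$ we get $\frac{31}{3}$, the unique non-integer.

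Finally, for monotonicity I would combine the fact that $f_1$ and $f_2$ are each strictly increasing (a property of quadratics, already noted before the lemma) with the observations just made: on $r\geq 9$ every odd $r$ satisfies $f(r)=f(r+1)$, and the step from an even $r\geq 10$ to $r+1$ is a strict increase since $\frac{r^{2}+2r}{8}<\frac{(r+1)^{2}+4(r+1)+3}{8}=\frac{r^{2}+6r+8}{8}$; together with the tabulated values for $2\leq r\leq 9$ this yields that $f$ is increasing on all of $r\geq 2$. I expect no real obstacle here: the whole argument is elementary, and the only care needed is the bookkeeping of the pairwise comparisons in the first step so that the boundary cases $r=6$ and $r=8$ come out exactly as claimed — in particular so that $r=8$ is genuinely the single point of non-integrality.
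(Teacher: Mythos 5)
Your proposal is correct and follows essentially the same route as the paper: resolve the maxima to the piecewise formulas $f_1(r)=\frac{r^{2}+4r+3}{8}$ (odd $r\geq5$) and $f_2(r)=\frac{r^{2}+2r}{8}$ (even $r\geq10$), use the identity $r^{2}+4r+3=(r+1)^{2}+2(r+1)$ to get $f(r)=f(r+1)$ for odd $r\geq9$, verify integrality by the factorization $\frac{k(k+1)}{2}$, and settle $2\leq r\leq 8$ and monotonicity by the explicit small values. Your added detail (the explicit pairwise comparisons and the even-to-odd increase check) merely fills in steps the paper states without proof, so there is no substantive difference.
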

\begin{Lem}\label{F1}
For all integers $r\geq 2$, $r<f(r)$.
\end{Lem}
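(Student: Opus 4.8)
The plan is to verify the strict inequality $r < f(r)$ by splitting into the same cases that define $f(r)$, since on each range $f$ coincides with an explicit quadratic or linear function. By the previous lemma and its proof, we have the closed forms $f(3)=5$, $f(r)=\frac{r+8}{2}$ for even $r$ with $2\le r\le 6$, $f(8)=\frac{r^2+6r+12}{12}\big|_{r=8}=\frac{124}{12}=\frac{31}{3}$, $f(r)=\frac{r^2+4r+3}{8}$ for odd $r\ge 5$, and $f(r)=\frac{r^2+2r}{8}$ for even $r\ge 10$. So it suffices to check $r<f(r)$ on each of these finitely-described pieces.

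First I would dispose of the small cases by hand: for $r=2,\dots,8$ one reads off from Table~\ref{tab:tab15} that $f(2)=f(3)=5$, $f(4)=f(5)=6$, $f(6)=7$, $f(7)=10$, $f(8)=\frac{31}{3}$, and in each case $r<f(r)$ is immediate. For the linear range $2\le r\le 6$ even, $r<\frac{r+8}{2}$ is equivalent to $2r<r+8$, i.e. $r<8$, which holds; and $f(3)=5>3$ trivially. Then for the two quadratic ranges I would reduce the inequality to a quadratic one: for odd $r\ge 5$, $r<\frac{r^2+4r+3}{8}$ is equivalent to $8r<r^2+4r+3$, i.e. $0<r^2-4r+3=(r-1)(r-3)$, which is positive for all $r\ge 5$ (indeed for all $r>3$); for even $r\ge 10$, $r<\frac{r^2+2r}{8}$ is equivalent to $8r<r^2+2r$, i.e. $0<r^2-6r=r(r-6)$, which is positive for all $r\ge 7$, in particular for $r\ge 10$. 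This covers every $r\ge 2$.

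There is no real obstacle here; the statement is a routine consequence of the explicit formulas for $f(r)$ established in the preceding lemma, and the only "care" needed is to make sure the case division matches exactly the ranges on which each formula is valid (in particular not to apply the $\frac{r^2+2r}{8}$ formula below $r=10$ or the $\frac{r^2+4r+3}{8}$ formula below $r=5$). One could even streamline the write-up: since $f$ is increasing (again by the previous lemma) and $f(2)=5>2$, $f(3)=5>3$, it is enough to verify $r<f(r)$ for $r=2,3$ and then, for each $r\ge 4$, note $f(r)\ge f(r')$ for a suitable comparison value — but the cleanest route is still the direct case check above, and I would present it that way. I would then remark that in fact the argument shows the stronger bound $f(r)-r\to\infty$, which is what makes the "square scale" phrasing of the main theorems meaningful, though only the stated strict inequality is needed downstream.
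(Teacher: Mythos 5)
Your proposal is correct and follows essentially the same route as the paper: reduce to the explicit piecewise formulas for $f(r)$ from the preceding lemma and verify $r<f(r)$ on each piece, with the two quadratic ranges handled via the factorizations $(r-1)(r-3)>0$ and $r(r-6)>0$ exactly as in the paper's proof. The only difference is cosmetic (you additionally read off the small cases $2\le r\le 8$ from the table, which the paper instead covers by the linear formula and the single check $f_2(8)-8=\tfrac{7}{3}>0$).
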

\begin{proof}
Obviously, we have
$$
\aligned
f_1(3)-3=\frac{3+7}{2}-3=2>0, \mbox{ and}
\endaligned
$$
$$
\aligned
f_1(r)-r=\frac{r^{2}+4r+3}{8}-r=\frac{1}{8}(r-1)(r-3)>0, \mbox{ for } r\geq5,
\endaligned
$$
which implies that for odd  $r\geq3$, the result holds.

For even $r\geq2$, we have
$$
f_2(r)-r=\frac{r+8}{2}-r=\frac{8-r}{2}>0, \mbox{ for } 2\leq r\leq6, \hspace{0.5cm}
f_2(8)-8=\frac{7}{3}>0, \mbox{ and}
$$
$$
f_2(r)-r=\frac{r^{2}+2r}{8}-r=\frac{1}{8}r(r-6)>0, \mbox{ for } r\geq10,
$$
so the result holds.
\end{proof}
\begin{Lem}\label{lower}
If integers $r\geq2$ and $n>f(r)$, then we have $\kappa(Q_{n};K_{1,r})\geq \lceil\frac{n}{2}\rceil$ and
$\kappa^{s}(Q_{n};K_{1,r})\geq \lceil\frac{n}{2}\rceil$.
\begin{proof}Since $\kappa^{s}(Q_{n};K_{1,r})\leq \kappa(Q_{n};K_{1,r})$, we only show
$\kappa^{s}(Q_{n};K_{1,r})\geq \lceil\frac{n}{2}\rceil$.
Suppose to the contrary that $\kappa^{s}(Q_{n};K_{1,r})< \lceil\frac{n}{2}\rceil$. Then $Q_{n}$ has a set $F$ of subgraphs that each is a star of at most $r$ leaves so that
$|F|\leq \lceil\frac{n}{2}\rceil-1$ and $Q_{n}-F$ is disconnected. So
\begin{equation}\label{F}
|V(F)|\leq(1+r)|F|\leq(1+r)(\lceil\frac{n}{2}\rceil-1)\leq\frac{1}{2}(r+1)(n-1).
\end{equation}

Let $C$ be a smallest component of $Q_{n}-F$ and $k=|V(C)|$. We distinguish the following three cases by considering the neighborhood of $C$ and $g$-extra connectivity in $Q_n$.

\textbf{Case $1$}. $k=1$.

 Let $C=\{u\}$. By Lemma 3.2, $|N_{Q_{n}}(u)\cap V(K_{1,r'})|\leq 2$ for  each member $K_{1,r'}$ in $F$, $0\leq r'\leq r$. Thus
\begin{eqnarray}\nonumber
n=|N_{Q_{n}}(u)|&\leq&\sum_{K\in F}|N_{Q_{n}}(u)\cap V(K)|\leq 2|F|\\
&\leq& 2(\lceil\frac{n}{2}\rceil-1)\leq2(\frac{n+1}{2}-1)
=n-1, \nonumber
\end{eqnarray}
 a contradiction.

\textbf{Case $2$}. $2\leq k \leq\frac{r}{2}+1$.

 From the given conditions, we know that $n\geq 6$. By Lemma 3.5, we have $|N_{Q_{n}}(C)|\geq nk-2(k-1)-\tbinom{k-1}{2}$. By Lemma 3.4, $|N_{Q_{n}}(C)\cap V(K_{1,r'})|\leq 2(k-1)$ for  each member $K_{1,r'}$ in $F$, $0\leq r'\leq r$.
 We have
$$
\aligned
nk-2(k-1)-\tbinom{k-1}{2}&\leq |N_{Q_{n}}(C)|\leq 2(k-1)|F|\leq 2(k-1)(\lceil\frac{n}{2}\rceil-1)\\
&\leq  (k-1)(n-1),
\endaligned
$$
which implies that
 $$n\leq \frac{k(k-1)}{2}.$$
If $r$ is even, then $n\leq \frac{r^{2}+2r}{8}$, contradicting $n>{\rm max}\{\frac{r^{2}+2r}{8},
\frac{r+8}{2},\frac{r^{2}+6r+12}{12}\}=f(r)$.
If $r$ is odd, then $n\leq \frac{r^{2}-1}{8}$, contradicting  $n>{\rm max}\{\frac{r+7}{2},\frac{r^{2}+4r+3}{8}\}=f(r)$.

\textbf{Case $3$}. $k\geq\frac{r+1}{2}+1$.

If $r$ is even,
then $ k\geq\frac{r}{2} +2$. Since $n>f(r)\geq\frac{r+8}{2}$, $0<\frac{r}{2}+1\leq n-4$, by Lemma 3.6 we have
\begin{eqnarray}
\kappa_{\frac{r}{2}+1}(Q_{n})&=&(2+\frac{r}{2})n-2 (\frac{r}{2}+1)-\tbinom{\frac{r}{2}+1}{2}\nonumber\\
&=&\frac{-r^{2}}{8}+\frac{rn}{2}+2n-\frac{5r}{4}-2. \label{k}
\end{eqnarray}

\noindent Since $Q_{n}-F$ is disconnected and $C$ is a smallest component of  $Q_{n}-F$, $|V(F)|\geq \kappa_{\frac{r}{2}+1}(Q_{n})$, so by Ineq. (\ref{F}) and Eq. (\ref{k}) we have
$$
\aligned
\frac{1}{2}(r+1)(n-1)\geq \frac{-r^{2}}{8}+\frac{rn}{2}+2n-\frac{5r}{4}-2,
\endaligned
$$
which implies that $n\leq\frac{r^{2}+6r+12}{12}$, contradicting  $n>{\rm max}\{\frac{r^{2}+2r}{8},\frac{r+8}{2},\frac{r^{2}+6r+12}{12}\}=f(r)$.

If $r$ is odd, then
$ k\geq\frac{r-1}{2}+2$. Since $n>f(r)\geq\frac{r+7}{2}$, $0<\frac{r-1}{2}+1\leq n-4$, by Lemma 3.6 we have
\begin{eqnarray}
\kappa_{\frac{r-1}{2}+1}(Q_{n})&=&(2+\frac{r-1}{2})n-2 (\frac{r-1}{2}+1)-\tbinom{\frac{r-1}{2}+1}{2}\nonumber\\
&=&\frac{-r^{2}}{8}+\frac{(r+3)n}{2}-r-\frac{7}{8}. \label{kodd}
\end{eqnarray}

Since $Q_{n}-F$ is disconnected and $C$ is a smallest component of  $Q_{n}-F$, $|V(F)|\geq\kappa_{\frac{r-1}{2}+1}(Q_{n})$. So by Ineq. (\ref{F}) and Eq. (\ref{kodd}) we have

$$
\aligned
\frac{1}{2}(r+1)(n-1)\geq\frac{-r^{2}}{8}+\frac{(r+3)n}{2}-r-\frac{7}{8},
\endaligned
$$
which implies  $n\leq\frac{r^{2}+4r+3}{8}$, a contradiction to $n>{\rm max}\{\frac{r+7}{2},\frac{r^{2}+4r+3}{8}\}=f(r)$.
\end{proof}
\end{Lem}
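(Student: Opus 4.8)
The plan is to argue by contradiction, and since $\kappa^{s}(Q_{n};K_{1,r})\le\kappa(Q_{n};K_{1,r})$ it suffices to bound the substructure connectivity from below. So I assume $\kappa^{s}(Q_{n};K_{1,r})<\lceil\frac{n}{2}\rceil$, giving a substructure-cut $F$ of at most $\lceil\frac{n}{2}\rceil-1$ stars, each with at most $r$ leaves. The first routine step is the global vertex count $|V(F)|\le(1+r)(\lceil\frac{n}{2}\rceil-1)\le\frac{1}{2}(r+1)(n-1)$. Then I take a smallest component $C$ of $Q_{n}-F$ and set $k=|V(C)|$; the structural fact driving everything is that every neighbor of $C$ outside $C$ must have been deleted, i.e. $N_{Q_{n}}(C)\subseteq V(F)$. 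The argument then splits according to $k$, and the pieces of the maxima defining $f(r)$ (treated separately for even and odd $r$) are engineered precisely to defeat the three resulting regimes.

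In the regime $k=1$, the component is a single isolated vertex $u$; by Lemma 3.2 each star of $F$ meets $N_{Q_{n}}(u)$ in at most two vertices, so $n=|N_{Q_{n}}(u)|\le 2|F|\le 2(\lceil\frac{n}{2}\rceil-1)\le n-1$, an immediate contradiction. In the regime $2\le k\le\frac{r}{2}+1$ I combine the neighborhood lower bound $|N_{Q_{n}}(C)|\ge nk-2(k-1)-\tbinom{k-1}{2}$ from Lemma 3.5 with the per-star bound $|N_{Q_{n}}(C)\cap V(K)|\le 2(k-1)$ from Lemma \ref{connection}; since $N_{Q_{n}}(C)\subseteq V(F)$ this gives $nk-2(k-1)-\tbinom{k-1}{2}\le 2(k-1)|F|\le(k-1)(n-1)$, which simplifies to $n\le\frac{k(k-1)}{2}$. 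Substituting the largest admissible $k$ then yields $n\le\frac{r^{2}+2r}{8}$ for even $r$ and $n\le\frac{r^{2}-1}{8}$ for odd $r$, each contradicting $n>f(r)$.

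The last and heaviest regime is $k\ge\frac{r+1}{2}+1$. Here the point is that, because $C$ is a smallest component and $k\ge g+1$ for the choice $g=\frac{r}{2}+1$ (even $r$) or $g=\frac{r-1}{2}+1$ (odd $r$), the deleted set $V(F)$ is a genuine $g$-extra cut, so $|V(F)|\ge\kappa_{g}(Q_{n})$. I then substitute the exact value of $\kappa_{g}(Q_{n})$ from Lemma 3.6 and compare it with $|V(F)|\le\frac{1}{2}(r+1)(n-1)$; after rearranging, this forces $n\le\frac{r^{2}+6r+12}{12}$ (even $r$) or $n\le\frac{r^{2}+4r+3}{8}$ (odd $r$), again contradicting $n>f(r)$. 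The main obstacle, and the reason $f(r)$ carries the extra linear terms $\frac{r+8}{2}$ and $\frac{r+7}{2}$, is that Lemma 3.6 supplies the quadratic formula $\kappa_{g}(Q_{n})=(g+1)n-2g-\tbinom{g}{2}$ only in the range $0\le g\le n-4$; so before invoking it I must check $g\le n-4$, namely $\frac{r}{2}+1\le n-4$ (even) or $\frac{r-1}{2}+1\le n-4$ (odd), which is exactly what $n>f(r)\ge\frac{r+8}{2}$ (resp. $\frac{r+7}{2}$) guarantees. Verifying that the quadratic thresholds obtained in the two heavy cases are dominated by the matching pieces of $f(r)$ is the only place requiring care, but it is a finite parity-by-parity computation rather than a conceptual difficulty.
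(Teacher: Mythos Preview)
Your proposal is correct and follows essentially the same approach as the paper: the same reduction to $\kappa^{s}$, the same global count $|V(F)|\le\frac{1}{2}(r+1)(n-1)$, the same three-regime case split on $k=|V(C)|$, and the same pairing of Lemmas 3.2/3.4/3.5 in the small-$k$ regimes with the $g$-extra connectivity formula of Lemma 3.6 in the large-$k$ regime, including the verification $g\le n-4$ via the linear pieces of $f(r)$. No substantive differences.
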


From Lemma \ref{F1} we know that the condition of Lemma \ref{lower} implies that of Lemma \ref{upper}. Hence
we obtain the following main result in this section.
\begin{The}\label{hypercube}
If $r\geq2$ and $n>f(r)$, then $\kappa(Q_{n};K_{1,r})=\kappa^{s}(Q_{n};K_{1,r})= \lceil\frac{n}{2}\rceil$.
\end{The}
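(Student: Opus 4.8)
The plan is simply to splice together the upper bound of Lemma \ref{upper}, the lower bound of Lemma \ref{lower}, and the trivial inequality $\kappa^{s}(G,T)\leq\kappa(G,T)$; all the real work has already been done in those two lemmas (the explicit star-cut construction around the vertex $000\cdots0$ for the upper bound, and the three-case analysis via $g$-extra connectivity of $Q_n$ for the lower bound). So the proof of the theorem itself is a bookkeeping argument about the range of $n$.

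First I would check that the hypothesis $n>f(r)$ is strong enough to make Lemma \ref{upper} applicable. Lemma \ref{upper} requires $n\geq r\geq 2$ and $n\geq 3$. By Lemma \ref{F1} we have $r<f(r)$, hence $n>f(r)>r\geq 2$, which yields $n\geq r+1$; in particular $n\geq r$ and $n\geq 3$. Therefore Lemma \ref{upper} gives
\begin{equation*}
\kappa(Q_{n};K_{1,r})\leq\Bigl\lceil\tfrac{n}{2}\Bigr\rceil\qquad\text{and}\qquad\kappa^{s}(Q_{n};K_{1,r})\leq\Bigl\lceil\tfrac{n}{2}\Bigr\rceil.
\end{equation*}

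Next, the hypothesis $n>f(r)$ is exactly the hypothesis of Lemma \ref{lower}, which directly gives $\kappa(Q_{n};K_{1,r})\geq\lceil\frac{n}{2}\rceil$ and $\kappa^{s}(Q_{n};K_{1,r})\geq\lceil\frac{n}{2}\rceil$. Combining this with the upper bounds above and the fact that $\kappa^{s}(Q_{n};K_{1,r})\leq\kappa(Q_{n};K_{1,r})$, I would chain the inequalities
\begin{equation*}
\Bigl\lceil\tfrac{n}{2}\Bigr\rceil\leq\kappa^{s}(Q_{n};K_{1,r})\leq\kappa(Q_{n};K_{1,r})\leq\Bigl\lceil\tfrac{n}{2}\Bigr\rceil,
\end{equation*}
which forces equality throughout and proves the theorem. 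There is no genuine obstacle in this last step; the only point that needs care is the implication $n>f(r)\Rightarrow n\geq r$, and that is precisely what Lemma \ref{F1} is set up to supply. The substantive difficulties lie upstream — establishing the neighborhood bound $|N_{Q_n}(C)\cap V(K_{1,r})|\leq 2(k-1)$ in Lemma \ref{connection} and carrying out Case 2 and Case 3 of Lemma \ref{lower} against the quadratic thresholds hidden in the definition of $f(r)$ — and those are taken as given here.
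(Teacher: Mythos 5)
Your proposal is correct and is essentially the paper's own proof: the paper likewise just observes (via Lemma \ref{F1}, i.e.\ $r<f(r)$) that the hypothesis $n>f(r)$ implies the hypotheses of Lemma \ref{upper}, and then combines the upper bound of Lemma \ref{upper} with the lower bound of Lemma \ref{lower}. Your write-up merely makes the bookkeeping explicit.
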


\section{Some low dimensional cases of $Q_n$ }
For $r\leq 6$ and $Q_n$, the remaining cases not solved are to determine the values of $\kappa(Q_{n};K_{1,r})$ and $\kappa^{s}(Q_{n};K_{1,r})$   for $4\leq r \leq 6$ and $n=r$ and $r+1$.

In this section, we will solve separately the low dimensional cases, which cannot be treated in the previous unified way. Latifi \cite {5} express $Q_n=Q_n^0 \bigotimes Q_n^1$, where $Q_n^0\cong Q_{n-1}$ and $Q_n^1\cong Q_{n-1}$. $Q_n^0$ and $Q_n^1$ induced by the vertices with the $i$th coordinates $0$ and $1$ respectively, where $1\leq i\leq n$. In following, $N_{G-A}(A)=\{x|xy\in E(G), x\in G-A, y\in A\}$.
\begin{Lem}$\kappa^{s}(Q_4,K_{1,4})\geq2.$
\end{Lem}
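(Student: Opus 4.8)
The plan is to show that no single subgraph of $K_{1,4}$ can be a substructure-cut of $Q_4$, i.e.\ that removing one star with at most $4$ leaves from $Q_4$ cannot disconnect it; this immediately gives $\kappa^{s}(Q_4,K_{1,4})\geq 2$. So suppose for contradiction that $F=\{T\}$ where $T$ is (isomorphic to) a connected subgraph of $K_{1,4}$ — hence $T$ is a single vertex, a single edge, or a star $K_{1,t}$ with $2\leq t\leq 4$ — and $Q_4-T$ is disconnected. Since $Q_4$ is $4$-regular and $4$-connected, a single vertex cannot disconnect it, so $|V(T)|\geq 2$; and since $Q_4$ is bipartite and hence triangle-free, $T$ really is a star $K_{1,t}$ with $t\in\{1,2,3,4\}$ (where $t=1$ means an edge), so $2\leq |V(T)|\leq 5$.

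The key step is a neighbourhood-counting argument on a smallest component $C$ of $Q_4-T$, exactly parallel to the proof of Lemma~\ref{lower} but now in the low-dimensional regime where the earlier inequalities fail. Let $k=|V(C)|$. If $k=1$, write $C=\{u\}$; by Lemma~3.2, $|N_{Q_4}(u)\cap V(T)|\leq 2$, but $|N_{Q_4}(u)|=4$, so $u$ must have neighbours outside $V(T)$, contradicting that $\{u\}$ is a component. If $k\geq 2$, then by Lemma~\ref{connection} we have $|N_{Q_4}(C)\cap V(T)|\leq 2(k-1)$; on the other hand, since $Q_4-T$ is disconnected with $C$ a smallest component, $N_{Q_4}(C)\subseteq V(T)$, and $\kappa_{k-1}(Q_4)\leq |V(T)|\leq 5$ forces $k$ to be small. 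Concretely, by Lemma~3.5, $|N_{Q_4}(C)|\geq 4k-2(k-1)-\binom{k-1}{2}$; combining with $|N_{Q_4}(C)|\leq 2(k-1)\leq 2\cdot 4=8$ (as $|V(T)|\le 5$) rules out all $k$ except a short finite list, and for each surviving value of $k$ one checks directly that a component $C$ of that size cannot have its entire neighbourhood inside a $5$-vertex star. In particular $2k-1\geq 4k-2(k-1)-\binom{k-1}{2}$ would be needed if equality in Lemma~\ref{connection} were used, which already bounds $k$; and since $Q_4$ has only $16$ vertices, $C$ and $T$ together occupy at most $5+k$ vertices, so the complement $Q_4-T-C$ must also be nonempty and every edge leaving $C$ must land in $V(T)$.

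The main obstacle is the genuinely finite-case nature of the argument: the asymptotic extra-connectivity bound ($\kappa_{g}(Q_4)=\frac{n(n-1)}{2}=6$ for $g\geq n-3=1$, by Lemma~3.6) says that disconnecting $Q_4$ so that every component has at least $2$ vertices requires removing at least $6$ vertices, while $|V(T)|\leq 5$; this handles $k\geq 2$ cleanly in one stroke. Thus the real content reduces to the case $k=1$ (done above) together with the observation $|V(T)|\leq 5<6=\kappa_{1}(Q_4)$, so there is no delicate estimate left — the only care needed is to confirm that Lemma~3.6 applies at $n=4$ (it is stated for $n\geq 4$) and that the "only one vertex" escape clause in the definition of connectivity is correctly excluded because $|V(T)|<\kappa(Q_4)=4$ would be needed even for that, whereas here we are exploiting the stronger extra-connectivity. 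I would write the proof in two short paragraphs: first $k=1$ via Lemma~3.2, then $k\geq 2$ via $|V(T)|\leq 1+4=5<6\leq\kappa_{1}(Q_4)$ from Lemma~3.6.
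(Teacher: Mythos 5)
Your proof is correct, but it takes a genuinely different route from the paper's. The paper proves this lemma by decomposition: it splits $Q_4$ into two copies $Q_4^0, Q_4^1$ of $Q_3$, places the center of the fault star $F$ in $Q_4^0$, invokes the known value $\kappa^{s}(Q_3,K_{1,3})=2$ to conclude that each half minus its portion of $F$ stays connected, and then finds a surviving cross edge because at most one vertex of $F$ lies in $Q_4^1$ while $Q_4^0-F_0$ still has at least $4$ vertices, each with a cross neighbor. Your argument instead analyzes a smallest component $C$ of $Q_4-T$: if $|V(C)|=1$ then Lemma 3.2 caps the number of neighbors of $C$ inside the star at $2<4$, and if $|V(C)|\geq 2$ then $V(T)$ is a $1$-extra cut, so $|V(T)|\geq\kappa_{1}(Q_4)=\tfrac{4\cdot3}{2}=6$ by Lemma 3.6 (which does apply at $n=4$), contradicting $|V(T)|\leq 5$. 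Both steps are sound; the only caveat is that Lemma 3.2 formally requires $r\geq 2$, but the cases $|V(T)|\leq 2$ are trivial, as you note. Your middle paragraph, which also gestures at the Lemma 3.4/3.5 counting, is superfluous once the extra-connectivity bound is in hand; the clean two-case version in your final paragraph is all that is needed. What each approach buys: the paper's decomposition is self-contained modulo the cited $Q_3$ result and mirrors the inductive structure used in its later low-dimensional lemmas, while yours is shorter, avoids any dependence on $\kappa^{s}(Q_3,K_{1,3})$, and is stylistically identical to the paper's own general argument in Lemma 3.9 and in Lemma 4.5, so it unifies the low-dimensional case with the general one.
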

\begin{proof} We set $F$ being a star of at most $4$ leaves, $F_i=F\cap Q_4^i$$(i=0,1)$. It is sufficient to prove that $Q_4-F$ is connected. Without loss of generality, assume the center of $F$ belongs to $Q_4^0$. It is noticed that $Q_4^0-F_0$ and $Q_4^1-F_1$ are both connected since $\kappa^{s}(Q_3,K_{1,3})=2$. Since there exists at least $V(Q_4^0-F_0)=2^3-4=4$ edges between $Q_4^0-F_0$ and $Q_4^1-F_1$ but $|V(F_1)|\leq1$, there is an edge between $Q_4^0-F_0$ and $Q_4^1-F_1$. Thus $Q_4-F$ is connected.
\end{proof}
\begin{Lem}$\kappa^{s}(Q_5,K_{1,4})\geq3.$
\end{Lem}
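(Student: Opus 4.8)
The plan is to mimic the argument of the previous lemma, splitting $Q_5$ into two copies of $Q_4$ along a suitable dimension and showing that at most two stars (each with $\leq 4$ leaves, so $|V(F)| \le 2(r+1)=10$ vertices total) cannot disconnect $Q_5$. Suppose for contradiction that there is a $K_{1,4}$-substructure cut $F$ with $|F|\le 2$, and write $F=F_0\cup F_1$ where $F_i=F\cap Q_5^i$ for the splitting $Q_5=Q_5^0\otimes Q_5^1$ along dimension $i$; each $Q_5^j\cong Q_4$. First I would observe that a good choice of splitting dimension can be made: since $|F|\le 2$ and each member of $F$ is connected, one can pick a coordinate $i$ so that each side receives a controlled amount of $F$. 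The cleanest subcase is when one side, say $Q_5^1$, contains at most one full star of $F$ (hence $|V(F_1)|\le 5$) and the induced pieces on each side are still manageable.

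The key steps, in order, would be: (1) By Lemma \ref{upper} applied with $n=4$, $r=4$ (or directly by $\kappa^{s}(Q_3,K_{1,3})=2$ pushed up one dimension, i.e.\ by Lemma~4.1 which gives $\kappa^{s}(Q_4,K_{1,4})\ge 2$), each $Q_5^j-F_j$ has at most one "small" component — more precisely, if $F_j$ restricted to $Q_5^j$ has $\le 1$ star then $Q_5^j-F_j$ is connected, and if it has $2$ stars one needs the structure of the $2$-set cuts of $Q_4$. (2) Count the edges of the perfect matching between $Q_5^0$ and $Q_5^1$: there are $2^4=16$ such edges. A component $C$ of $Q_5-F$ that meets $Q_5^0$ has at least $|V(C)\cap V(Q_5^0)|$ matching-edges leaving it into $Q_5^1$; these can be blocked only by vertices of $F_1$, of which there are at most $|V(F_1)|\le 10$ (and typically far fewer). (3) Combine: if $Q_5^0-F_0$ is connected with at least $16-|V(F_0)|\ge 16-10=6$ vertices, and likewise on the other side, then since $|V(F_1)|<$ (number of cross edges from the big side), a surviving cross edge forces $Q_5-F$ connected, a contradiction. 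The delicate point is handling the case where $F$ splits evenly ($F_0$ and $F_1$ each effectively a star, or one side has a genuine $2$-cut of $Q_4$), which is where I expect to invoke more carefully the classification of small disconnecting sets of $Q_4$ and the fact that isolating a vertex in $Q_4$ needs $4$ vertices while $|V(F_j)|\le 5$ barely allows one isolated vertex plus no room to disconnect the rest.

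I expect the main obstacle to be the borderline even split: two stars of $F$ with centers in different copies $Q_5^0$, $Q_5^1$, so each $F_j$ is (up to the leaves crossing dimension $i$) a single $K_{1,r'}$ with $r'\le 4$ together with a few stray leaves from the other star. One must show that after removing this from $Q_4$, the remainder is connected with enough vertices, and that the $\le 5$ vertices available in $F_{1-j}$ cannot simultaneously destroy all the cross-matching edges incident with a would-be separated component. A clean way around this is to choose the splitting dimension $i$ adaptively: since $F$ has at most $10$ vertices but $n=5$ dimensions are available, by a pigeonhole/averaging argument there is a dimension $i$ in which the two stars of $F$ "cooperate", e.g.\ the total number of vertices of $V(F)$ that have a neighbor across dimension $i$ inside the other copy is small, or one copy contains at most one center; fixing such an $i$ reduces everything to the easy subcase (1)–(3) above. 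The only genuinely case-specific check left is a small finite verification that no $2$-star substructure cut of $Q_5$ can isolate a single vertex (which needs $\deg=5$ blocked, impossible since two stars give each vertex at most $2+2=4$ neighbors in $V(F)$ by Lemma \ref{connection}/Lemma~3.2) nor separate a component of size $\ge 2$ (where one uses Lemma \ref{connection} together with the extra-connectivity bound $\kappa_1(Q_5)=2\cdot 5-2=8>10$? — here one must double-check the arithmetic, but the spirit is that $|V(F)|=10$ is too small to meet the relevant $g$-extra connectivity threshold of $Q_5$).
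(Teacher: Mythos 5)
Your skeleton (split $Q_5$ into two copies of $Q_4$, keep one copy connected via $\kappa^{s}(Q_4,K_{1,4})\geq 2$, then use the cross matching to bound any separated component) is the same as the paper's, but the endgame — which is where the lemma is actually won or lost — is missing, and the one quantitative check you offer is wrong. You write that one should use ``$\kappa_1(Q_5)=2\cdot 5-2=8>10$''; in fact $8<10$, and since $|V(F)|$ can be as large as $2(r+1)=10$ while $\kappa_g(Q_5)=\frac{5\cdot 4}{2}=10$ for all $g\geq 2$ (Lemma 3.6), the raw $g$-extra-connectivity comparison never produces a contradiction for components of size $\geq 2$: you only ever get $10\geq 10$. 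So the ``spirit'' you describe ($|V(F)|$ too small to meet the extra-connectivity threshold) genuinely fails here, and this is not a detail to ``double-check'' but the crux of the proof.

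What closes the argument is the \emph{per-star} neighborhood bounds rather than the global cut bound. First, a component $C$ not reaching the connected half $Q_5^1-F_2^1$ satisfies $N_{Q_5^1}(C)\subseteq F_2^1$, hence $|V(C)|\leq |V(F_2^1)|\leq 5$; this a priori bound is needed because the lower bound of Lemma 3.5, $(g+1)n-2g-\binom{g}{2}$, decays again for $g\geq 5$ and would not finish larger components. Then for $k=|V(C)|\in\{2,3\}$ one uses Lemmas 3.3 and 3.4: each star absorbs at most $2(k-1)$ neighbors of $C$, so $|N_{Q_5}(C)|\leq 4(k-1)$, against $|N_{Q_5}(C)|\geq 5k-2(k-1)-\binom{k-1}{2}$, giving $8\leq 4$ and $10\leq 8$, contradictions; for $k\in\{4,5\}$ Lemma 3.5 gives $|N_{Q_5}(C)|\geq 11>10\geq |V(F)|$. (For $k=1$ your argument via Lemma 3.2 is fine.) Finally, your ``adaptive pigeonhole'' choice of splitting dimension is unnecessary: a $K_{1,4}$ in $Q_5$ uses only four of the five dimensions, so one can always split along the unused dimension and place that entire star in one copy, which is all the paper needs.
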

\begin{proof} We set $F_i$ be a star of at most $4$ leaves. It is sufficient to prove that $Q_5-F_1-F_2$ is connected. If $F_1\ncong K_{1,4}$ and $F_2\ncong K_{1,4}$, then the result holds since $\kappa^{s}(Q_5,K_{1,3})=3.$ Thus we assume that $F_i\cong K_{1,4}$ and $Q_5^i\cap F_2=F_2^i(i=0,1)$. Without loss of generality, assume $F_1\subseteq Q_5^0$. Since $\kappa^{s}(Q_4,K_{1,4})\geq2$ by Lemma 4.1, $Q_5^1-F_2^1$ is connected. If $Q_5^0-F_1-F_2^0$ is connected, then $Q_5-F_1-F_2$ is connected since $|V(Q_5^0-F_1-F_2^0)|\geq2^4-10=6$ and $|V(F_2^1)|\leq5$, there is a vertex in $Q_5^0-F_1-F_2^0$ which has a neighbor in $Q_5^1-F_2^1$.
If $Q_5^0-F_1-F_2^0$ is disconnected and each component of $Q_5^0-F_1-F_2^0$ connects to $Q_5^1-F_2^1$, then $Q_5-F_1-F_2$ is connected. Hence, we consider there is a component $C$ of $Q_5^0-F_1-F_2^0$ which is not connecting to $Q_5^1-F_2^1$. Then $N_{Q_5}(C)\subseteq (F_1\cup F_2)$ and $N_{Q_5^1}(C)\subseteq F_2^1$, which implies that $|V(C)|=|N_{Q_5^1}(C)|\leq |V(F_2^1)|\leq5$.
If $1\leq|V(C)|\leq2$, then $5\leq|N_{Q_5}(C)|\leq\sum_{i=1}^2|N_{Q_5}(C)\cap F_i|\leq 4$ by Lemmas 3.2 and 3.3, a contradiction.
If $|V(C)|=3$, then $10\leq|N_{Q_5}(C)|\leq\sum_{i=1}^2|N_{Q_5}(C)\cap F_i|\leq 8$ by Lemma 3.4, a contradiction.
If $4\leq|V(C)|\leq5$, then $|N_{Q_5}(C)|\geq 11$ by Lemma 3.5, so we have $11\leq|N_{Q_5}(C)|\leq\sum_{i=1}^2|V(F_i)|\leq 10$, a contradiction. Thus $Q_5-F_1-F_2$ is connected.
\end{proof}
By the above two lemmas and Lemma 3.7, we obtain the following theorem.
\begin{The}For $4\leq n\leq 5$, $\kappa(Q_n,K_{1,4})=\kappa^{s}(Q_n,K_{1,4})=\lceil\frac{n}{2}\rceil.$
\end{The}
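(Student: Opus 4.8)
The plan is to prove the equality by sandwiching both $\kappa^{s}(Q_{n},K_{1,4})$ and $\kappa(Q_{n},K_{1,4})$ between matching bounds and then collapsing them via the universal inequality $\kappa^{s}(G,T)\le\kappa(G,T)$. For the upper bound, Lemma~\ref{upper} applies directly: for $n\in\{4,5\}$ we have $n\ge r=4$ and $n\ge 3$, so $\kappa(Q_{n},K_{1,4})\le\lceil\frac{n}{2}\rceil$, which equals $2$ when $n=4$ and $3$ when $n=5$. The lower bounds are exactly Lemmas~4.1 and~4.2, giving $\kappa^{s}(Q_{4},K_{1,4})\ge 2$ and $\kappa^{s}(Q_{5},K_{1,4})\ge 3$. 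Combining, for each $n\in\{4,5\}$ we obtain the chain $\lceil\frac{n}{2}\rceil\le\kappa^{s}(Q_{n},K_{1,4})\le\kappa(Q_{n},K_{1,4})\le\lceil\frac{n}{2}\rceil$, which forces all three quantities to coincide with $\lceil\frac{n}{2}\rceil$. So at the level of the theorem itself the argument is just this short assembly of the two preceding lemmas together with the construction lemma; I would simply cite Lemmas~4.1, 4.2 and~\ref{upper}.

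The substance lies inside Lemmas~4.1 and~4.2. If I were establishing those from scratch, I would use the standard recursive structure $Q_{n}=Q_{n}^{0}\bigotimes Q_{n}^{1}$, two copies of $Q_{n-1}$ joined by a perfect matching. Given a substructure-cut $F$ whose members are stars with at most four leaves, I would write $F_{i}=F\cap Q_{n}^{i}$, place the center of the relevant star in $Q_{n}^{0}$, and use the already-known value $\kappa^{s}(Q_{3},K_{1,3})=2$ (for $n=4$) or Lemma~4.1 (for $n=5$) to keep the fault-free side connected. The only way $Q_{n}-F$ can break apart is for some component $C$ of the faulty side $Q_{n}^{0}-F_{1}-F_{2}^{0}$ to have all of its matching neighbors on the other side destroyed by $F_{2}^{1}$; this immediately bounds $|V(C)|\le|V(F_{2}^{1})|$, which is at most $5$.

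The main obstacle, and where the real case analysis lives, is ruling out such a trapped component $C$ by examining $k=|V(C)|$: for $1\le k\le 2$ one bounds $|N_{Q_{n}}(C)\cap V(F_{i})|$ by Lemmas~3.2 and~3.3; for $k=3$ by Lemma~\ref{connection}; and for $4\le k\le 5$ by the $g$-extra neighborhood estimate of Lemma~3.5. In each range the resulting lower bound on $|N_{Q_{n}}(C)|$ strictly exceeds the total number of vertices the at most two stars $F_{1},F_{2}$ can supply, a contradiction, so no such $C$ exists and $Q_{n}-F$ is connected. Feeding these lower bounds together with the upper bound of Lemma~\ref{upper} into the sandwich above then yields the theorem. (For context, this is exactly the range left uncovered, since Theorem~\ref{hypercube} needs $n>f(4)=6$ and the case $n=6$ is already settled in~(\ref{2}).)
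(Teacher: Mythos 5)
Your proposal is correct and matches the paper's approach exactly: the theorem is obtained by combining the upper bound of Lemma~3.7 with the lower bounds of Lemmas~4.1 and~4.2, and your sketch of how those two lemmas are proved (the $Q_n^0\bigotimes Q_n^1$ decomposition, bounding a trapped component by $|V(F_2^1)|\le 5$, and the case analysis on $k=|V(C)|$ via Lemmas~3.2--3.5) is precisely what the paper does.
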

\begin{Lem}$\kappa^{s}(Q_5,K_{1,5})\geq3.$
\end{Lem}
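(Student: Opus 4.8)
The plan is to mimic the $Q_5=Q_5^0\bigotimes Q_5^1$ decomposition used for Lemma 4.2, where $Q_5^0\cong Q_5^1\cong Q_4$, but now allowing a star member to be a copy of $K_{1,5}$. Let $F_1$ and $F_2$ be stars each with at most $5$ leaves; it suffices to show $Q_5-F_1-F_2$ is connected. We may assume $F_1\neq F_2$, since deleting a single star with at most $5$ leaves cannot disconnect $Q_5$ (its vertices lie in the closed neighborhood of one vertex $v$, and $Q_5$ minus that closed neighborhood is connected). If neither $F_1$ nor $F_2$ is isomorphic to $K_{1,5}$, then both have at most $4$ leaves, so $\{F_1,F_2\}$ would be a $K_{1,4}$-substructure-cut of $Q_5$ of size $2$, contradicting $\kappa^{s}(Q_5,K_{1,4})\geq 3$ (Lemma 4.2); hence $Q_5-F_1-F_2$ is connected in that case. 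So I may assume $F_1\cong K_{1,5}$. Since $Q_5$ is $5$-regular, $V(F_1)=\{c\}\cup N_{Q_5}(c)$, where $c$ is the center of $F_1$, and every one of the five dimensions carries exactly one edge of $F_1$; in particular $F_1$ is never confined to one half, whichever splitting dimension is chosen.

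The next step is to choose the splitting dimension according to $F_2$. \textbf{Case A: $F_2\cong K_{1,5}$}, with center $c'$; recall $c'\neq c$. If $c'=\overline{c}$, then $V(F_1)\cup V(F_2)$ is exactly the set of vertices at Hamming distance at most $1$ from $c$ or from $\overline{c}$, so $Q_5-F_1-F_2$ is the subgraph induced by the vertices at distance $2$ or $3$ from $c$; this is connected by an elementary routing argument (from a distance-$2$ vertex, alternately adding and deleting a single coordinate reaches every other vertex of those two layers). If $c'\neq\overline{c}$, choose a dimension $i$ with $c_i=c'_i$; then both centers lie in one half, say $Q_5^0$, each of $F_1,F_2$ contributing a $K_{1,4}$ to $Q_5^0$ and a single leaf to $Q_5^1$, so $Q_5^1-F_1-F_2$ is $Q_4$ minus two vertices, hence connected since $\kappa(Q_4)=4$. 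A component $C$ of $Q_5^0-F_1-F_2$ with no neighbor in $Q_5^1-F_1-F_2$ then has $|V(C)|\leq 2$ and $N_{Q_5}(C)\subseteq V(F_1)\cup V(F_2)$, so $5\leq |N_{Q_5}(C)|\leq |N_{Q_5}(C)\cap V(F_1)|+|N_{Q_5}(C)\cap V(F_2)|\leq 2+2$ by Lemmas 3.2 and 3.3, a contradiction; thus $Q_5-F_1-F_2$ is connected. \textbf{Case B: $F_2\ncong K_{1,5}$}. Then $F_2$ uses at most $4$ of the $5$ dimensions; choose a dimension $i$ avoiding every edge of $F_2$, so $F_2$ lies inside one half, while $F_1$ splits into a $K_{1,4}$ on the side of $c$ and a single leaf on the other. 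If $F_2$ lies on the leaf side, then the half containing $c$ equals $Q_4$ minus a single $K_{1,4}$, connected by Lemma 4.1; if $F_2$ lies on the side of $c$, then the opposite half equals $Q_4$ minus one vertex. Either way one half of $Q_5-F_1-F_2$ is connected; since $|V(F_1)\cup V(F_2)|\leq 11<16$, at least $16-11=5$ of the $16$ crossing edges survive, so that connected half reaches the other half, and it remains only to rule out the components $C$ of the other half that reach the connected half through no edge, which satisfy $|V(C)|\leq 5$ and $N_{Q_5}(C)\subseteq V(F_1)\cup V(F_2)$.

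The main obstacle is this last step, specifically for $|V(C)|\in\{4,5\}$. When $|V(C)|\leq 3$ one finishes at once: the extra-connectivity bound Lemma 3.5 gives $|N_{Q_5}(C)|\geq 5,\,8,\,10$ for $|V(C)|=1,2,3$, whereas Lemmas 3.2, 3.3 and 3.4 give $|N_{Q_5}(C)|\leq 4,\,4,\,8$ respectively. When $|V(C)|=4$ or $5$, Lemma 3.5 only yields $|N_{Q_5}(C)|\geq 11$, and this does not beat the crude bound $|V(F_1)|+|V(F_2)|\leq 6+5=11$. The extra ingredient needed is that, since $C$ is disjoint from $V(F_1)$ and $F_1\cong K_{1,5}$, no vertex of $C$ is a leaf of $F_1$, so the center $c$ does not lie in $N_{Q_5}(C)$; hence $|N_{Q_5}(C)\cap V(F_1)|\leq |V(F_1)|-1=5$, and together with $|N_{Q_5}(C)\cap V(F_2)|\leq |V(F_2)|\leq 5$ this forces $|N_{Q_5}(C)|\leq 10<11$, the desired contradiction. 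Combining all cases shows $Q_5-F_1-F_2$ is always connected, so $\kappa^{s}(Q_5,K_{1,5})\geq 3$.
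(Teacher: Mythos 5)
Your proof is correct, and although it draws on the same toolkit as the paper (the $Q_5=Q_5^0\bigotimes Q_5^1$ decomposition, Lemmas 3.2--3.5, Lemma 4.1 and the $K_{1,4}$ result of Lemma 4.2/Theorem 4.3), it takes a noticeably different route through the cases. The paper fixes one splitting dimension and, when both $F_1,F_2\cong K_{1,5}$, distinguishes whether the two centers lie in the same half or in different halves, then compares a smallest component of $Q_5-F_1-F_2$ with the half that remains connected; you instead choose the splitting dimension adaptively (a dimension missed by the edges of $F_2$, or a coordinate where the two centers agree), which eliminates the paper's ``centers in different halves'' case altogether, and you handle the only configuration where no agreeing coordinate exists, $c'=\overline{c}$, by the direct observation that the two middle layers of $Q_5$ induce a connected subgraph. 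You also show that every component of the non-connected half attaches to the connected half, avoiding the paper's smallest-component size comparisons. Most notably, for a stranded component with $4\le|V(C)|\le 5$ you make explicit the point the paper's argument actually needs: since $F_1\cong K_{1,5}$ is a full star in the $5$-regular $Q_5$, its center cannot appear in $N_{Q_5}(C)$, so $|N_{Q_5}(C)|\le 5+5=10<11$; the paper's corresponding bound ``$\sum_{i}|V(F_i)|\le 10$'' is otherwise unjustified there, since $|V(F_1)|+|V(F_2)|$ can equal $11$. So your version is not only correct but slightly more careful than the printed proof at that step.
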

\begin{proof}Suppose to the contrary that $\kappa^{s}(Q_{5};K_{1,5})\leq2$. Let $F_i$ be a star of at most 5 leaves such that $Q_{5}-F_1-F_2$ is disconnected and $C$ be a smallest component of $Q_{5}-F_1-F_2$. We assume that for $i=0,1$, $F_1^i=F_1\cap Q_5^i$, $F_2^i=F_2\cap Q_5^i$ and $C_i=C\cap Q_5^i$. If $F_1\ncong K_{1,5}$ and $F_2\ncong K_{1,5}$, then $Q_5-F_1-F_2$ is connected since $\kappa^{s}(Q_5,K_{1,4})=3$ by Theorem 4.3. Thus we assume that $F_i\cong K_{1,5}$.

\textbf{Case 1.} $F_1\cong K_{1,5}$ and $F_2\ncong K_{1,5}$. We set $x$ is the center of $F_1$. Without loss of generality, assume that $F_2\subseteq Q_5^0$. Since $Q_5^1\cong Q_4$ and $\kappa^{s}(Q_4,K_{1,4})\geq2$ by Lemma 4.1, $Q_5^1-F_1^1$ is connected. If $Q_5^0-F_1^0-F_2$ is connected, then $C=Q_5-F_1-F_2$ since $|V(Q_5^0-F_1^0-F_2)|\geq2^4-10=6$ and $|V(F_1^1)|\leq5$, there is a vertex in $Q_5^0-F_1^0-F_2$ which has a neighbor in $Q_5^1-F_1^1$. If $Q_5^0-F_1^0-F_2$ is disconnected and each component of $Q_5^0-F_1-F_2^0$ connects to $Q_5^1-F_2^1$, then $C=Q_5-F_1-F_2$ is connected. Hence, there exists a smallest component $C'$ of $Q_5^0-F_1-F_2^0$ which is not connecting to $Q_5^1-F_1^1$. Then $N_{Q_5}(C')\subseteq (F_1\cup F_2)$ and $N_{Q_5^1}(C')\subseteq F_1^1$, which implies that $|V(C')|=|N_{Q_5^1}(C')|\leq |V(F_1^1)|\leq5$. As we know that $C'$ is also a component of $Q_5-F_1-F_2$. Since $Q_5^1-F_1^1$ is connected, the components of $Q_5-F_1-F_2$ either contains $Q_5^1-F_1^1$ or not. If $C\neq C'$, then $(Q_5^1-F_1^1)\subseteq C$ and $|V(C)|\geq|V(Q_5^1-F_1^1)|\geq2^4-5=11>|V(C')|$, it is a contradiction since $C$ is a smallest component. Thus $C=C'$. If $1\leq|V(C)|\leq2$, then $5\leq|N_{Q_5}(C)|\leq\sum_{i=1}^2|N_{Q_5}(C)\cap F_i|\leq 4$ by Lemmas 3.2 and 3.3, a contradiction. If $|V(C)|=3$, then $10\leq|N_{Q_5}(C)|\leq\sum_{i=1}^2|N_{Q_5}(C)\cap F_i|\leq 8$ by Lemma 3.4, a contradiction. If $4\leq|V(C)|\leq5$, then $|N_{Q_5}(C)|\geq 11$ by Lemma 3.5, so we have $11\leq|N_{Q_5}(C)|\leq\sum_{i=1}^2|V(F_i)|\leq 10$, a contradiction. Thus $Q_5-F_1-F_2$ is connected.

\textbf{Case 2.} $F_i\cong K_{1,5}$$(i=1,2)$. We set $F_1=\{x,x_1,x_2,x_3,x_4,x_5\}$, $F_2=\{y,y_1,y_2,y_3,y_4,y_5\}$ where $\{xx_i,yy_i\}\subset E(Q_5)(i\in\{1,2,\ldots,5\})$. We have the following cases by the positions of $x$ and $y$.

\textbf{Case 2.1.} Both $x$ and $y$ belong to $V(Q_5^i)$. Without loss of generality, assume that $x$ and $y$ belong to $V(Q_5^0)$ and $\{x_5,y_5\}\subset V(Q_5^1)$, Then $Q_5^{1}-x_5-y_5$ is connected since $\kappa(Q_5^1)=4$. Thus $C_1=Q_5^{1}-x_5-y_5$ or $C_1=\emptyset$. If $C_1=Q_5^{1}-x_5-y_5$, then $|V(C)|\geq|V(C_1)|=2^4-2=14$. We have $|V(Q_5-F_1-F_2)|-|V(C)|\leq2^5-12-14=6$, it is a contradiction since $C$ is a smallest component. Therefore $C=C_0$. Then $N_{Q_5^1}(C)\subseteq \{x_5,y_5\}$ and $|V(C)|=|N_{Q_5^1}(C)|\leq2$. If $1\leq|V(C)|\leq2$, then $5\leq|N_{Q_5}(C)|\leq \sum_{i=1}^2|N_{Q_5}(C)\cap F_i|\leq 4$ by Lemmas 3.2 and 3.3, a contradiction. Thus $Q_5-F_1-F_2$ is connected.

\textbf{Case 2.2.} Either $x$ or $y$ belongs to $V(Q_5^i)$. Without loss of generality, assume that $x\in V(Q_5^0)$, $y\in V(Q_5^1)$ and $x_5\in V(Q_5^1)$, $y_5\in V(Q_5^0)$. Then $N_{Q_5^0}(C_0)\subseteq (F_1^0\cup\{y_5\})$. If $1\leq|V(C_0)|\leq2$, then $4\leq|N_{Q_5^0}(C_0)|\leq |N_{Q_5^0}(C_0)\cap F_1^0|+1\leq 3$ by Lemmas 3.2 and 3.3, a contradiction. If $3\leq|V(C_0)|\leq5$, then $6\leq|N_{Q_5^0}(C_0)|\leq |\{x_1,x_2,x_3,x_4,y_5\}|=5$ by Lemmas 3.5, a contradiction.
Thus $|V(C_0)|\geq 6$ and $|V(C_1)|\geq 6$ by a similar argument. So $|V(C)|\geq12$ and $|V(Q_5-F_1-F_2)|-|V(C)|\leq 2^5-12-12=8$. It contradicts to that $C$ is a smallest component. Thus $Q_5-F_1-F_2$ is connected.
\end{proof}
\begin{Lem}For $5\leq r\leq6$, $\kappa^{s}(Q_6,K_{1,r})\geq3$.
\end{Lem}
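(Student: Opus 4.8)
The plan is to argue by contradiction using only neighbourhood estimates together with the extra connectivity of $Q_6$, so that (unlike the $Q_5$ case treated in Lemma 4.4) no $Q_6=Q_6^0\otimes Q_6^1$ decomposition is needed. Suppose $\kappa^{s}(Q_6,K_{1,r})\le 2$ with $5\le r\le 6$. Then there are subgraphs $F_1,F_2$ of $Q_6$, each isomorphic to a star with at most $r$ leaves, such that $Q_6-F_1-F_2$ is disconnected; in particular $|V(F_1)\cup V(F_2)|\le 2(r+1)\le 14$. Let $C$ be a smallest component of $Q_6-F_1-F_2$ and set $k=|V(C)|$. I would then run through the possible values of $k$.

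For $k=1$, write $C=\{u\}$; since $N_{Q_6}(u)\subseteq V(F_1)\cup V(F_2)$ and $|N_{Q_6}(u)\cap V(F_i)|\le 2$ by Lemma 3.2, we get $6=|N_{Q_6}(u)|\le 4$, a contradiction. For $k=2$, $C$ is a single edge, and its two endpoints have no common neighbour since $Q_6$ is bipartite, hence $|N_{Q_6}(C)|=2\cdot 5=10$, which contradicts the bound $|N_{Q_6}(C)\cap V(F_i)|\le 2$ of Lemma 3.3. For $k=3$, Lemma 3.4 gives $|N_{Q_6}(C)\cap V(F_i)|\le 2(k-1)=4$, so $|N_{Q_6}(C)|\le 8$, while Lemma 3.5 with $g=2$ gives $|N_{Q_6}(C)|\ge 3\cdot 6-2\cdot 2-1=13$, a contradiction.

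For the remaining range $k\ge 4$ I would invoke Lemma 3.6: since $C$ is a smallest component, every component of $Q_6-(V(F_1)\cup V(F_2))$ has at least $k\ge 4$ vertices, so $V(F_1)\cup V(F_2)$ is a $3$-extra cut of $Q_6$; hence $|V(F_1)\cup V(F_2)|\ge \kappa_3(Q_6)=\frac{6\cdot 5}{2}=15>14$, a contradiction. This exhausts all values of $k$ and yields $\kappa^{s}(Q_6,K_{1,r})\ge 3$.

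I do not anticipate a genuine obstacle; the only step requiring care is $k=3$ when $r=6$, since there the crude estimate $|N_{Q_6}(C)|\le|V(F_1)\cup V(F_2)|\le 14$ fails to contradict $\kappa_2(Q_6)=13$, which is exactly why one must instead combine the per-star bound of Lemma 3.4 with the lower bound of Lemma 3.5 as above. (If one prefers to stay in the style of Lemma 4.4, an alternative is to split $Q_6=Q_6^0\otimes Q_6^1$ and recurse on $\kappa^{s}(Q_5,K_{1,5})\ge 3$ together with $\kappa^{s}(Q_4,K_{1,4})\ge 2$ and $\kappa(Q_5)=4$; this also works but is considerably longer.)
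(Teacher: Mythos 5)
Your proposal is correct and follows essentially the same route as the paper: the same case analysis on $k=|V(C)|$, using Lemmas 3.2--3.3 for $k\le 2$, combining Lemma 3.4 with Lemma 3.5 for $k=3$, and invoking $\kappa_3(Q_6)=15$ from Lemma 3.6 against $|V(F_1)|+|V(F_2)|\le 14$ for $k\ge 4$. The only differences are cosmetic (you split $k=1$ and $k=2$ into separate subcases and note the bipartite count $|N_{Q_6}(C)|=10$, where the paper treats $1\le k\le 2$ together).
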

\begin{proof}Suppose to the contrary that $\kappa^{s}(Q_{6};K_{1,r})\leq2$. Let $F_i(i=1,2)$ be a star of at most $r$ leaves and $Q_{6}-F_1-F_2$ is disconnected. Let $C$ be a smallest components of $Q_{6}-F_1-F_2$.

If $1\leq|V(C)|\leq2$, by Lemmas 3.2 and 3.3, $|N_{Q_{6}}(C)\cap V(F_i)|\leq 2$. Thus
$6\leq|N_{Q_{6}}(C)|\leq\sum_{i=1}^2|N_{Q_{6}}(C)\cap V(F_i)|\leq 4$
 a contradiction.

If $|V(C)|=3$, then $|N_{Q_{6}}(C)\cap V(F_i)|\leq 4$ by Lemma 3.4 and $|N_{Q_{6}}(C)|\geq 13$ by Lemma 3.5. Thus
$13\leq|N_{Q_{6}}(C)|\leq\sum_{i=1}^2|N_{Q_{6}}(C)\cap V(F_i)|\leq 8$,
 a contradiction.

If $|V(C)|\geq4$, by Lemma 3.6 we have $\kappa_3(Q_6)=15$. Since $Q_{6}-F_1-F_2$ is disconnected and $C$ is a smallest component of $Q_{6}-F_1-F_2$, $14\geq|V(F_1)|+|V(F_2)|\geq\kappa_3(Q_6)=15$.
A contradiction.
\end{proof}
By Lemmas 4.4 and 4.5, we have $\kappa^{s}(Q_n,K_{1,5})\geq 3$ with $5\leq n\leq 6$. Thus we get the following theorem by Lemma 3.7.
\begin{The}For $5\leq n\leq 6$, $\kappa(Q_n,K_{1,5})=\kappa^{s}(Q_5,K_{1,5})=3.$
\end{The}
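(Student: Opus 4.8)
The plan is to assemble Theorem~4.6 from pieces already established, exactly as Theorem~4.3 was assembled from Lemmas 4.1, 4.2 and Lemma~\ref{upper}. At this stage no genuinely new argument is needed: the inequality $\kappa^{s}(G,T)\le\kappa(G,T)$ together with the upper and lower bounds already proved pins down the value.

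For the upper bound I would invoke Lemma~\ref{upper} with $r=5$: each $n\in\{5,6\}$ satisfies $n\ge r\ge 2$ and $n\ge 3$, so $\kappa(Q_{n};K_{1,5})\le\lceil\frac{n}{2}\rceil=3$, and hence also $\kappa^{s}(Q_{n};K_{1,5})\le 3$. For the lower bound I would read off $\kappa^{s}(Q_{5},K_{1,5})\ge 3$ from Lemma~4.4 and $\kappa^{s}(Q_{6},K_{1,5})\ge 3$ from Lemma~4.5 (taking $r=5$ there, which lies in the admissible range $5\le r\le 6$), and then use $\kappa^{s}\le\kappa$ to get $\kappa(Q_{n},K_{1,5})\ge 3$ as well. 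Combining, for each $n$ with $5\le n\le 6$,
\[
3\le\kappa^{s}(Q_{n},K_{1,5})\le\kappa(Q_{n},K_{1,5})\le 3,
\]
so all three quantities equal $3$, which is the assertion.

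The substance of the result really lives in Lemmas~4.4 and~4.5, not in Theorem~4.6 itself, so the only "hard part" worth flagging is the component-size bookkeeping inside the proof of Lemma~4.4 — in particular the case where both fault structures are full copies of $K_{1,5}$, where one decomposes $Q_{5}=Q_{5}^{0}\otimes Q_{5}^{1}$, uses the connectivity of the $Q_{4}$-subcubes (via Lemma~4.1 and $\kappa(Q_{5}^{1})=4$) to keep one side connected, and balances the size of a hypothetical stranded component $C$ against $|N_{Q_{5}}(C)|$ through Lemmas~3.2--3.5. As a proof of the stated theorem, however, the argument is just the two-line sandwich displayed above.
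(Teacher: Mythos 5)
Your proposal is correct and matches the paper's own derivation exactly: the paper also obtains the lower bound $\kappa^{s}(Q_n,K_{1,5})\geq 3$ from Lemmas 4.4 and 4.5 and the upper bound $\lceil\frac{n}{2}\rceil=3$ from Lemma 3.7, then sandwiches via $\kappa^{s}\leq\kappa$. Nothing further is needed.
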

\begin{Lem}$\kappa^{s}(Q_7,K_{1,6})\geq4.$
\end{Lem}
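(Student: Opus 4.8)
The plan is to mimic the inductive decomposition used for $Q_5$ and $Q_6$, splitting $Q_7$ along a suitable coordinate into $Q_7^0\cong Q_6$ and $Q_7^1\cong Q_6$. Suppose for contradiction that $\kappa^{s}(Q_7,K_{1,6})\leq 3$, so there are stars $F_1,F_2,F_3$, each with at most $6$ leaves, with $Q_7-F_1-F_2-F_3$ disconnected; let $C$ be a smallest component. First I would dispose of the case where at most two of the $F_j$ are isomorphic to $K_{1,6}$: if some $F_j\not\cong K_{1,6}$, the deleted set is covered by stars with at most... no, that does not immediately reduce the star count, so instead I would handle $C$ directly by size. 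For $1\le |V(C)|\le 2$, Lemmas 3.2 and 3.3 give $|N_{Q_7}(C)\cap V(F_j)|\le 2$, so $|N_{Q_7}(C)|\le 6$, but $|N_{Q_7}(C)|\ge 7$ (for $|V(C)|=1$) or $\ge 12$ (for $|V(C)|=2$, by Lemma 3.5), a contradiction. For $|V(C)|=3$, Lemma 3.4 gives $|N_{Q_7}(C)\cap V(F_j)|\le 4$ so $|N_{Q_7}(C)|\le 12$, while Lemma 3.5 gives $|N_{Q_7}(C)|\ge 3\cdot7-4-3=14$, a contradiction.

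Next I would treat the case $|V(C)|\ge 4$ using $g$-extra connectivity. Here $|V(F_1)|+|V(F_2)|+|V(F_3)|\ge \kappa_{3}(Q_7)$. By Lemma 3.6, since $n-4=3$ and $g=3\le n-4$, $\kappa_3(Q_7)=4\cdot7-6-3=19$; but $|V(F_1)|+|V(F_2)|+|V(F_3)|\le 3\cdot 7=21$, which does \emph{not} give a contradiction. So the crude bound fails and I must split along a coordinate. Choose a coordinate $i$; set $F_j^{t}=F_j\cap Q_7^{t}$ and $C_t=C\cap Q_7^{t}$ for $t=0,1$. Since $Q_7^{t}\cong Q_6$ and (by Lemma 4.5 and the already-proved Theorem 4.6, together with the known value $\kappa^s(Q_6,K_{1,4})$) each $Q_6$ has $\kappa^s(Q_6,K_{1,r})$ bounded below, I would argue that $Q_7^{t}-F_1^{t}-F_2^{t}-F_3^{t}$ is connected or has a component joined across the split, so that any ``bad'' component $C$ satisfies $N_{Q_7^{1-t}}(C)\subseteq \bigcup_j F_j^{1-t}$, giving $|V(C)|=|N_{Q_7^{1-t}}(C)|\le \sum_j|V(F_j^{1-t})|$. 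Bounding $|V(C)|$ this way and re-running the neighborhood/extra-connectivity inequalities inside $Q_7^{t}$ (where now the relevant $\kappa_g(Q_6)$ values are large enough) should force a contradiction; one must carefully choose which part of $F_j$ lands where, exactly as in Cases 2.1 and 2.2 of Lemma 4.4, keeping track of centers versus leaves.

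The main obstacle I expect is the subcase analysis when all three $F_j\cong K_{1,6}$ and their centers are distributed between $Q_7^0$ and $Q_7^1$ in an unfavorable way — in particular when two or three centers lie on one side, so that one side $Q_7^{t}$ loses up to three full stars ($18$ vertices) and the induction hypothesis $\kappa^s(Q_6,K_{1,6})\ge 3$ (which we only get from Lemma 4.5 once $Q_7$'s statement is in hand, so care is needed to avoid circularity) may be too weak. The resolution should be to bound $|V(C_t)|$ on \emph{both} sides by the number of leaves of $F_j$ crossing the split coordinate, then add: since each $F_j\cong K_{1,6}$ contributes at most a bounded number of vertices to the ``wrong'' side, $|V(C)|=|V(C_0)|+|V(C_1)|$ is small enough (at most $10$ or so) that either the $|V(C)|\le 3$ analysis applies again or Lemma 3.5 inside a single $Q_6$-factor (with $\kappa_g(Q_6)$ for small $g$) yields the contradiction. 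Once $\kappa^s(Q_7,K_{1,6})\ge 4$ is established, combined with Lemma 3.7 it gives $\kappa(Q_7,K_{1,6})=\kappa^s(Q_7,K_{1,6})=4=\lceil 7/2\rceil$, completing the last open low-dimensional case for $r=6$.
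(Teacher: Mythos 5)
Your handling of $|V(C)|\leq 3$ is essentially right (modulo a harmless miscount: Lemma 3.5 with $g=2$ gives $|N_{Q_7}(C)|\geq 3\cdot 7-4-\binom{2}{2}=16$, not $14$), but from $|V(C)|\geq 4$ onward the proposal is an outline whose decisive steps are missing. First, you never pin down how large $C$ must be. The paper's proof applies Lemma 3.5 directly to $C$ in $Q_7$: $7(g+1)-2g-\binom{g}{2}\leq |N_{Q_7}(C)|\leq 21$ forces $(g-4)(g-7)\geq 0$, and the cases $g\leq 4$ are killed by the per-star bounds of Lemmas 3.2 and 3.4, so $|V(C)|\geq 8$. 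Without this lower bound your hoped-for conclusion that $|V(C)|$ is ``at most $10$ or so'' and can be fed back into the small-component analysis leads nowhere: for $8\leq |V(C)|\leq 14$ neither the small-$C$ argument nor Lemma 3.5 in $Q_7$ yields a contradiction, exactly as you observed for $\kappa_3(Q_7)=19\leq 21$.

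Second, the splitting coordinate must be chosen, not arbitrary: a $K_{1,6}$ in $Q_7$ uses only six of the seven directions, so one may split along the unused direction and place one full star, say $F_1$, entirely inside $Q_7^0$. Then $Q_7^1$ loses only $F_2^1\cup F_3^1$, which is covered by two stars with at most six leaves each, so $Q_7^1-F_2^1-F_3^1$ is connected by Lemma 4.5 (which is proved before and independently of the $Q_7$ statement, so the circularity you worry about does not arise). This choice is precisely what defuses your ``unfavorable distribution of centers'' obstacle, and you do not supply a substitute. Finally, the contradiction in the surviving case ($C\subseteq Q_7^0-F_1-F_2^0-F_3^0$ with $|V(C)|\geq 8$, hence $|V(F_2^1\cup F_3^1)|\geq 8$) is none of the tools you list but the extra connectivity of the half-cube: every component of $Q_7^0-F_1-F_2^0-F_3^0$ has at least $8$ vertices, yet the deleted set there has at most $21-8=13<15=\kappa_6(Q_6)$ vertices, contradicting Lemma 3.6. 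As written, your proposal identifies the right decomposition but does not constitute a proof.
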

\begin{proof}Suppose to the contrary that $\kappa^{s}(Q_{7};K_{1,6})\leq3$. Let $F_i$ be star of at most 6 leaves such that $Q_{7}-\cup_{i=1}^3F_i$ is disconnected and $C$ be a smallest components of $Q_{7}-\cup_{i=1}^3F_i$ with $|V(C)|=g+1$. Then $N_{Q_7}(C)\subseteq\cup_{i=1}^3 F_i$ and $N_{Q_7}(C)\subseteq\cup_{i=1}^3 (N_{Q_7}(C)\cap F_i)$. By Lemma 3.5, we have
$$
\aligned
7(g+1)-2g-\frac{1}{2}g(g-1)\leq|N_{Q_7}(C)|\leq\sum_{i=1}^3|F_i|\leq21,
\endaligned
$$
it implies that $g\leq4$ or $g\geq7$; for $g\leq4$, it contradicts to $|N_{Q_7}(C)|\leq\sum_{i=1}^3|N_{Q_7}(C)\cap F_i|$ by Lemmas 3.2 and 3.4.
Thus we have $|V(C)|\geq 8$.
We assume that $C_i=C\cap Q_7^i$$(i=0,1)$. If $F_i\ncong K_{1,6}$$(i=1,2,3)$, then $Q_{7}-\cup_{i=1}^3F_i$ is connected since $\kappa^{s}(Q_7,K_{1,5})=4$ by Theorem \ref{hypercube}. So we consider $F_i\cong K_{1,6}$. Without loss of generality, we assume that $F_1\cong K_{1,6}$, $F_1\subseteq Q_7^0$ and $Q_7^i\cap F_2=F_2^i$, $Q_7^i\cap F_3=F_3^i$$(i=0,1)$. Since $Q_7^i\cong Q_6$ and $\kappa^{s}(Q_6,K_{1,6})\geq3$ by Lemma 4.5, $Q_7^1-F_2^1-F_3^1$ is connected. Thus we know the components of $Q_7-\cup_{i=1}^3F_i$ either contains $Q_7^1-F_2^1-F_3^1$ or not. We have $C_1=Q_7^1-F_2^1-F_3^1$ or $C_1=\emptyset$.
If $C_1=Q_7^1-F_2^1-F_3^1$, then $|V(C_1)|\geq2^6-14=50$. Since each vertex in $C_1$ has exactly one neighbor in $N_{Q_7^0}(C_1)$, $|N_{Q_7^0}(C_1)|=|V(C_1)|\geq50$ and $N_{Q_7^0}(C_1)\subseteq (F_1\cup F_2^0\cup F_3^0\cup C_0)$. We know $|N_{Q_7^0}(C_1)|\leq|V(F_1\cup F_2^0\cup F_3^0)|+|V(C_0)|\leq21+|V(C_0)|$, which implies that $|V(C_0)|\geq29$. We get $|V(C)|=|V(C_1)|+|V(C_0)|\geq79$ and $|V(Q_7-\cup_{i=1}^3F_i)|-|V(C)|<2^7-79=49$, it is a contradiction since $C$ is a smallest component. Therefore $C\subseteq Q_7^0-F_1-F_2^0-F_3^0$ with $|V(C)|\geq8$. We have $8\leq|V(C)|=|N_{Q_7^1}(C)|\leq|V(F_2^1\cup F_3^1)|$, then $|V(F_2^1\cup F_3^1)|\geq8$. By the above analysis, we know the component of $Q_7-\cup_{i=1}^3F_i$ which contains $Q_7^1-F_2^1-F_3^1$ have at least 29 vertices in $Q_7^0-F_1-F_2^0-F_3^0$, so
the component of $Q_7^0-F_1-F_2^0-F_3^0$ has at least 8 vertices.
Then $\kappa_6(Q_7^0)\leq|V(F_1\cup F_2^0\cup F_3^0)|\leq|V(\cup_{i=1}^3F_i)|-|V(F_2^1\cup F_3^1)|\leq13$. As we know $\kappa_6(Q_6)=15$ by Lemma 3.6. It is a contradiction. Thus $Q_7-\cup_{i=1}^3F_i$ is connected.
\end{proof}
By Lemma 3.7 and Lemmas 4.5, 4.7, we have the following theorem.
\begin{The} $\kappa(Q_6,K_{1,6})=\kappa^{s}(Q_6,K_{1,6})=3$;
$\kappa(Q_7,K_{1,6})=\kappa^{s}(Q_7,K_{1,6})=4.$
\end{The}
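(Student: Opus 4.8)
The plan is simply to assemble the bounds already proved: the uniform upper bounds from Lemma \ref{upper} and the case-specific lower bounds from Lemmas 4.5 and 4.7, glued together by the trivial inequality $\kappa^{s}(G,T)\le\kappa(G,T)$. No new construction or counting argument is needed.

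First I would handle $Q_6$. The hypotheses of Lemma \ref{upper} ($r\ge 2$, $n\ge r$, $n\ge 3$) hold for $n=r=6$, so Lemma \ref{upper} gives $\kappa(Q_6,K_{1,6})\le\lceil 6/2\rceil=3$ and $\kappa^{s}(Q_6,K_{1,6})\le 3$. Lemma 4.5 with $r=6$ supplies $\kappa^{s}(Q_6,K_{1,6})\ge 3$. Chaining $3\le\kappa^{s}(Q_6,K_{1,6})\le\kappa(Q_6,K_{1,6})\le 3$ forces equality throughout, i.e. $\kappa(Q_6,K_{1,6})=\kappa^{s}(Q_6,K_{1,6})=3$. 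Then I would do $Q_7$ the same way: Lemma \ref{upper} with $n=7$, $r=6$ yields $\kappa(Q_7,K_{1,6})\le\lceil 7/2\rceil=4$ and $\kappa^{s}(Q_7,K_{1,6})\le 4$, while Lemma 4.7 gives $\kappa^{s}(Q_7,K_{1,6})\ge 4$; combining with $\kappa^{s}\le\kappa$ gives $\kappa(Q_7,K_{1,6})=\kappa^{s}(Q_7,K_{1,6})=4$.

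The honest assessment is that there is no real obstacle at this point: all the substance lies in the extremal star-cut constructions of Lemma \ref{upper} and in the neighborhood/extra-connectivity estimates that power Lemmas 4.5 and 4.7, and those have already been carried out. The only thing worth double-checking is that the side conditions of Lemma \ref{upper} are satisfied in both instances — and for $(n,r)=(6,6)$ and $(7,6)$ they clearly are — so the theorem follows immediately.
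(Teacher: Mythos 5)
Your proposal is correct and matches the paper's proof exactly: the paper also derives the theorem by combining the upper bound $\lceil n/2\rceil$ from Lemma \ref{upper} with the lower bounds of Lemmas 4.5 and 4.7, using $\kappa^{s}\leq\kappa$. Nothing further is needed.
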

\section{The star-Structure connectivity of Fold Hypercube }
In this section, we study the $\kappa(FQ_{n};K_{1,r})$ and $\kappa^{s}(FQ_{n};K_{1,r})$ for $r\geq2$.
It is known that $FQ_{n}$ is triangle-free for $n\geq3$.
\begin{Lem}{\rm\cite {16}}
Any two vertices in $FQ_{n}$ exactly have two common neighbors for $n\geq 4$ if they have any.
\end{Lem}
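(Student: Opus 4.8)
The plan is to work from the definition of $FQ_n$ as $Q_n$ together with the perfect matching joining every vertex $u$ to its antipode $\overline u = u^{\{1,\dots,n\}}$, so that the neighbours of $u$ are exactly $u^1,\dots,u^n$ and $\overline u$; in the paper's notation these are the vertices $u^A$ with $A$ a singleton or $A=\{1,\dots,n\}$. Fix two distinct vertices $u,v$ and let $D\subseteq\{1,\dots,n\}$ be the set of coordinates in which they differ, so $v=u^D$ and hence $u^A=v^B$ holds iff $A\triangle B=D$. A common neighbour of $u$ and $v$ therefore corresponds to a pair $(A,B)$ with $A,B\in\mathcal E:=\{\{1\},\dots,\{n\}\}\cup\{\{1,\dots,n\}\}$ and $A\triangle B=D$; since $w=u^A$ recovers $A$, counting common neighbours is the same as counting such pairs $(A,B)$.

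Next I would run the short case analysis on the types of $A$ and $B$. If $A=\{i\}$ and $B=\{j\}$ are singletons, then $A\triangle B=D$ forces $|D|=2$ (with $i=j$ giving $u=v$), and conversely $|D|=2$, say $D=\{i,j\}$, yields exactly the two pairs $(\{i\},\{j\})$ and $(\{j\},\{i\})$, i.e. the two common neighbours $u^i$ and $u^j$. If $A=\{i\}$ and $B=\{1,\dots,n\}$ (or the symmetric case), then $A\triangle B=D$ forces $|D|=n-1$, and conversely $|D|=n-1$ gives exactly two pairs, producing the common neighbours $\overline v$ and $\overline u$. The remaining case $A=B=\{1,\dots,n\}$ again forces $u=v$. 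Hence two distinct vertices have a common neighbour if and only if $|D|\in\{2,n-1\}$, and in either case exactly two of them.

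Finally I would note that the hypothesis $n\ge 4$ enters only to guarantee $2\neq n-1$, so that the two branches above cannot both contribute and the count is exactly two whenever it is positive; for $n=3$ the two branches coincide, giving four common neighbours (consistent with $FQ_3\cong K_{4,4}$), which is precisely why the statement excludes small $n$. I do not expect any genuine obstacle: the argument is a finite enumeration over the essentially three types of pairs $(A,B)$, and the bookkeeping of which neighbours coincide is automatic because $w=u^A$ determines $A$. The only point needing slight care is verifying the two identities $u^i=\overline v$ and $\overline u=v^i$ in the $|D|=n-1$ branch, which follow immediately from $\overline v=v^{\{1,\dots,n\}}=u^{\{1,\dots,n\}\triangle D}$.
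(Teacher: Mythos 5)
Your argument is correct and complete. Note that the paper does not prove this statement at all --- it is quoted from reference \cite{16} --- so there is no in-paper proof to compare against; what you have supplied is a self-contained elementary verification. The reduction of ``$w$ is a common neighbour of $u$ and $v=u^D$'' to ``$w=u^A$ with $A,B\in\mathcal{E}$ and $A\triangle B=D$'' is sound, the three-way case split on the types of $A$ and $B$ is exhaustive, and the key bookkeeping point (that $w=u^A$ determines $A$, so counting admissible sets $A$ is the same as counting common neighbours) is exactly right. Your analysis also cleanly explains the two side remarks the paper makes immediately after the lemma: for $n=3$ the conditions $|D|=2$ and $|D|=n-1$ coincide, so both branches contribute and two antipodal-at-distance-two vertices acquire four common neighbours (the paper's example $N(011)\cap N(110)=\{010,001,100,111\}$), while for $n=2$ the branches are still disjoint and the statement survives. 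One tiny point of care, which you implicitly handle: one should also confirm that the two vertices produced within a single branch are distinct, which follows since they correspond to distinct sets $A$ (e.g.\ $\{i\}\neq\{j\}$ in the first branch and $\{i\}\neq\{1,\dots,n\}$ in the second, the latter using $n\geq 2$). No gap.
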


It is easy to find the above lemma is true when $n=2$. For $n=3$, we find
$N_{FQ_{n}}(011)\cap N_{FQ_{n}}(110)=\{010,001,100,111\}$, so Lemma 5.1 is fault when $n=3$.

\begin{Lem}{\rm\cite {11}}
Let $FQ_{n}$ be a folded hypercube. Then
\begin{enumerate}[i)]
            \item \label{cond 1}
             $\kappa(FQ_{n})=n+1$;

            \item \label{cond 2}
              $FQ_{n}$ is a bipartite graph if and only if $n$ is odd;

            \item \label{cond 3}
              If $FQ_{n}$ contains an odd cycle, then a shortest odd cycle has the length  $n+1$.
         \end{enumerate}

\end{Lem}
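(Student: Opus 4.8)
All three items are standard structural facts and the statement is attributed to \cite{11}, so the goal is just to recover the natural arguments. I would dispose of (ii) first, since it is the cleanest: recall that $Q_n$ is bipartite with colour classes $V_0$ and $V_1$ consisting of the even- and odd-weight binary strings. A complementary edge joins $u$, of weight $w$, to $\overline u$, of weight $n-w$, and these two vertices lie in the same colour class iff $w\equiv n-w\pmod 2$, i.e. iff $n$ is even. Hence for odd $n$ every complementary edge still joins $V_0$ to $V_1$, so $FQ_n$ keeps the bipartition; while for even $n$ one complementary edge already joins two vertices of the same class, and adjoining any $Q_n$-path between its endpoints (necessarily of even length) yields an odd closed walk, so $FQ_n$ is not bipartite.

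Next I would handle (iii) by viewing $FQ_n$ as the Cayley graph of $\mathbb Z_2^n$ with connection set $\{e_1,\dots,e_n,\mathbf 1\}$ where $\mathbf 1=e_1+\dots+e_n$. A cycle of length $\ell$ corresponds to a multiset of $\ell$ generators summing to $0$; if $\mathbf 1$ is used $b$ times and $e_j$ is used $m_j$ times, the closing relation forces $m_j\equiv b\pmod 2$ for every $j$. When $b$ is even, each $m_j$ is even and $\ell=\sum_j m_j+b$ is even, so an odd cycle forces $b$ odd, whence every $m_j$ is odd, hence $m_j\ge 1$ for all $j$ and $\ell\ge n+b\ge n+1$. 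For tightness (and to confirm that an odd cycle exists precisely when $n$ is even, as (ii) requires) I would exhibit the cycle made of one complementary edge $u\overline u$ together with a shortest $Q_n$-path from $\overline u$ to $u$: since $u$ and $\overline u$ differ in all $n$ coordinates, this path has length $n$ and $n+1$ distinct vertices, so the whole thing is a genuine $(n+1)$-cycle, odd exactly when $n$ is even. Thus the shortest odd cycle of $FQ_n$, when one exists, has length exactly $n+1$.

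Finally, for (i) the bound $\kappa(FQ_n)\le n+1$ is immediate from $(n+1)$-regularity. For $\kappa(FQ_n)\ge n+1$ I would split $FQ_n$ along a coordinate into two copies of $Q_{n-1}$ joined by two perfect matchings (the dimension-$i$ edges and the complementary edges, which together form a disjoint union of $4$-cycles), take a hypothetical vertex cut $S$ with $|S|\le n$, write $S=S_0\cup S_1$ over the two copies, apply $\kappa(Q_{n-1})=n-1$ inside each copy, and count how many of the crossing edges survive $S$ to force $FQ_n-S$ connected, organizing the whole argument as an induction on $n$ with the small cases checked by hand. This last step is the only one with real content: parts (ii) and (iii) are short parity counts, whereas the lower bound in (i) needs careful bookkeeping of the surviving crossing edges (or, alternatively, an appeal to the known super-connectedness of $Q_n$), and that is where I expect the main difficulty to lie.
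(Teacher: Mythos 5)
This lemma is not proved in the paper at all: it is imported wholesale from reference \cite{11} (Xu and Ma), so there is no in-paper argument to compare against, and your task was genuinely to reconstruct the proofs. Your treatments of (ii) and (iii) are correct and complete: the weight-parity computation for complementary edges settles bipartiteness, and the Cayley-graph count (each $e_j$ used $m_j\equiv b\pmod 2$ times, so an odd closed walk forces $b$ odd and every $m_j\ge 1$, giving length at least $n+1$, with the complementary edge plus an $n$-path from $\overline u$ to $u$ realising it) is exactly the right argument; the parity identity $\ell\equiv b(n+1)\pmod 2$ even makes (ii) and (iii) consistent with each other.

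The only genuine gap is in (i), and you flag it yourself. Your decomposition is the right one, but the sketch only handles the easy case where both $|S_0|$ and $|S_1|$ are at most $n-2$, so that each copy of $Q_{n-1}$ stays connected and a surviving crossing edge (there are $2^n>2n$ of them, each deleted vertex killing at most two) joins them. The case you leave to ``induction with small cases checked by hand'' is the decisive one, and in fact no induction is needed: if, say, $|S_1|\ge n-1$ then $|S_0|\le 1$, so $Q^0-S_0$ is connected, and the two disjoint crossing matchings you identified give every vertex of $Q^1-S_1$ two \emph{distinct} neighbours in $Q^0$ (namely $v^i$ and $\overline v$, which differ since $n\ge 2$); at most one of these lies in $S_0$, so every surviving vertex of $Q^1$ attaches directly to the connected set $Q^0-S_0$, and $FQ_n-S$ is connected. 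Writing out that half-page closes the argument; as it stands, the lower bound $\kappa(FQ_n)\ge n+1$ is asserted by a plan rather than proved.
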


\begin{Lem}
Let $K_{1,r}$ be a star in $FQ_{n}$ with $n\geq 4$ and $n+1\geq  r\geq2$. If $u$ is a vertex in $FQ_{n}-K_{1,r}$, then
$|N_{FQ_{n}}(u)\cap V(K_{1,r})|\leq 2$, and equality holds if and only if $u$ is adjacent to exactly two leaves of $K_{1,r}$.
\begin{proof}Since $FQ_{n}$ is triangle-free for $n\geq 4$, $u$ cannot be adjacent to both a leaf and the center of $K_{1,r}$. If $u$ is just adjacent to a leaf of $K_{1,r}$, then by Lemma 5.1 $u$ has at most two neighbors in the leaves of $K_{1,r}$, which are adjacent to the center of $K_{1,r}$.
\end{proof}
\end{Lem}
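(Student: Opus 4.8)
The plan is to split on whether the vertex $u$ is adjacent to the center of the star, using two structural facts: that $FQ_{n}$ is triangle-free for $n\ge 4$, and that any two vertices of $FQ_{n}$ share at most two common neighbors when $n\ge 4$ (Lemma 5.1). Write $x$ for the center of $K_{1,r}$ and $x_1,\dots,x_r$ for its leaves; since $u\notin V(K_{1,r})$ we have $u\ne x$ and $u\ne x_i$ for every $i$, and for $r\ge 2$ the center/leaf dichotomy is unambiguous.

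First I would handle the case $u\sim x$. If $u$ were also adjacent to some leaf $x_i$, then $u,x,x_i$ would form a triangle, contradicting triangle-freeness of $FQ_{n}$ (which holds for $n\ge 4$). Hence $N_{FQ_{n}}(u)\cap V(K_{1,r})=\{x\}$ has size $1<2$; in particular the value $2$ is not attained in this case, which is consistent with the claimed characterization because here $u$ is adjacent to no leaf. Next, suppose $u\not\sim x$. Then $N_{FQ_{n}}(u)\cap V(K_{1,r})\subseteq\{x_1,\dots,x_r\}$, and each leaf $x_i$ adjacent to $u$ is a common neighbor of $u$ and $x$ (adjacent to $x$ by the definition of the star, adjacent to $u$ by assumption). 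Applying Lemma 5.1 to the distinct vertices $u$ and $x$ (legitimate since $n\ge 4$) gives $|N_{FQ_{n}}(u)\cap N_{FQ_{n}}(x)|\le 2$, hence $|N_{FQ_{n}}(u)\cap V(K_{1,r})|\le 2$.

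Finally I would assemble the ``if and only if''. If $|N_{FQ_{n}}(u)\cap V(K_{1,r})|=2$, then the first case forces $u\not\sim x$ (it would otherwise give size $1$), so the two neighbors both lie among $x_1,\dots,x_r$, i.e.\ $u$ is adjacent to exactly two leaves; conversely, if $u$ is adjacent to exactly two leaves, then $u\not\sim x$ again by triangle-freeness, so $N_{FQ_{n}}(u)\cap V(K_{1,r})$ consists precisely of those two leaves and has size $2$. There is no genuine obstacle in this argument: the only points requiring care are keeping the center/leaf cases separate and confirming that Lemma 5.1 is invoked on an honest pair of distinct vertices (guaranteed by $u\notin V(K_{1,r})$). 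It is worth remarking that the hypothesis $n\ge 4$ is used in an essential way for both ingredients, and the statement can fail for $n=3$, where $FQ_{3}$ has pairs of vertices with four common neighbors.
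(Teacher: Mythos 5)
Your proof is correct and takes essentially the same route as the paper's: triangle-freeness of $FQ_n$ rules out $u$ being adjacent to both the center and a leaf, and Lemma 5.1 applied to $u$ and the center bounds the number of leaf-neighbors by two. You merely spell out the case split and the equality characterization more explicitly than the paper's two-line argument does.
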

For $n\geq 5$, $FQ_{n}$  has no 5-cycle or 3-cycle. So we can derive the following result by analogous arguments as Lemma \ref{connection}.
\begin{Lem}
Let $K_{1,r}$ be a star in $FQ_{n}$ with $n\geq 5$ and $n+1\geq r\geq2$. If $C$ is a connected subgraph in $FQ_{n}-K_{1,r}$ with $|V(C)|=k\geq2$, then
$|N_{FQ_{n}}(C)\cap V(K_{1,r})|\leq 2(k-1)$, and equality holds only  if $C$ is a star in $FQ_{n}$.
\end{Lem}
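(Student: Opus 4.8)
The plan is to transcribe the proof of Lemma~\ref{connection} almost verbatim, making three substitutions: Lemma~3.1 is replaced by Lemma~5.1, Lemma~3.2 by Lemma~5.3 (the hypotheses $n\ge 5$ and $n+1\ge r\ge 2$ are exactly what these two lemmas need), and the role played there by ``$Q_n$ is bipartite / triangle-free'' is taken over by the fact that, for $n\ge 5$, the graph $FQ_n$ contains neither a $3$-cycle nor a $5$-cycle. The latter is immediate from Lemma~5.2: for odd $n\ge 5$ the graph $FQ_n$ is bipartite and has no odd cycle at all, while for even $n\ge 6$ its shortest odd cycle has length $n+1\ge 7$. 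Throughout I keep the notation $V(K_{1,r})=\{x,x_1,\dots,x_r\}$ with centre $x$, and $V(C)=\{u_1,\dots,u_k\}$.

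First I would prove the inequality $|N_{FQ_n}(C)\cap V(K_{1,r})|\le 2(k-1)$. Assuming for contradiction that it is at least $2k-1$, Lemma~5.3 gives that it is at most $2k$, and a short count exactly as in Lemma~\ref{connection} shows that at least $k-1$ of the $u_i$ --- say $u_2,\dots,u_k$ after relabelling the leaves --- each have two leaf neighbours in $K_{1,r}$, with pairwise disjoint pairs, say $u_i\sim x_{2i-3},x_{2i-2}$. Connectedness gives $u_1\sim u_i$ for some $i\ge 2$, and since $2k-1>|\{x_1,\dots,x_{2k-2}\}|$ the vertex $u_1$ must be adjacent either to some leaf $x_j$ with $j\ge 2k-1$ or to the centre $x$. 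In the former case $u_1x_jxx_{2i-3}u_iu_1$ is a $5$-cycle of $FQ_n$, impossible for $n\ge 5$; in the latter case $\{x_{2i-3},x_{2i-2},u_1\}\subseteq N_{FQ_n}(u_i)\cap N_{FQ_n}(x)$ (three distinct vertices, since $u_1\notin V(K_{1,r})$), contradicting Lemma~5.1. Hence the inequality holds; note that its instance $k=2$ is precisely the $FQ_n$-analogue of Lemma~\ref{adjacent}: two adjacent vertices of $FQ_n-K_{1,r}$ have at most two neighbours in $V(K_{1,r})$.

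For the equality clause, suppose $|N_{FQ_n}(C)\cap V(K_{1,r})|=2(k-1)$ while $C$ is not a star. Since $FQ_n$ is triangle-free, a longest path of $C$ has at least four vertices, so $C$ contains a path $P_4=v_1v_2v_3v_4$ and $k\ge 4$. The $k-4$ vertices of $C$ outside $P_4$ supply, by Lemma~5.3, at most $2(k-4)$ of the $2(k-1)$ neighbours in $V(K_{1,r})$, so $|N_{FQ_n}(P_4)\cap V(K_{1,r})|\ge 2(k-1)-2(k-4)=6$. On the other hand, applying the inequality just proved to the two-vertex connected subgraphs $v_1v_2$ and $v_3v_4$ of $FQ_n-K_{1,r}$ gives $|N_{FQ_n}(\{v_1,v_2\})\cap V(K_{1,r})|\le 2$ and $|N_{FQ_n}(\{v_3,v_4\})\cap V(K_{1,r})|\le 2$, whence $|N_{FQ_n}(P_4)\cap V(K_{1,r})|\le 4$, a contradiction. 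Therefore equality forces $C$ to be a star, which completes the plan.

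The one genuinely new feature compared with Lemma~\ref{connection} is that every odd cycle produced in the argument has length $5$, so the hypothesis weakens from ``$Q_n$ has no odd cycle'' to ``$FQ_n$ has no $5$-cycle'', which is exactly why the statement is restricted to $n\ge 5$ rather than $n\ge 4$ (indeed $FQ_4$ does contain $5$-cycles, its odd girth being $n+1=5$), and it is also why Lemma~5.1 must be invoked in the ``centre'' case in place of bipartiteness. I expect the only slightly delicate point, just as in Lemma~\ref{connection}, to be the bookkeeping that pins down which vertex of $C$ contributes the ``extra'' neighbour when $|N_{FQ_n}(C)\cap V(K_{1,r})|\ge 2k-1$; everything else is a line-by-line translation.
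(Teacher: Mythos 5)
Your proposal is correct and follows essentially the same route as the paper's proof of this lemma: the same contradiction argument producing a $5$-cycle or three common neighbours (via Lemma 5.1), and the same $P_4$ argument for the equality clause. The only cosmetic difference is that you obtain the bound ``two consecutive vertices of $P_4$ have at most two neighbours in $V(K_{1,r})$'' as the $k=2$ instance of the inequality you just proved, whereas the paper re-derives it inline from the absence of $5$-cycles and Lemma 5.1.
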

\begin{proof}Let $V(K_{1,r})=\{x,x_{1},x_{2},\ldots, x_{r}\}$ and $E(K_{1,r})=\{xx_{i}|1\leq i\leq r\}$. Then $x$ is the center of $K_{1,r}$. Let $V(C)=\{u_{1},u_{2},\ldots,u_{k}\}$.

First, we prove that $|N_{FQ_{n}}(C)\cap V(K_{1,r})|\leq 2(k-1)$.
Suppose to the contrary that $|N_{FQ_{n}}(C)\cap V(K_{1,r})|\geq 2(k-1)+1=2k-1$.
By Lemma $5.3$, each vertex $u_i$ in $C$ has at most 2 neighbors in $K_{1,r}$, and if $|N_{FQ_{n}}(u_{i})\cap V(K_{1,r})|=2$, then $u_{i}$ is adjacent to two leaves in $K_{1,r}$, so $2k\geq|N_{FQ_{n}}(C)\cap V(K_{1,r})|\geq 2k-1$. It means that there exists at least $k-1$ vertices in $C$ which each has two neighbors in $K_{1,r}$, and such neighbors are pairwise distinct. Without loss of generality, we assume
$\{u_{i}x_{2i-3},u_{i}x_{2i-2}\}\subset E(FQ_{n})$ for $2\leq i\leq k$. Since $C$ is connected, $u_{1}$ is adjacent to $u_{i}$ for some $2\leq i\leq k$. If $u_{1}x_{j}\in E(FQ_{n})$ with  $2k-1\leq j\leq r$, then there exists a 5-cycle $u_{1}x_{j}xx_{2i-3}u_{i}u_{1}$, a contradiction. Otherwise, $u_{1}x\in E(FQ_{n})$. Then
 $N_{FQ_{n}}(u_{i})\cap N_{FQ_{n}}(x)=\{x_{2i-3},x_{2i-2},u_{1}\}$, contradicting  Lemma 5.1. Hence
 $|N_{FQ_{n}}(C)\cap V(K_{1,r})|\leq 2(k-1)$.

 Next we show that if $|N_{FQ_{n}}(C)\cap V(K_{1,r})|= 2(k-1)$, then $C$ is a star in $FQ_{n}$. Suppose to the contrary that $C$ is not a star in  $FQ_{n}$. Then there exists a 4-vertex path $P_{4}$ in $C$, so  $4\leq k$
 and $6\leq|N_{FQ_{n}}(P_{4})\cap V(K_{1,r})|\leq8$ by Lemma 5.3. However,
 any two consecutive vertices in $P_{4}$ have at most two neighbors in $V(K_{1,r})$, since $FQ_{n}(n\geq5)$ has no 5-cycle and $|N_{FQ_{n}}(u)\cap N_{FQ_{n}}(x)|\leq2$ for $u\in V(P_{4})$. This implies that $P_{4}$ has at most four  neighbors in $V(K_{1,r})$, a contradiction.
\end{proof}
 The following lemma can be obtained from Theorem 2.11 of {\rm \cite {15}}.
\begin{Lem}{\rm \cite {15}}
Let $C$ be a subgraph of $FQ_{n}$ with $|V(C)|=g+1$,
 for $n\geq 5$, $1\leq g\leq n+2$. Then $|N_{FQ_{n}}(C)|\geq (n+1)(g+1)-2g-\tbinom{g}{2}$.
\end{Lem}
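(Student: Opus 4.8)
The plan is to read this off from Theorem~2.11 of \cite{15}, which governs how fast neighbourhoods grow in $FQ_{n}$ (equivalently, its $g$-extra connectivity). First I would quote that theorem, match its notation to ours, and check only two bookkeeping points: that the window $n\ge5$, $1\le g\le n+2$ used here sits inside the range for which \cite{15} gives the bound in this closed form, and that the expression it supplies simplifies to $(n+1)(g+1)-2g-\binom{g}{2}$. The second check is unambiguous, since this number is exactly $|N_{FQ_{n}}(C)|$ for the extremal configuration: for $g\le n$, take $C=K_{1,g}$ spanned by $g$ ordinary (cube) edges at a centre $c$; its neighbours outside $C$ are the $n+1-g$ neighbours of $c$ that are not leaves (among them $\bar c$), the $g(n-g)$ fresh cube-neighbours of the $g$ leaves, the $\binom{g}{2}$ vertices at distance two from $c$ each shared by two leaves, and the $g$ leaf-antipodes, which for $n\ge5$ are all distinct and avoid $V(C)$, for a total of $(n+1-g)+g(n-g)+\binom{g}{2}+g=(n+1)(g+1)-2g-\binom{g}{2}$; for the boundary values $g\in\{n+1,n+2\}$ one uses the complement edge at $c$ in place of a missing cube direction and gets the same count.

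A more hands-on way to see the inequality in the generic case is to build on the hypercube estimate already recorded. Since $Q_{n}$ is a spanning subgraph of $FQ_{n}$, we have $N_{FQ_{n}}(C)\supseteq N_{Q_{n}}(C)$, so Lemma~3.5 gives $|N_{Q_{n}}(C)|\ge n(g+1)-2g-\binom{g}{2}$ (applicable since $n\ge5\ge4$). It then suffices to produce $g+1$ more neighbours from the $2^{n-1}$ complement edges, the obvious candidates being the antipodes $\bar v$, $v\in V(C)$: they are pairwise distinct and each $\bar v\in N_{FQ_{n}}(v)$, so $\bar v\in N_{FQ_{n}}(C)$ whenever $\bar v\notin V(C)$, and it is a genuinely new neighbour whenever $\bar v\notin N_{Q_{n}}(C)$, i.e.\ whenever no vertex of $C$ is at Hamming distance $n-1$ from $v$. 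Under these conditions one concludes $|N_{FQ_{n}}(C)|\ge|N_{Q_{n}}(C)|+(g+1)\ge(n+1)(g+1)-2g-\binom{g}{2}$.

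The hard part will be precisely the residual cases this dichotomy does not settle: when $V(C)$ contains a complementary pair $\{v,\bar v\}$, or two vertices at Hamming distance $n-1$ (so that an antipode is already accounted for), together with the boundary values $g\in\{n+1,n+2\}$, where the extremal $C$ is no longer a plain cube-star. Handling these requires the detailed case analysis of \cite{15}, which rests on the common-neighbour structure of $FQ_{n}$ (Lemma~5.1) and its connectivity (Lemma~5.2); so for this paper I would simply invoke Theorem~2.11 of \cite{15} once the notation and ranges have been matched. Note finally that the lemma is applied throughout only with $C$ a component of $FQ_{n}-F$, hence connected, which is the case \cite{15} addresses; extending it to a possibly disconnected $C$, as the literal wording allows, would take a further, non-routine argument controlling how much the neighbourhoods of distinct components of $C$ can overlap, again through Lemma~5.1.
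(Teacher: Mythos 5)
Your plan coincides with the paper's: Lemma 5.5 is stated there without proof, justified solely by the remark that it "can be obtained from Theorem 2.11 of \cite{15}", which is exactly the citation you invoke (and your arithmetic check that the star configuration attains $(n+1)(g+1)-2g-\tbinom{g}{2}$ is correct). The extra partial arguments and caveats you sketch go beyond what the paper records, but the essential step — deferring the bound to \cite{15} after matching notation and ranges — is the same.
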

\begin{Lem}{\rm\cite {14}}
For $n\geq 7$,
\[\kappa_{g}(FQ_{n})=
            \begin{cases}
            (g+1)(n+1)-2g-\tbinom{g}{2},  &\text{if \hspace{0.1cm} $0\leq g\leq n-3$}, \\
            \frac{n(n+1)}{2},   &\text{if \hspace{0.1cm} $n-2\leq g\leq n+1$}. \\
            \end{cases}\]
\end{Lem}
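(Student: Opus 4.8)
The plan is to prove the two branches of the formula by a matching pair of bounds, using first the easy observation that $\kappa_g(FQ_n)$ is non-decreasing in $g$ on $0\le g\le n+1$, since a $(g+1)$-extra cut is in particular a $g$-extra cut. Consequently the second branch follows once one (i) exhibits an $(n+1)$-extra cut of size $\binom{n+1}{2}$ and (ii) shows that every $(n-2)$-extra cut has at least $\binom{n+1}{2}$ vertices; monotonicity then pins $\kappa_g(FQ_n)=\binom{n+1}{2}$ for all $n-2\le g\le n+1$. For the first branch one needs, symmetrically, a $g$-extra cut of size $(n+1)(g+1)-2g-\binom{g}{2}$ and the lower bound that no smaller $g$-extra cut exists.

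For the \emph{upper bounds} I would exhibit explicit extremal sets. When $0\le g\le n-3$, fix a vertex $v$, pick $g$ of its neighbours $v_1,\dots,v_g$, let $C=\{v,v_1,\dots,v_g\}$ (a copy of $K_{1,g}$) and set $S=N_{FQ_n}(C)$. Triangle-freeness of $FQ_n$ ($n\ge 3$) and Lemma 5.1 (any two of the $v_i$ share exactly one further common neighbour, and distinct pairs give distinct such neighbours) yield, by inclusion--exclusion, $|S|=(n+1)(g+1)-2g-\binom{g}{2}$; it then remains to check that $FQ_n-C-S$ is connected with more than $g$ vertices, which is where $n\ge 7$ is used, as $C\cup S$ lies inside the radius-$2$ ball about $v$ and $FQ_n$ minus such a ball is connected and large. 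When $g=n+1$, take instead $C=\{v\}\cup N_{FQ_n}(v)$: because the folded edge makes the distance-$2$ sphere about $v$ consist of the $\binom{n}{2}$ weight-two vertices together with the $n$ weight-$(n-1)$ vertices, $|N_{FQ_n}(C)|=\binom{n}{2}+n=\binom{n+1}{2}$, and again the complement is connected and large for $n\ge 7$.

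For the \emph{lower bound}, let $S$ be a minimum $g$-extra cut, let $C$ be a smallest component of $FQ_n-S$, and put $m=|V(C)|\ge g+1$; since the remaining components are also large, $m\le \tfrac{1}{2}(2^n-|S|)\le 2^{n-1}$. As $N_{FQ_n}(C)\subseteq S$, the whole lower bound amounts to a vertex-isoperimetric statement: for every $A\subseteq V(FQ_n)$ with $g+1\le|A|\le 2^{n-1}$ one has $|N_{FQ_n}(A)|\ge (n+1)(g+1)-2g-\binom{g}{2}$ (resp.\ $\ge\binom{n+1}{2}$). When $|A|$ is small this is delivered by Lemma 5.5 together with the fact that $(n+1)t-2(t-1)-\binom{t-1}{2}$ stays above the target as $t$ ranges over the small sizes; but for larger $A$ the star-tree configuration underlying Lemma 5.5 ceases to be extremal (the cited bound even dips below the target near $|A|=n+3$), so a genuinely stronger inequality is needed there.

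That stronger isoperimetric inequality, in the remaining large range of $|A|$, is the \textbf{main obstacle}. I would prove it by induction on $n$ via the decomposition $FQ_n=FQ_n^0\bigotimes FQ_n^1$ with $FQ_n^0\cong FQ_n^1\cong Q_{n-1}$, the two halves being joined by the dimension-$n$ matching \emph{and} the folded-edge matching, so that every vertex has exactly two neighbours in the opposite half. Writing $A^0,A^1$ for the two parts of $A$, one bounds $|N_{FQ_n}(A)|$ in terms of $|N_{Q_{n-1}}(A^0)|$, $|N_{Q_{n-1}}(A^1)|$ and the cross-edge contributions, feeding in the known facts $\kappa(Q_{n-1})=n-1$ and the values $\kappa_g(Q_{n-1})$ of Lemma 3.6, with separate subcases according to whether $A^0$ or $A^1$ is empty, small, or comparable in size to $A$. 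Keeping the estimate from slipping below the target near the regime boundary $g=n-3$, where the two branches differ by $2$, and using that for $n\ge 7$ the graph $FQ_n$ has no triangle and no $5$-cycle (so neighbourhoods of adjacent or nearby vertices behave as in Lemma 5.1 and its folded analogue) is where the real bookkeeping lies; a Harper-type compression argument would be an alternative route to the same inequality.
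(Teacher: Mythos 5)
This lemma is not proved in the paper at all: it is quoted verbatim from Zhang and Zhou \cite{14}, so there is no in-paper argument to compare yours against, and your attempt has to stand on its own. Judged that way, it does not close. The overall skeleton is sound --- monotonicity of $\kappa_g$ in $g$, explicit extremal cuts for the upper bounds (your neighbourhood counts for the star $K_{1,g}$ and for the closed ball $\{v\}\cup N(v)$ are correct, granting Lemma 5.1 and the verification that the $\binom{g}{2}$ second common neighbours are pairwise distinct and that the complements are connected and large). But the entire substance of the theorem is the lower bound, and you have reduced it to a vertex-isoperimetric inequality, namely that $|N_{FQ_n}(A)|\geq (n+1)(g+1)-2g-\binom{g}{2}$ (resp.\ $\geq\binom{n+1}{2}$) for \emph{all} $A$ with $g+1\leq|A|\leq 2^{n-1}$, which you then explicitly decline to prove. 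As you yourself observe, Lemma 5.5 is only available for $|A|\leq n+3$ and the quadratic $(n+1)t-2(t-1)-\binom{t-1}{2}$ is concave in $t$, so it falls below the target once $t$ passes roughly $2n-g$; hence the cited bound genuinely cannot cover the range, and nothing you have written replaces it there.

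The sketched repair --- induction on $n$ via $FQ_n=FQ_n^0\bigotimes FQ_n^1$ with the two cross-matchings, or a Harper-type compression --- is the right neighbourhood of ideas (it is essentially how Yang--Meng handle $Q_n$ and how Zhang--Zhou handle $FQ_n$), but as stated it is a plan, not a proof: the case analysis on whether $A^0$ or $A^1$ is empty, small, or balanced, the control of the boundary regime $g=n-3$ where the two branches of the formula meet, and the base cases are exactly where the published proof spends all of its effort. One further caution about the reduction itself: it is not enough to bound $|N(C)|$ for the smallest component $C$ using only $|C|\geq g+1$, because for a \emph{minimum} $g$-extra cut the smallest component need not have exactly $g+1$ vertices, and the isoperimetric profile is not monotone in $|A|$; the standard proofs must therefore either take the minimum of the profile over the admissible range or argue separately that large components force $|S|\geq\binom{n+1}{2}$ via the second branch. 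Until that isoperimetric inequality is actually established over the full range, the lower bound --- and with it the lemma --- remains unproved.
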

\begin{Lem}\label{FU}
For $n+1\geq r \geq 2$, $n\geq3$, $\kappa(FQ_{n};K_{1,r})\leq \lceil\frac{n+1}{2}\rceil$ and
$\kappa^{s}(FQ_{n};K_{1,r})\leq \lceil\frac{n+1}{2}\rceil$.
\end{Lem}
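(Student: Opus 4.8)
The plan is to imitate the construction in the proof of Lemma~\ref{upper}, adding one ingredient to cope with the extra ``complementary'' neighbour that every vertex of $FQ_n$ carries. Fix $u=00\cdots0$; then $N_{FQ_n}(u)=\{u^1,\dots,u^n\}\cup\{\overline u\}$ has exactly $n+1$ vertices. It is enough to produce $\lceil\tfrac{n+1}{2}\rceil$ copies of $K_{1,r}$ in $FQ_n$ whose union $S$ contains $N_{FQ_n}(u)$ and omits at least one vertex besides $u$; then $u$ is an isolated vertex of $FQ_n-S$ while $FQ_n-S$ still has a second vertex, so $FQ_n-S$ is disconnected, and since each such star is trivially a connected subgraph of $K_{1,r}$ and $\kappa^s\le\kappa$, both inequalities follow at once. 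The two elementary facts I would use are that $u^{a,b}$ is a common neighbour of $u^a$ and $u^b$, and that $\overline{u^i}$ is a common neighbour of $u^i$ and $\overline u$, because $\overline{u^i}$ is joined to $u^i$ by the complementary edge and to $\overline u$ by the edge of dimension $i$.

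Now I would build the stars, using that $FQ_n$ is $(n+1)$-regular, so for any vertex $w$ and any $r$ of its neighbours (possible since $r\le n+1$) there is a copy of $K_{1,r}$ centred at $w$ with those $r$ vertices as leaves. If $n$ is odd, pair the neighbours of $u$ as $\{u^1,u^2\},\{u^3,u^4\},\dots,\{u^{n-2},u^{n-1}\},\{u^n,\overline u\}$: for $1\le i\le\frac{n-1}{2}$ let $S_i$ be a $K_{1,r}$ centred at $u^{2i-1,2i}$ having $u^{2i-1}$ and $u^{2i}$ among its leaves (the centre has $n-1\ge r-2$ further neighbours to fill out the star), and let $S_{(n+1)/2}$ be a $K_{1,r}$ centred at $\overline{u^n}$ having $u^n$ and $\overline u$ among its leaves. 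If $n$ is even, pair the neighbours as $\{u^1,u^2\},\dots,\{u^{n-1},u^n\}$ with $\overline u$ left over: for $1\le i\le\frac n2$ take $S_i$ a $K_{1,r}$ centred at $u^{2i-1,2i}$ covering $\{u^{2i-1},u^{2i}\}$ as above, and add one further star $S_{n/2+1}$, a $K_{1,r}$ centred at $\overline{u^1}$ with $\overline u$ among its leaves. In both cases there are $\lceil\tfrac{n+1}{2}\rceil$ stars, each isomorphic to $K_{1,r}$, and $S:=\bigcup_iS_i\supseteq N_{FQ_n}(u)$; moreover every vertex of $S$ lies within distance $3$ of $u$ in $FQ_n$, as each centre is at distance $2$ and each leaf at distance $\le3$.

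It then remains to see that $FQ_n-S$ has a vertex besides $u$, i.e., that $|S|\le2^n-2$. For $n\ge5$ this is immediate from $|S|\le(r+1)\lceil\tfrac{n+1}{2}\rceil\le(n+2)\lceil\tfrac{n+1}{2}\rceil\le\tfrac{(n+2)^2}{2}<2^n-1$ (equivalently, for $n\ge7$ one may instead note that $FQ_n$ has diameter $\lceil n/2\rceil>3$). For $n=4$, where $r\le5$, I would check by hand that the forced coincidences among the chosen leaves keep $|V(FQ_4-S)|\ge3$ for each $2\le r\le5$; and for $n=3$ one has $FQ_3\cong K_{4,4}$ and, with the recipe above, $\overline{u^n}=u^{1,2}$, so $S$ lies in the closed neighbourhood of a single vertex and $|S|\le5<7$. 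In every case $FQ_n-S$ is disconnected, giving $\kappa(FQ_n;K_{1,r})\le\lceil\tfrac{n+1}{2}\rceil$, and then $\kappa^s(FQ_n;K_{1,r})\le\lceil\tfrac{n+1}{2}\rceil$ as well.

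The step I expect to be the real obstacle is the low-dimensional bookkeeping. For $n=3,4$ the number of deleted vertices is comparable with $2^n$, so a mere count is not enough and one must exploit the concrete geometry of the construction — the coincidences among the leaves, and the fact that $FQ_3$ is $K_{4,4}$ — to be certain a second component persists. Fixing the leaf sets in the boundary case $r=n+1$ also requires a little care, although the higher regularity of $FQ_n$ compared with $Q_n$ makes that point smoother here than in the proof of Lemma~\ref{upper}.
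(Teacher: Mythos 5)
Your construction is essentially the paper's own: the same pairing of $N_{FQ_n}(u)$ into $\lceil\frac{n+1}{2}\rceil$ stars centred at the vertices $u^{2i-1,2i}$ together with one star handling $\{u^n,\overline{u}\}$ (resp.\ $\overline{u}$) near $\overline{u^n}$, differing only in that you leave the filler leaves arbitrary and certify a surviving second vertex by counting rather than by exhibiting $u^{1,n}$ explicitly as the paper does. The deferred hand-check for $n=4$, $r\in\{4,5\}$ does go through (the last star, centred at $\overline{u^1}$, is forced to reuse at least one of $u^{1}$ and $u^{3,4}$, so $|S|\le 14<2^4-1$), so the proposal is correct.
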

\begin{proof}Since $\kappa^{s}(FQ_{n};K_{1,r})\leq \kappa(FQ_{n};K_{1,r})$, we only prove
$\kappa(FQ_{n};K_{1,r})\leq \lceil\frac{n+1}{2}\rceil$.
 Let $u=000\cdots 0$ be a vertex in $Q_{n}$. Then $N_{FQ_{n}}(u)=\{\overline{u}\}\cup\{u^{i}|1\leq i\leq n\}$.

\textbf{Case 1}. $n\geq3$ is odd. For  $1\leq i\leq\frac{n-1}{2}$, let $S_{i}=\{u^{2i-1},u^{2i},u^{2i-1,2i}\}\cup \{u^{2i-1,2i,2i+j}|1\leq j\leq r-2\}$ with $r\leq n$ and $S_{i}=\{u^{2i-1},u^{2i},u^{2i-1,2i}\}\cup \{u^{2i-1,2i,2i+j}|1\leq j\leq n-2\}\cup \{\overline{u}^{2i-1,2i}\}$ with $r=n+1$. Let
$S_{\frac{n+1}{2}}=\{u^{n},\overline{u},\overline{u}^{n}\}\cup \{\overline{u}^{n,j}|1\leq j\leq r-2\}$. Noting that $\overline{u}^{n}=\overline{u^{n}}$, we also know that
$S_{i}$ induces a star $K_{1,r}$ with the center $u^{2i-1,2i}$ for $1\leq i\leq\frac{n-1}{2}$ and with the center $\overline{u}^{n}$ for $i=\frac{n+1}{2}$ respectively.
Let $S=\cup_{i=1}^{\frac{n+1}{2}}S_{i}$. Then $N_{FQ_{n}}(u)\subseteq S$, and $u$ is an isolated vertex of $FQ_{n}-S$. We can  see that  vertex ${u}^{1,n}\notin S_{i}$ for each $1\leq i\leq\frac{n+1}{2}$. So the $S_{i}'s$ for $1\leq i\leq\frac{n+1}{2}$ form a $K_{1,r}$-structure cut of $FQ_{n}$.

\textbf{Case 2}. $n\geq4$ is even. For $r\leq n$, let $S_{i}=\{u^{2i-1},u^{2i},u^{2i-1,2i}\}\cup\{u^{2i-1,2i,2i+j}|1\leq j\leq r-2\}$ when $1\leq i\leq\frac{n}{2}$ and $S_{\frac{n+2}{2}}=\{\overline{u}\}\cup\{\overline{u}^{j}|1\leq j\leq r\}$.
For $r=n+1$, let $S_{i}=\{u^{2i-1},u^{2i},u^{2i-1,2i}\}\cup\{u^{2i-1,2i,2i+j}|1\leq j\leq n-2\}\cup\{\overline{u}^{2i-1,2i}\}$ when $1\leq i\leq\frac{n}{2}$ and $S_{\frac{n+2}{2}}=\{\overline{u}, u^{1}, \overline{u}^{1}\}\cup\{\overline{u}^{1,j}|2\leq j\leq n\}$.
 Then $S_{i}$ induces a star $K_{1,r}$ with the center $u^{2i-1,2i}$ for $1\leq i\leq\frac{n}{2}$, and $S_{\frac{n+2}{2}}$ also induces a star $K_{1,r}$ with the center $\overline{u}$ for $r\leq n$ and with the center $\overline{u}^{1}$ for $r=n+1$ respectively. Let $S=\cup_{i=1}^{\frac{n+2}{2}}S_{i}$. We can see that  $u$ is an isolated vertex in $FQ_{n}-S$ and $u^{1,n}$ belongs to $FQ_{n}-S$. So  $S$ forms a $K_{1,r}$-structure cut of $FQ_{n}$.
\end{proof}

\begin{Rem}\rm{For the $K_{1,r}$-structure cut $S_i$'s, $1\leq i\leq\lceil\frac{n+1}{2}\rceil$, in the proof of Lemma 5.7,  any pair of distinct  $S_i$ and $S_j$ are disjoint for $n\geq 6$ and $r\leq n$. A proof is presented here. Recall that $S_{i}=\{u^{2i-1},u^{2i},u^{2i-1,2i}\}\cup\{u^{2i-1,2i,2i+j}|1\leq j\leq r-2\}$ for $1\leq i\leq\lfloor\frac{n}{2}\rfloor$. For $1\leq m<k\leq \lfloor\frac{n}{2}\rfloor$, $\{2m-1,2m\}\cap\{2k-1,2k\}=\emptyset$, and thus $\{2m-1,2m,2m+j_{1}\}\neq\{2k-1,2k,2k+j_{2}\}$ for $1\leq j_{1}, j_{2}\leq r-2$, which implies that $S_{m}\cap S_{k}=\emptyset$. We now consider   $S_{m}$ and $ S_{\lceil\frac{n+1}{2}\rceil}$ for $1\leq m\leq\lfloor\frac{n}{2}\rfloor$. If $n$ is odd, $S_{\frac{n+1}{2}}=\{u^{n},\overline{u},\overline{u}^{n},\overline{u}^{n,j}|
1\leq j\leq r-2\}$, and $1<2m<n$.  Since $\overline{u}^{n,j}$ agrees with  $u$ in exactly 2 positions, and $u^{2m-1,2m,2m+j_{1}}$ agrees with $u$ in exactly $n-3$ positions,  $\overline{u}^{n,j}\not=u^{2m-1,2m,2m+j_{1}}$ for $n>5$. So $S_{m}\cap S_{\lceil\frac{n+1}{2}\rceil}=\emptyset.$
If $n$ is even, $S_{\frac{n}{2}+1}=\{\overline{u},\overline{u}^{j}|1\leq j\leq r\}$, and   $S_{m}\cap S_{\frac{n}{2}+1}=\emptyset$  for $n\geq 6$.

For $r=n+1$ and $n\ge 6$, we have a unique pair of intersecting $K_{1,r}$-stars in the $S_i$'s, that is,  $S_{1}\cap S_{\frac{n+2}{2}}=\{u^{1}, \overline{u}^{1,2}\}$.  For $3\leq n\leq 5$, however, there are many pairs of intersecting $K_{1,r}$-stars in the $S_i$'s.}
\end{Rem}

In order to describe our main result about $n$-dimensional folded hypercube $FQ_n$, we define the  function $g(r)$ as follows.
\begin{numcases}{g(r)=}
g_1(r)={\rm max}\{6,\frac{r+5}{2},\frac{r^{2}+4r-5}{8}\},  &\text{if $r\geq3$ is odd}, \\
g_2(r)={\rm max}\{6,\frac{r^{2}+2r-8}{8},\frac{r+6}{2},\frac{r^{2}+6r}{12}\},   &\text{if $r\geq2$ is even}.
\end{numcases}
We find that $g_1(r)$ and $g_2(r)$ are both  increasing functions. Table \ref{tab:tab20} lists the values of $g(r)$ for $2\leq r \leq 20$.
We also have  the following monotonicity and integrality of function $g(r)$.
\begin{table}[h]
\centering
\caption{The values of $g(r)$ for $2\leq r \leq 20$.}\label{tab:tab20}
\begin{tabular}{|r|r|r|r|r|r|r|r|r|r|r|r|r|r|r|r|r|r|r|r|}
\hline
$r$ & $2$ & $3$ & $4$ & $5$ & $6$ & $7$ & $8$ & $9$ & $10$ & $11$ & $12$ & $13$ & $14$ & $15$ & $16$ & $17$ & $18$ & $19$ & $20$ \\
\hline
$g(r)$ & $6$ & $6$ & $6$ & $6$ & $6$ & $9$ & $\frac{28}{3}$ & $14$ & $14$ & $20$ & $20$ & $27$ & $27$ & $35$ & $35$ & $44$ & $44$ & $54$ & $54$ \\
\hline
\end{tabular}
\end{table}

\begin{Lem}
$g(r)$ is an increasing function for $r\geq 2$ and integral except at $r=8$, and for odd $r\geq 9$,
\begin{equation}\label{function2}
g(r)=g(r+1)=\frac{r^{2}+4r-5}{8}.
\end{equation}
\begin{proof}
By Eq. (5.1), we find that
\[g_1(r)=
            \begin{cases}
            6,  &\text{if $3\leq r\leq5$ is odd},\\
      \frac{r^{2}+4r-5}{8},  &\text{if $r\geq7$ is odd};\\
            \end{cases}\]
and by Eq. (5.2),
\[g_2(r)=
            \begin{cases}
            6,  &\text{if $2\leq r\leq6$ is even},\\
            \frac{r^{2}+6r}{12},  &\text{if $r=8$ },\\
      \frac{r^{2}+2r-8}{8},  &\text{if $r\geq10$ is even}.\\
            \end{cases}\]
So, we have
$$
\aligned
g_{1}(r)=\frac{r^{2}+4r-5}{8}, ~~~~\mbox{for odd}~~r\geq7,\\
g_{2}(r)=\frac{r^{2}+2r-8}{8}, ~~~~\mbox{for even}~~r\geq10.
\endaligned
$$
Moreover, if $r_{1}\geq9$ and $r_{2}=r_{1}+1$, then
$$
\aligned
\frac{r_{1}^{2}+4r_{1}-5}{8}=\frac{r_{2}^{2}+2r_{2}-8}{8},
\endaligned
$$
which means that for odd $r\geq9$, Eq. (\ref{function2}) holds. Together with Table \ref{tab:tab20}, we know that $g(r)$ is a
monotonically increasing function for $r\geq2$.

We now only prove that $g(r)$ is integral for $r\geq 9$. Let $r+1=2k\geq10$.  Then $g(r)=g(r+1)=g_2(2k)=\frac{(2k)^{2}+2(2k)-8}{8}=\frac{(k+2)(k-1)}{2}$, which is an integer.
\end{proof}
\end{Lem}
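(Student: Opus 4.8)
The plan is to mirror the argument used for $f(r)$ in Lemma 3.9. The first step is to collapse the maximum in each piece of $g(r)$ to a single quadratic expression on the appropriate range of $r$, using only elementary one‑variable inequalities; the second step is to record an algebraic identity that aligns the odd and even branches; and the third step is to read off monotonicity and integrality from the resulting closed forms together with the tabulated small values.

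More precisely, for the first step I would compare the terms inside the two maxima. For odd $r$, the inequality $\frac{r^{2}+4r-5}{8}\geq\frac{r+5}{2}$ is equivalent to $r^{2}\geq 25$, i.e. $r\geq 5$, and $\frac{r^{2}+4r-5}{8}\geq 6$ is equivalent to $r^{2}+4r-53\geq 0$, i.e. $r\geq 7$; hence $g_{1}(r)=6$ for $r\in\{3,5\}$ and $g_{1}(r)=\frac{r^{2}+4r-5}{8}$ for odd $r\geq 7$. For even $r$, the comparison $\frac{r^{2}+2r-8}{8}\geq\frac{r^{2}+6r}{12}$ reduces to $r^{2}-6r-24\geq 0$, i.e. $r\geq 10$, while $\frac{r^{2}+2r-8}{8}\geq\max\{6,\frac{r+6}{2}\}$ already holds once $r\geq 8$; at $r=8$ one checks directly that $\frac{r^{2}+6r}{12}=\frac{28}{3}$ is the largest of the four terms, and for $r\in\{2,4,6\}$ the value is $6$. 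Thus $g_{2}(r)=6$ for $r\in\{2,4,6\}$, $g_{2}(8)=\frac{28}{3}$, and $g_{2}(r)=\frac{r^{2}+2r-8}{8}$ for even $r\geq 10$. This step carries essentially all the bookkeeping, and I expect it to be the only place requiring care, though it is entirely mechanical.

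For the second step I would record the identity that, for $r_{1}$ odd and $r_{2}=r_{1}+1$,
\[
r_{2}^{2}+2r_{2}-8=(r_{1}+1)^{2}+2(r_{1}+1)-8=r_{1}^{2}+4r_{1}-5,
\]
so that $g_{1}(r_{1})=\frac{r_{1}^{2}+4r_{1}-5}{8}=\frac{r_{2}^{2}+2r_{2}-8}{8}=g_{2}(r_{2})$ whenever $r_{1}\geq 7$; in particular $g(r)=g(r+1)=\frac{r^{2}+4r-5}{8}$ for every odd $r\geq 9$, which is Eq. (\ref{function2}). Combining the closed forms above with the values $g(2)=\dots=g(6)=6$, $g(7)=9$, $g(8)=\frac{28}{3}$ from Table \ref{tab:tab20}, and noting that $\frac{r^{2}+4r-5}{8}$ is strictly increasing for $r\geq 7$ (consecutive odd values differ by $\frac{r+3}{2}>0$), monotonicity of $g$ on all of $r\geq 2$ follows.

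Finally, for integrality, $g(2),\dots,g(7)$ are integers and $g(8)=\frac{28}{3}$ is the only non‑integral value; for $r\geq 9$ I would write $r+1=2k\geq 10$ and compute
\[
g(r)=g(r+1)=g_{2}(2k)=\frac{(2k)^{2}+2(2k)-8}{8}=\frac{k^{2}+k-2}{2}=\frac{(k+2)(k-1)}{2},
\]
which is an integer because $(k+2)(k-1)$ is a product of two integers of opposite parity, hence even. No genuine obstacle arises beyond the case split for the maxima, and that reduces to a handful of quadratic inequalities exactly parallel to the treatment of $f(r)$.
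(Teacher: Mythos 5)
Your proof is correct and follows essentially the same route as the paper: collapse each maximum to a single closed form, use the identity $(r+1)^{2}+2(r+1)-8=r^{2}+4r-5$ to align the odd and even branches, and read off monotonicity and integrality (you simply make explicit the quadratic comparisons the paper leaves implicit). One small caveat: your intermediate claim that $g_{1}(r_{1})=g_{2}(r_{1}+1)$ ``whenever $r_{1}\geq 7$'' overreaches at $r_{1}=7$, since $g_{2}(8)=\tfrac{28}{3}\neq 9=g_{1}(7)$; this does not affect the lemma because you correctly restrict the conclusion to odd $r\geq 9$.
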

\begin{Lem}\label{G1}
For all integers $r\geq2$, $g(r)\geq r$.
\end{Lem}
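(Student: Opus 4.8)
The plan is to mirror the proof of Lemma~\ref{F1} (the analogous statement $r<f(r)$ for hypercubes), splitting on the parity of $r$ and using the explicit closed forms for $g_1$ and $g_2$ that were extracted inside the proof of the previous lemma. For odd $r$, recall $g_1(r)=6$ for $3\le r\le 5$ and $g_1(r)=\frac{r^2+4r-5}{8}$ for $r\ge 7$; for even $r$, recall $g_2(r)=6$ for $2\le r\le 6$, $g_2(8)=\frac{r^2+6r}{12}$ at $r=8$, and $g_2(r)=\frac{r^2+2r-8}{8}$ for $r\ge 10$. So the statement $g(r)\ge r$ reduces to a handful of elementary inequalities, one per regime.

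First I would dispatch the small cases by inspection: for odd $3\le r\le 5$ and even $2\le r\le 6$ we have $g(r)=6\ge r$ directly, and $g_2(8)=\frac{64+48}{12}=\frac{112}{12}=\frac{28}{3}>8$. Then for the generic odd range $r\ge 7$, I would compute
\[
g_1(r)-r=\frac{r^2+4r-5}{8}-r=\frac{r^2-4r-5}{8}=\frac{(r-5)(r+1)}{8}\ge 0\quad\text{for }r\ge 5,
\]
which in particular is nonnegative (indeed positive) for all odd $r\ge 7$. For the generic even range $r\ge 10$,
\[
g_2(r)-r=\frac{r^2+2r-8}{8}-r=\frac{r^2-6r-8}{8},
\]
and $r^2-6r-8\ge 0$ once $r\ge 3+\sqrt{17}\approx 7.12$, hence for all even $r\ge 10$. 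Combining the regimes gives $g(r)\ge r$ for every integer $r\ge 2$, with equality possible only in the small-case band where $g(r)=6$ (e.g.\ $r=6$).

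I do not anticipate a genuine obstacle here — the lemma is a routine monotonicity/size check exactly parallel to Lemma~\ref{F1}. The only points requiring a little care are (i) remembering to handle $r=8$ separately because $g$ is not given by the quadratic there, and (ii) making sure the factorizations $(r-5)(r+1)$ and the threshold $r^2-6r-8\ge0$ are checked against the actual starting values of each regime ($r=7$ odd and $r=10$ even respectively) rather than the boundary roots of the quadratics. Both are immediate. One could alternatively avoid case analysis entirely by invoking the previously established inequality $r<f(r)$ together with the elementary fact that $g(r)\ge f(r)-1$ (visible by comparing Eqs.~(3.1)--(3.2) with Eqs.~(5.1)--(5.2) term by term), but the direct computation above is cleaner to present and self-contained.
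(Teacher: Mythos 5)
Your proof is correct and follows essentially the same route as the paper's: the same parity split, the same closed forms for $g_1$ and $g_2$ in each regime, and the same factorization $\frac{1}{8}(r+1)(r-5)$ for odd $r\geq 7$ and threshold check of $r^{2}-6r-8$ for even $r\geq 10$, with $r=8$ and the constant-$6$ ranges handled by inspection. No issues.
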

\begin{proof}
We have
$$
\aligned
g_1(r)-r=6-r>0, \mbox{ for } 3\leq r\leq5;
\endaligned
$$
$$
\aligned
g_1(r)-r=\frac{r^{2}+4r-5}{8}-r=\frac{1}{8}(r+1)(r-5)>0, \mbox{ for } r\geq7.
\endaligned
$$
Therefore, if $r\geq3$ is odd , then $g_1(r)-r>0$.

On the other hand,
$$
\aligned
g_2(r)-r=6-r\geq0, \mbox{ for } 2\leq r\leq6;
\endaligned
$$
$$
\aligned
g_2(8)-8=\frac{8^{2}+48}{12}-8=\frac{4}{3}>0;
\endaligned
$$
$$
\aligned
g_2(r)-r=\frac{r^{2}+2r-8}{8}-r=\frac{1}{8}(r^{2}-6r-8)>0, \mbox{ for } r\geq10.
\endaligned
$$
So, if $r\geq2$ is even, then $g_2(r)-r\geq0$.
\end{proof}
\begin{Lem}\label{FL}
If integers $r\geq2$ and $n>g(r)$, then we have $\kappa(FQ_{n};K_{1,r})\geq \lceil\frac{n+1}{2}\rceil$ and
$\kappa^{s}(FQ_{n};K_{1,r})\geq \lceil\frac{n+1}{2}\rceil$.
\begin{proof}Since $\kappa^{s}(FQ_{n};K_{1,r})\leq\kappa(FQ_{n};K_{1,r})$, it suffices to show that
$\kappa^{s}(FQ_{n};K_{1,r})\geq \lceil\frac{n+1}{2}\rceil$.
Suppose to the contrary that $\kappa^{s}(FQ_{n};K_{1,r})$$<\lceil\frac{n+1}{2}\rceil$. Then there are  a set $F$ of subgraphs of $FQ_{n}$ that each is a star of at most $r$ leaves so that
$|F|\leq\lceil\frac{n+1}{2}\rceil-1$ and $FQ_{n}-F$ is disconnected. Let $C$ be a smallest component of $FQ_{n}-F$ and $k:=|V(C)|$. We consider following three cases.

\textbf{Case $1$}. $k=1$.

 Let $C=\{u\}$.  By Lemma 5.3, $|N_{FQ_{n}}(u)\cap V(K_{1,r'})|\leq 2$ for each member $K_{1,r'}$ in $F$, $0\leq r'\leq r$. Thus
\begin{eqnarray}\nonumber
n+1=|N_{FQ_{n}}(u)|&\leq&\sum_{K\in F}|N_{FQ_{n}}(u)\cap V(K)|\leq 2|F|\\
&\leq& 2(\lceil\frac{n+1}{2}\rceil-1)\leq2(\frac{n+2}{2}-1)
=n, \nonumber
\end{eqnarray}
a contradiction.

\textbf{Case $2$}. $2\leq k \leq \frac{r}{2} +1$.

Since $n>g(r)\geq\frac{r+5}{2}$, $2\leq k\leq\frac{r}{2}+1\leq n-2$. For $n\geq7$, by Lemma 5.5, we have $|N_{FQ_{n}}(C)|\geq (n+1)k-2(k-1)-\tbinom{k-1}{2}$. By Lemma 5.4, $|N_{FQ_{n}}(C)\cap V(K_{1,r'})|\leq 2(k-1)$ for each member $K_{1,r'}$ in $F$, $0\leq r' \leq r$. We have
$$
\aligned
(n+1)k-2(k-1)-\tbinom{k-1}{2}&\leq |N_{FQ_{n}}(C)|\leq 2(k-1)|F|
\leq2(k-1)(\lceil\frac{n+1}{2}\rceil-1)\\
&\leq(k-1)n,
\endaligned
$$
which implies that $n\leq \frac{(k-2)(k+1)}{2}$.
If $r$ is even, then $n\leq \frac{r^{2}+2r-8}{8}$, contradicting $n>{\rm max}\{6,\frac{r^{2}+2r-8}{8}, \frac{r+6}{2},\frac{r^{2}+6r}{12}\}=g(r)$. If $r$ is odd, then $n\leq \frac{r^{2}-9}{8}<\frac{r^{2}+4r-5}{8}\leq g(r)$, a contradiction.

\textbf{Case $3$}. $k\geq \frac{r+1}{2}+1$.

If $r$ is even, then
$ k\geq \frac{r}{2} +2$. Since $n>g(r)\geq{\rm max}\{
6,\frac{r+6}{2}\}$, $2\leq\frac{r}{2}+1\leq n-3$, by Lemma 5.6 we have
$$
\aligned
\kappa_{\frac{r}{2}+1}(FQ_{n})&=(\frac{r}{2}+2)(n+1)- 2(\frac{r}{2}+1)-\tbinom{\frac{r}{2}+1}{2}\\
&=\frac{-r^{2}}{8}+\frac{rn}{2}+2n-\frac{3r}{4}.
\endaligned
$$

 We also have that
\begin{equation}\label{FF}
|V(F)|\leq(1+r)|F|\leq (1+r)(\lceil\frac{n+1}{2}\rceil-1)\leq
\frac{1}{2}(r+1)n.
\end{equation}

 Since $FQ_{n}-F$ is disconnected, and $C$ is a smallest component of $FQ_{n}-F$ and $ |C|=k\geq \frac{r}{2} +2$, we have  $|V(F)|\geq\kappa_{\frac{r}{2}+1}(FQ_{n})$, so
$$
\aligned
\frac{1}{2}(r+1)n\geq\frac{-r^{2}}{8}+\frac{rn}{2}+2n-\frac{3r}{4},
\endaligned
$$

 \noindent which implies that $n\leq\frac{r^{2}+6r}{12}$, contradicting $n>{\rm max}\{6,\frac{r^{2}+2r-8}{8},
\frac{r+6}{2},\frac{r^{2}+6r}{12}\}=g(r)$.

If $r$ is odd and $n>g(r)\geq{\rm max}\{
6,\frac{r+5}{2}\}$, then $2\leq\frac{r+1}{2}\leq n-3$. By Lemma 5.6 we have that
\begin{eqnarray}
\kappa_{\frac{r+1}{2}}(FQ_{n})&=&(\frac{r+1}{2}+1)(n+1)-2 (\frac{r+1}{2})-\tbinom{\frac{r+1}{2}}{2}\nonumber\\
 &=&\frac{-r^{2}}{8}+\frac{(r+3)n}{2}-\frac{r}{2}+\frac{5}{8}\label{Fk}.
\end{eqnarray}
\noindent Since $FQ_{n}-F$ is disconnected, and $C$ is a smallest component of  $FQ_{n}-F$ and $ |C|=k\geq \frac{r+1}{2} +1$, we have that $|V(F)|\geq \kappa_{\frac{r+1}{2}}(FQ_{n}).$ From Ineq. (\ref{FF}) and Eq. (\ref{Fk}) we also have
$$
\aligned
\frac{1}{2}(r+1)n\geq |V(F)|\geq\frac{-r^{2}}{8}+\frac{(r+3)n}{2}
-\frac{r}{2}+\frac{5}{8},
\endaligned
$$
which implies that $n\leq\frac{r^{2}+4r-5}{8}$, contradicting $n>{\rm max}\{
6,\frac{r+5}{2},\frac{r^{2}+4r-5}{8}\}=g(r)$.
\end{proof}
\end{Lem}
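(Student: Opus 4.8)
The plan is to prove the lower bound by a contradiction argument running parallel to the hypercube case treated in Lemma \ref{lower}, organised into three cases according to the order of a smallest component. Since $\kappa^{s}(FQ_{n};K_{1,r})\leq\kappa(FQ_{n};K_{1,r})$, it is enough to establish the substructure bound $\kappa^{s}(FQ_{n};K_{1,r})\geq\lceil\frac{n+1}{2}\rceil$, because this immediately forces the structure bound as well. So I would assume for contradiction that $FQ_{n}$ admits a set $F$ of stars, each with at most $r$ leaves, satisfying $|F|\leq\lceil\frac{n+1}{2}\rceil-1$ and with $FQ_{n}-F$ disconnected. The first routine step is to record the uniform vertex estimate $|V(F)|\leq(1+r)|F|\leq\frac{1}{2}(r+1)n$, which is the upper bound I will repeatedly compare against the various connectivity lower bounds. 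I then fix a smallest component $C$ of $FQ_{n}-F$, put $k=|V(C)|$, and split into the regimes $k=1$, $2\leq k\leq\frac{r}{2}+1$, and $k\geq\frac{r+1}{2}+1$.

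For $k=1$, writing $C=\{u\}$, Lemma 5.3 says each member of $F$ meets at most two neighbours of $u$, so $n+1=|N_{FQ_{n}}(u)|\leq 2|F|\leq 2(\lceil\frac{n+1}{2}\rceil-1)\leq n$, an immediate contradiction. In the middle range $2\leq k\leq\frac{r}{2}+1$, I would play the neighbourhood lower bound of Lemma 5.5, namely $|N_{FQ_{n}}(C)|\geq(n+1)k-2(k-1)-\binom{k-1}{2}$, against the per-star covering bound $|N_{FQ_{n}}(C)\cap V(K_{1,r'})|\leq 2(k-1)$ from Lemma 5.4; combining these with $|F|\leq\frac{n}{2}$ collapses to $n\leq\frac{(k-2)(k+1)}{2}$, and inserting the largest admissible value of $k$ (that is, $\frac{r}{2}+1$ for even $r$ and $\frac{r+1}{2}$ for odd $r$) yields $n\leq\frac{r^{2}+2r-8}{8}$, respectively $n\leq\frac{r^{2}-9}{8}$, each contradicting $n>g(r)$. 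In the large range $k\geq\frac{r+1}{2}+1$ the component is big enough that $F$ is a genuine $g$-extra cut, so I would instead invoke the exact formula $\kappa_{g}(FQ_{n})=(g+1)(n+1)-2g-\binom{g}{2}$ of Lemma 5.6 with $g=\frac{r}{2}+1$ for even $r$ and $g=\frac{r+1}{2}$ for odd $r$; since $|V(F)|\geq\kappa_{g}(FQ_{n})$ while $|V(F)|\leq\frac{1}{2}(r+1)n$, comparing the two forces $n\leq\frac{r^{2}+6r}{12}$, respectively $n\leq\frac{r^{2}+4r-5}{8}$, once more contradicting $n>g(r)$.

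The delicate point, and the reason $g(r)$ is defined as a maximum of several linear and quadratic expressions, is to guarantee that each auxiliary lemma is applied strictly inside its stated range and that every case actually closes. Concretely, I must verify that the component size satisfies $2\leq k\leq n-2$ before using Lemma 5.5 and that the extra parameter obeys $0\leq g\leq n-3$ before using Lemma 5.6; these are exactly the constraints enforced by the terms $\frac{r+5}{2}$, $\frac{r+6}{2}$, and the constant $6$ inside $g(r)$. I expect the real work to reside in checking these range conditions and in the parity-dependent algebra that turns each derived inequality into precisely one of the bounds entering the definition of $g(r)$; once those thresholds are matched the three contradictions are forced, and together with the upper estimate of Lemma \ref{FU} this yields the asserted lower bound.
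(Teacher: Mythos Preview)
Your proposal is correct and follows essentially the same approach as the paper: the same reduction to the substructure bound, the same three-case split on $k=|V(C)|$, and the same pairings of Lemmas 5.3, 5.4/5.5, and 5.6 against $|F|\leq\lceil\frac{n+1}{2}\rceil-1$ and $|V(F)|\leq\frac{1}{2}(r+1)n$, with the parity-dependent algebra matched to the defining terms of $g(r)$. Your attention to the applicability ranges of Lemmas 5.5 and 5.6 is exactly the point the paper handles via the linear terms and the constant $6$ in $g(r)$.
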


From Lemma \ref{G1} we know  that the condition of Lemma \ref{FL} implies that of Lemma \ref{FU}, and thus have  the following main result of this section.
\begin{The}\label{FQ}
If $r\geq2$ and $n>g(r)$, then $\kappa(FQ_{n};K_{1,r})=\kappa^{s}(FQ_{n};K_{1,r})=\lceil\frac{n+1}{2}\rceil$
\end{The}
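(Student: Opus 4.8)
The plan is to obtain Theorem~\ref{FQ} by combining the upper bound of Lemma~\ref{FU} with the matching lower bound of Lemma~\ref{FL}, once I verify that the single hypothesis $n>g(r)$ already supplies every side condition both lemmas require. First I would apply Lemma~\ref{G1}, which gives $g(r)\geq r$; together with $n>g(r)$ this forces $n>r$, so in particular $n+1\geq r$ (indeed $n+1\geq r+2$). Reading off the definition of $g$, every branch of the maximum contains the term $6$, so $g(r)\geq 6$ and hence $n\geq 7$; in particular $n\geq 3$ and $n\geq 5$, which are the running hypotheses appearing in Lemmas~5.3--5.6 that are invoked inside the proof of Lemma~\ref{FL}.

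With these observations in hand the argument is short. Since $n+1\geq r\geq 2$ and $n\geq 3$, Lemma~\ref{FU} gives $\kappa(FQ_{n};K_{1,r})\leq\lceil\frac{n+1}{2}\rceil$ and $\kappa^{s}(FQ_{n};K_{1,r})\leq\lceil\frac{n+1}{2}\rceil$. On the other hand, since $r\geq 2$ and $n>g(r)$, Lemma~\ref{FL} gives $\kappa(FQ_{n};K_{1,r})\geq\lceil\frac{n+1}{2}\rceil$ and $\kappa^{s}(FQ_{n};K_{1,r})\geq\lceil\frac{n+1}{2}\rceil$. Sandwiching these, and recalling $\kappa^{s}(FQ_{n};K_{1,r})\leq\kappa(FQ_{n};K_{1,r})$, yields $\kappa(FQ_{n};K_{1,r})=\kappa^{s}(FQ_{n};K_{1,r})=\lceil\frac{n+1}{2}\rceil$, as claimed.

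The genuine work has of course already been done in the two cited lemmas rather than in this assembly step. The main obstacle is Lemma~\ref{FL}: there one must case-split on the order $k$ of a smallest component $C$ of $FQ_{n}-F$, bound $|N_{FQ_{n}}(C)\cap V(K_{1,r'})|$ by $2(k-1)$ for each star $K_{1,r'}\in F$ via Lemmas~5.3--5.4, and play this neighbourhood bound against the $g$-extra connectivity estimates $\kappa_{g}(FQ_{n})$ of Lemma~5.6; the thresholds appearing in the several branches of the maximum defining $g(r)$ are precisely the values of $n$ at which these competing inequalities flip, which is why $g$ is defined as that maximum. If I were proving everything from scratch I would expect the bookkeeping in Case~3 (large components, even versus odd $r$) to be the most delicate part; for the present theorem, however, nothing beyond the clean combination above is needed.
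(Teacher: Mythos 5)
Your proposal is correct and follows the paper's own route exactly: the paper likewise derives Theorem~\ref{FQ} by noting (via Lemma~\ref{G1}, i.e.\ $g(r)\geq r$, together with $g(r)\geq 6$) that the hypothesis $n>g(r)$ subsumes the conditions of the upper-bound Lemma~\ref{FU}, and then combining it with the lower-bound Lemma~\ref{FL}. Your extra verification of the side conditions $n\geq 7$ and $n+1\geq r+2$ is a welcome bit of explicitness but introduces nothing different in substance.
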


\section{Conclusion }
For $n$-dimensional hypercubes $Q_{n}$ and folded hypercubes $FQ_{n}$, in this paper we have showed that for all integers $r\geq 2$ and $n>f(r)$, $\kappa(Q_{n};K_{1,r})=\kappa^{s}(Q_{n}; K_{1,r})=\lceil\frac{n}{2}\rceil$, and for all integers $r\geq2$ and $n>g(r)$, $\kappa(FQ_{n};K_{1,r})=$ $\kappa^{s}(FQ_{n};$ $K_{1,r})$$=\lceil\frac{n+1}{2}\rceil$; see  Theorems \ref{hypercube} and \ref{FQ}. In particular, both functions $f(r)$ and $g(r)$ have simple expressions: $f(r)=f(r+1)=\frac{r^{2}+4r+3}{8} $ and $g(r)=g(r+1)=\frac{r^{2}+4r-5}{8}$ for odd $r\geq 9$. But for $2\leq r\leq 8$, $f(r)$ and $g(r)$ are piecewise functions with $5\leq f(r)\leq \frac{31}{3}$ and $6\leq g(r)\leq \frac{28}{3}$. Especially, for low dimensional hypercubes $Q_n$, we also obtain for all integers $4\leq r\leq6$, $\kappa(Q_{n};K_{1,r})=\kappa^{s}(Q_{n}; K_{1,r})=\lceil\frac{n}{2}\rceil$ where $n\geq r$.  For $2\leq r\leq3$, Lin et al. \cite {4} has determined  $\kappa(Q_{n},K_{1,r})$ and $\kappa^{s}(Q_{n},K_{1,r})$.
Our results solved partly  the open problem of determining $K_{1,r}$-structure connectivity of  $Q_n$ and $FQ_n$ for general $r$. Setting $r=2,3$ in Theorem \ref{FQ}, we obtain the results given by Sabir et al. in {\rm\cite {9}}.
But for the cases that $ 7\leq r\leq n\leq f(r)$ and $1\leq r-1\leq n\leq g(r)(r\geq 2;n\geq 3)$, the open problem has not been solved yet.

From the above facts obtained already we can propose the following general conjectures:
\begin{Con}For any integers $n\geq r\geq 2$ and $n\geq3$, $\kappa(Q_{n};K_{1,r})=\kappa^{s}(Q_{n}; K_{1,r})=\lceil\frac{n}{2}\rceil$.
\end{Con}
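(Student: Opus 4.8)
Since Lemma~\ref{upper} already gives $\kappa(Q_n;K_{1,r})\le\lceil n/2\rceil$ for every $n\ge r\ge 2$ with $n\ge 3$, and $\kappa^s\le\kappa$ always, the conjecture reduces to the matching lower bound $\kappa^s(Q_n;K_{1,r})\ge\lceil n/2\rceil$. Theorem~\ref{hypercube} gives this for $n>f(r)$, \cite{4} gives it for $r\le 3$, and Section~4 gives it for $4\le r\le 6$ with $n\in\{r,r+1\}$; the remaining window is $r\ge 7$ and $r\le n\le f(r)$. The plan is to extend the inductive mechanism of Section~4 to all $r$: fix $r\ge7$ and induct on $n$, with $n=r$ as base case and the range $n\ge\lfloor f(r)\rfloor+1$ imported from Theorem~\ref{hypercube}, running simultaneously a secondary induction on $r$ (anchored at $r=6$) so that whenever a star of a putative cut meets a half-cube in a strictly smaller star we may quote $\kappa^s(Q_{n-1};K_{1,r'})\ge\lceil(n-1)/2\rceil$ for $r'<r$.

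So suppose $F$ is a star-substructure-cut of $Q_n$ with $|F|\le\lceil n/2\rceil-1$, and let $C$ be a smallest component of $Q_n-F$. I would first run the component-size trichotomy of Lemma~\ref{lower}: $|V(C)|=1$ is killed by Lemma~3.2 and a degree count; large $|V(C)|$ is killed by feeding $|V(F)|\le(r+1)(\lceil n/2\rceil-1)$ against the $g$-extra connectivity of $Q_n$ from Lemma~3.6. In the present window $n\le f(r)$, however, that inequality no longer beats the ``small~$g$'' value of $\kappa_g(Q_n)$, so the large-component case must instead invoke the ``$\kappa_g(Q_n)=n(n-1)/2$'' branch and/or the split $Q_n=Q_n^0\otimes Q_n^1$, bounding how much of $C$ lies in each half exactly as in the proofs of Lemmas~4.4 and~4.7. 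The resistant regime is the middle one, $2\le|V(C)|\le\tfrac r2+1$: here Lemma~3.5 together with Lemma~\ref{connection} yields only $n\le\binom{|V(C)|}{2}$, which is not a contradiction once $n$ may be as small as $r$ --- and worse, both of these inequalities become simultaneously tight precisely when $C$ is a star, so no sharpening of the constants alone will close the gap.

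To attack the middle regime I would pass to $Q_n=Q_n^0\otimes Q_n^1$ along a carefully chosen dimension $d$ and track the trace of each member: deleting coordinate $d$ sends a star $K_{1,r'}$ either to a copy of itself inside one half (if $d$ is not a leaf-direction of its centre) or to a $K_{1,r'-1}$ in one half plus an isolated vertex in the other; in particular, when $n=r$ every full $K_{1,r}$ of $F$ is forced to straddle $d$, which removes one sub-case entirely. Since a member straddles $d$ only if $d$ is among its $\le r$ leaf-directions, the total number of (member, leaf-direction) incidences is at most $r|F|$, so whenever $n>r$ a pigeonhole choice of $d$ makes fewer than $|F|$ members straddle; combined with the inductive hypotheses (secondary on $r$ for the $K_{1,r'-1}$ pieces, primary on $n$ for the untouched ones) this should force each $Q_n^i-(F\cap Q_n^i)$ to be connected unless some $K_{1,r}$ of $F$ lies wholly inside that half --- a situation occurring for at most $|F|$ of the two halves, which is then handled by analysing $C$ directly through its trace $C^i=C\cap Q_n^i$, bounding $|V(C^i)|=|N_{Q_n^{1-i}}(C^i)|\le|V(F\cap Q_n^{1-i})|$ and re-running the Lemma~3.2--3.6 estimates in the smaller cube, as in Case~2 of Lemma~4.4. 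Once both half-cubes minus their traces are connected it remains only to produce one surviving matching edge between them, which exists because $|V(Q_n^i-(F\cap Q_n^i))|\ge 2^{n-1}-(r+1)(\lceil n/2\rceil-1)$ dwarfs $|V(F\cap Q_n^{1-i})|$ throughout the target window.

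The main obstacle I anticipate is that the pigeonhole step is delicately balanced when $r$ is close to $n$: for odd $n$ the connectivity threshold of $Q_{n-1}$ is exactly $\lceil(n-1)/2\rceil=(n-1)/2$, which is also the worst-case size of $F\cap Q_n^i$, so connectivity of the half-cubes cannot be had for free and the direct ``trapped-component'' analysis must be carried through uniformly over all of $r\le n\le f(r)$ and all $r\ge 7$, rather than case-by-case as Section~4 can afford for $|F|\le 3$. A second, deeper difficulty is ruling out the near-tight configurations flagged in the middle regime --- a star component whose entire neighbourhood is optimally covered by a few large stars --- which seems to require a genuinely global argument, most plausibly by showing such a covering would force two vertices to share three common neighbours, contradicting Lemma~3.1, in the spirit of the proof of Lemma~\ref{connection}; making that argument work uniformly in $r$ is, I expect, where the real content of the conjecture lies.
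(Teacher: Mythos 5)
This statement is not proved anywhere in the paper: it appears only as a conjecture in the Conclusion, and the authors explicitly record that the cases $7\le r\le n\le f(r)$ remain open. So the real question is whether your proposal closes that gap on its own, and it does not. What you have written is an accurate map of the terrain: the upper bound from Lemma~\ref{upper}, the reduction to the lower bound on the window $r\ge 7$, $r\le n\le f(r)$, and the correct diagnosis that Case~2 of Lemma~\ref{lower} (components of size $2\le k\le \frac r2+1$) yields only $n\le\binom{k}{2}\le\frac{r^2+2r}{8}$, which is vacuous precisely when $n\le f(r)$. But the steps that would constitute a proof are left as aspirations.

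Concretely: (i) your primary induction on $n$ takes $n=r$ as its base case, yet you offer no argument for $n=r$ beyond the remark that every full $K_{1,r}$ must straddle the splitting dimension, so the induction has no floor; (ii) the pigeonhole step only guarantees fewer than $|F|$ straddling members, and since a straddling star leaves a nonempty trace (an isolated vertex, still a legitimate substructure of $K_{1,r}$) in \emph{both} halves, each half-cube can still receive $|F|=\lceil n/2\rceil-1$ members, which for odd $n$ equals $\lceil(n-1)/2\rceil$ and therefore does not force $Q_n^i$ minus its trace to be connected --- you flag this yourself but do not resolve it; (iii) the decisive obstruction, a small component $C$ whose neighbourhood is covered with equality $|N_{Q_n}(C)\cap V(K_{1,r'})|=2(|V(C)|-1)$ by every member of $F$, is exactly the configuration in which Lemmas~3.4 and~3.5 are simultaneously tight, and you explicitly defer the argument that would exclude it to ``where the real content of the conjecture lies.'' Identifying where the content lies is not the same as supplying it: as written this is a plausible research programme for the open problem, consistent with the paper's own assessment that these cases are unresolved, not a proof of the statement.
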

\begin{Con} For any integers $n+1\geq r\geq 2$ and $n\geq 3$,
 $\kappa(FQ_{n};K_{1,r})=\kappa^{s}(FQ_{n}; K_{1,r})=\lceil\frac{n+1}{2}\rceil$.
\end{Con}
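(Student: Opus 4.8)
The upper bound $\kappa(FQ_{n};K_{1,r})\le\lceil\frac{n+1}{2}\rceil$ is already available for the \emph{entire} range $n+1\ge r\ge2$, $n\ge3$ from Lemma~\ref{FU}, and since $\kappa^{s}\le\kappa$ the same bound holds for the substructure parameter; so the whole task reduces to establishing the matching lower bound $\kappa^{s}(FQ_{n};K_{1,r})\ge\lceil\frac{n+1}{2}\rceil$. Theorem~\ref{FQ} already delivers this lower bound whenever $n>g(r)$, so the plan is to close the remaining window $\max\{r-1,3\}\le n\le g(r)$ by induction on the dimension $n$. The inductive engine is the coordinate decomposition: fixing a direction $i$ splits $FQ_{n}$ into two halves $H_{0},H_{1}$, each inducing a copy of $Q_{n-1}$, joined by \emph{two} perfect matchings, namely the dimension-$i$ matching and the complementary matching $u\mapsto\overline{u}$. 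Thus every vertex has exactly two neighbors across the cut, a richer interconnection than in $Q_{n}$, and this is what I would exploit to force reconnection.

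For the inductive step I would suppose for contradiction that $F$ is a substructure-cut with $|F|\le\lceil\frac{n+1}{2}\rceil-1$ and $FQ_{n}-F$ disconnected. Each fault star meets $H_{0}$ and $H_{1}$ in substars (the half holding the center in a sub-star, the other half in an independent set of leaves), so the induced faults in each half are again legitimate $K_{1,r}$-substructures. Choosing the split direction carefully, and exploiting that a star severed by the cut contributes only a smaller sub-star to each side, one wants the effective number of faults charged to a half to drop below $\kappa^{s}(Q_{n-1};K_{1,r})=\lceil\frac{n-1}{2}\rceil$, which by the induction hypothesis keeps $H_{0}-F^{0}$ and $H_{1}-F^{1}$ each connected; the $2\cdot2^{n-1}$ cross-edges, minus the few incident to $V(F)$, then reconnect the two halves and contradict disconnection. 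The only escape is a component $C$ buried inside one half with all external neighbors in $V(F)$, which I would rule out by bounding $|N_{FQ_{n}}(C)|$ from below by the neighbor-expansion estimate and from above by $2(k-1)|F|$ via Lemma~\ref{connection}, exactly as in the proofs of Lemma~\ref{FL} and Lemmas~4.1--4.7. Because each half is $Q_{n-1}$ (not $FQ_{n-1}$), this step consumes the $Q_{n-1}$ statement, Conjecture~6.1, precisely in the overlapping window $n-1\le g(r)-1<f(r)$ where Theorem~\ref{hypercube} does not yet apply; the clean way to break the circularity is therefore a single joint induction on $n$ establishing Conjecture~6.1 for $Q_{n}$ and Conjecture~6.2 for $FQ_{n}$ at each level, both anchored on the already-proved $Q_{n-1}$ facts, with base cases $r\le3$ from Sabir et al.~\cite{9} together with the small dimensions handled explicitly in the manner of Section~4.

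The principal obstacle is that the analytic tools underpinning the neighbor bounds degrade exactly in the regime we must enter. The two-common-neighbors property (Lemma~5.1) is stated for $n\ge4$ and genuinely fails at $n=3$, where $N_{FQ_{3}}(011)\cap N_{FQ_{3}}(110)$ has four elements; the neighbor-expansion bound (Lemma~5.5) requires $n\ge5$; and the $g$-extra connectivity formula for $FQ_{n}$ (Lemma~5.6) is known only for $n\ge7$. Since $g(r)\le6$ for small $r$ forces the residual window to include $3\le n\le6$, while for large $r$ the window $r-1\le n\le g(r)\sim r^{2}/8$ is wide, one cannot simply quote these lemmas: bespoke low-dimensional neighborhood estimates must be proved by hand, paralleling the explicit casework of Lemmas~4.1--4.7, and the degenerate behaviour at $n=3$ treated separately. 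A further delicate point is the boundary case $r=n+1$, in which a fault star may use the complementary edge $u\overline{u}$, so that a single star straddles both halves through the complementary matching; the bookkeeping of how many full stars land in each half, on which the whole reconnection argument rests, must then track this extra edge throughout.
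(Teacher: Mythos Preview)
The statement you are attempting to prove is Conjecture~6.2 in the paper, not a theorem: the paper explicitly leaves it open. In the conclusion the authors write that ``for the cases that $7\leq r\leq n\leq f(r)$ and $1\leq r-1\leq n\leq g(r)$ ($r\geq 2$; $n\geq 3$), the open problem has not been solved yet,'' and then \emph{propose} Conjectures~6.1 and~6.2. There is therefore no ``paper's own proof'' to compare your attempt against.

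What you have written is not a proof but a research outline, and you candidly flag its own gaps. The most serious one is structural: your inductive step for $FQ_{n}$ appeals to $\kappa^{s}(Q_{n-1};K_{1,r})=\lceil\frac{n-1}{2}\rceil$ inside the window $r\le n-1\le f(r)$, which is precisely Conjecture~6.1 in its own open range. Calling this a ``joint induction'' does not dissolve the dependence; it merely restates that Conjecture~6.2 would follow from Conjecture~6.1, while Conjecture~6.1 itself remains unproved for $7\le r\le n\le f(r)$. The Section~4 arguments you invoke as a template cover only $4\le r\le 6$ and already required substantial ad hoc casework; extrapolating that to all $r$ up to roughly $r^{2}/8$ dimensions is exactly the content of the open problem. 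The further obstacles you list---failure of Lemma~5.1 at $n=3$, the $n\ge5$ hypothesis in Lemma~5.5, the $n\ge7$ hypothesis in Lemma~5.6, and the bookkeeping for $r=n+1$---are real and none of them is resolved in your proposal.

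In short, the upper bound via Lemma~\ref{FU} and the lower bound for $n>g(r)$ via Theorem~\ref{FQ} are indeed in the paper, but the remaining window is genuinely open, and your sketch does not close it.
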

\section*{Acknowledgement}
The author thanks the referees for many helpful suggestions and comments. This work is supported by NSFC (Grant No. 11871256).

\end{document}